\newcommand{\abs}[1]{\left\lvert #1 \right\rvert}
\newcommand{\Int}{\mathrm{Int}~}
\newcommand{\norm}[1]{\left\lVert #1 \right\rVert}
\newcommand{\setof}[2]{\left\{ #1 ~ \middle\arrowvert ~ #2 \right\}}
\newcommand{\prodscal}[2]{\left\langle #1 ~ \middle\arrowvert ~ #2 \right\rangle}
\newcommand{\obstacle}[3]{
  \draw[color=black, fill=white] [domain={-rad(acos(#1-1))}:{rad(acos(#1-1))}, samples=100, smooth, variable=\x] ({-rad(acos(#1-1)) + #2}, {#3}) -- plot({\x + #2}, {rad(acos(#1-cos(\x r))) + #3}) --  plot({-\x + #2}, {-rad(acos(#1-cos(\x r))) + #3}) -- ({-rad(acos(#1-1)) + #2}, {#3});}
\def\url@leostyle{
 \@ifundefined{selectfont}{\def\UrlFont{\sf}}{\def\UrlFont{\small\ttfamily}}}
\newtheorem{prop}{Proposition}
\newtheorem{thm}[prop]{Theorem}
\newtheorem{lemme}[prop]{Lemma}
\newtheorem{fact}[prop]{Fact}
\theoremstyle{remark}
\newtheorem*{remarque}{Remark}
\newtheorem*{question}{Question}
\theoremstyle{definition}
\newtheorem{definition}[prop]{Definition}
\tikzset{
    scale plot marks/.is choice,
    scale plot marks/false/.code={
        \def\pgfuseplotmark##1{\pgftransformresetnontranslations\csname pgf@plot@mark@##1\endcsname}
    },
    scale plot marks/true/.style={},
    scale plot marks/.default=true
}
\tikzset{fleche/.style args={#1:#2}{ postaction = decorate,decoration={name=markings,mark=at position #1 with {\arrow[#2,scale=2]{>}}}}}
\tikzstyle{vertex}=[circle, fill=black, inner sep=0pt, minimum size=5pt]
\tikzstyle{fixed}=[rectangle, fill={rgb:blue,2;black,2}, inner sep=0pt, minimum size=5pt]
\newcommand{\manivellelabels}[8] 
{
\begin{tikzpicture}[auto=right, scale={#8/3}, transform shape]
    \node[fixed, label=left:$a_1$] (a1) at (3,0) {};
    \node[fixed, label=above:$a_2$] (a2) at (-3/2,{3*sqrt(3)/2}) {};
    \node[fixed, label=below:$a_3$] (a3) at (-3/2,{-3*sqrt(3)/2}) {};
    \node[vertex, label=below:$x$] (x) at ({#3*3}, {#4*3}) {};
    \pgfmathsetmacro{\longueurun}{#1*6}
    \pgfmathsetmacro{\longueurdeux}{#2*6}
    \node (c0) at (x) [circle, inner sep=0pt, minimum size=\longueurdeux cm] {};
    \node (c1) at (a1) [circle, inner sep=0pt, minimum size=\longueurun cm] {};
    \node (c2) at (a2) [circle, inner sep=0pt, minimum size=\longueurun cm] {};
    \node (c3) at (a3) [circle, inner sep=0pt, minimum size=\longueurun cm] {};
    \node (c4) at (0,0) [draw, circle, inner sep=0pt, minimum size=6 cm] {};
    \node [vertex, label=above:$p_1$] (p1) at (intersection #5 of c0 and c1) {};
    \node [vertex, label=above:$p_2$] (p2) at (intersection #6 of c0 and c2) {};
    \node [vertex, label=above:$p_3$] (p3) at (intersection #7 of c0 and c3) {};
    \draw[thick] (a1) to node [scale={0.8}] {$l_1$} (p1) to node [scale={0.8}] {$l_2$} (x);
    \draw[thick] (a2) to node [scale={0.8}] {$l_1$} (p2) to node [scale={0.8}] {$l_2$} (x);
    \draw[thick] (a3) to node [scale={0.8}] {$l_1$} (p3) to node [scale={0.8}] {$l_2$} (x);
\end{tikzpicture}
}
\newcommand{\anneaux}[3] 
{
\shorthandoff{:}
\begin{tikzpicture}[auto=right]
\begin{scope}[scale=#3]
  \begin{scope}
    \clip (-1.8,-2) rectangle (2.2, 2);
    \coordinate (a1) at (1,0);
    \coordinate (a2) at ({-1/2},{sqrt(3)/2});
    \coordinate (a3) at ({-1/2}, {-sqrt(3)/2});
    \path[fill=black!20] (a1) circle (#1);
    \path[fill=black!20] (a2) circle (#1);
    \path[fill=black!20] (a3) circle (#1);
  \begin{scope}
    \clip (a1) circle (#1);
    \clip (a2) circle (#1);
    \clip (a3) circle (#1);
    \path[fill=black!50, ultra thick, draw=black] (a1) circle (#1);
    \path[ultra thick, draw=black] (a2) circle (#1);
    \path[ultra thick, draw=black] (a3) circle (#1);
    \path[ultra thick, draw=black] (a1) circle (#2);
    \path[ultra thick, draw=black] (a2) circle (#2);
    \path[ultra thick, draw=black] (a3) circle (#2);
  \end{scope}
    \path[thick, fill=white] (a1) circle (#2);
    \path[thick, fill=white] (a2) circle (#2);
    \path[thick, fill=white] (a3) circle (#2);
    \path[draw=black] (a1) circle (#1);
    \path[draw=black] (a2) circle (#1);
    \path[draw=black] (a3) circle (#1);
    \path[draw=black] (a1) circle (#2);
    \path[draw=black] (a2) circle (#2);
    \path[draw=black] (a3) circle (#2);
    \foreach \point in {a1,a2,a3} {
	   \path [fill=black] (\point) circle (1.5pt);}
  \end{scope}
  \end{scope}
\end{tikzpicture}
\shorthandon{:}
}
\begin{document}

\selectlanguage{english}

\title{Anosov geodesic flows, billiards and linkages}
\author{Mickaël Kourganoff\footnote{UMPA, ENS Lyon \newline 46, allée d'Italie \newline 69364 Lyon Cedex 07 \newline France}}


\maketitle

\begin{abstract}
Any smooth surface in $\mathbb R^3$ may be flattened along the $z$-axis, and the flattened surface becomes close to a billiard table in $\mathbb R^2$. We show that, under some hypotheses, the geodesic flow of this surface converges locally uniformly to the billiard flow. Moreover, if the billiard is dispersive and has finite horizon, then the geodesic flow of the corresponding surface is Anosov. We apply this result to the theory of mechanical linkages and their dynamics: we provide a new example of a simple linkage whose physical behavior is Anosov. For the first time, the edge lengths of the mechanism are given explicitly.
\end{abstract}

\section{Introduction}

\subsection{Geodesic flows and billiards} \label{sectIntroFirstPart}

In 1927, Birkhoff~\cite{birknoff1927dynamical} noticed the following fact: if one of the principal axes of an ellipsoid tends to zero, then the geodesic flow of this ellipsoid tends, at least heuristically, to the billiard flow of the limiting ellipse. In 1963, Arnold~\cite{arnold1963small} stated that the billiard flow in a torus with strictly convex obstacles could be approximated by the geodesic flow of a flattened surface of negative curvature, which would consist of two copies of the billiard glued together, and suggested that this might imply that such a billiard would be chaotic. Later, Sinaï~\cite{sinai1970dynamical} proved the hyperbolicity of the billiard flow in this case, without using the approximation by geodesic flows. In the general case, the correspondance between billiards and geodesic flows of shrinked surfaces is well-known, but it is difficult to use in practice, and although the results which hold in both cases are similar, they require different proofs. One of the difficulties is the following: near tangential trajectories, some geodesics converge to ``fake'' billiard trajectories, which follow the boundary of the obstacle for some time and then leave (see Figure~\ref{figFake}).

\begin{figure}
\centering
\begin{tikzpicture}
    \path[fill=black!20] (-2,-2) rectangle (2,2);
    \path[draw=black, fill=white, fleche=0.9:black] (0,0) circle (1);
    \path[draw=black, fleche=0.5:black] (-2,-1) -- (0,-1);
    \path[draw=black, fleche=0.5:black] ({1/sqrt(2)},{1/sqrt(2)}) -- ({1/sqrt(2)-1},{1/sqrt(2)+1});
\end{tikzpicture}
\caption{A ``fake'' billiard trajectory, which is the limit of a sequence of geodesics in the flattening surface. The billiard (in grey) is in $\mathbb T^2$ and there is one circular obstacle (in white).} \label{figFake}
\end{figure}
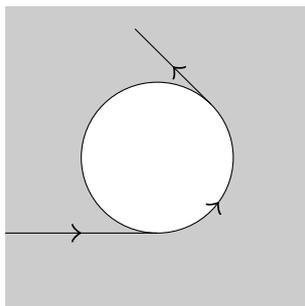

More precisely, for a given billiard $D \subseteq \mathbb T^2$ or $D \subseteq \mathbb R^2$, Birkhoff and Arnold's idea is to consider a surface $\Sigma$ in a space $E = \mathbb T^2 \times \mathbb R$ or $E = \mathbb R^3$, such that $D = \pi(\Sigma)$, where $\pi$ is the projection onto the two first coordinates; and then, to consider its image $\Sigma_\epsilon$ by a flattening map for $\epsilon > 0$:
\[ \begin{aligned} f_\epsilon : E & \to E \\ (x, y, z) & \mapsto (x, y, \epsilon z). \end{aligned} \]

The Euclidean metric of $\mathbb R^3$ induces a metric $h_\epsilon$ on $\Sigma_\epsilon$. It is convenient to consider the metric $g_\epsilon = (f_\epsilon)^*(h_\epsilon)$ on $\Sigma$, which tends to a degenerate $2$-form $g_0$ on $\Sigma$ as $\epsilon$ decreases to $0$. Thus, $(\Sigma, g_0)$ is not a Riemannian metric, but in many cases (for example, for the ellipsoid), it remains a metric space\footnote{In~\cite{burago1998uniform}, Burago, Ferleger and Kononeko used such degenerate spaces (\emph{Alexandrov spaces}) to estimate the number of collisions in some billiards.} and every billiard trajectory in $D$ corresponds to a geodesic in $(\Sigma, g_0)$. The Arzelà-Ascoli theorem guarantees that every sequence of unit speed geodesics $(\Gamma_n)$ in $(\Sigma, g_{\epsilon_n})$, with $\epsilon_n \to 0$, converges to a geodesic of $(\Sigma, g_0)$, up to a subsequence. In this paper, we prove a stronger version of this result.

Thus, from any given billard, Arnold constructs a surface which he flattens, so that its geodesic flow converges to the billiard flow. In this paper, we do the reverse: we prove that, under some natural hypotheses, the geodesic flow of \emph{any} given compact surface in $\mathbb R^3$, or $\mathbb T^2 \times \mathbb R$, flattening to a smooth billiard, converges locally uniformly to the billiard flow, away from grazing trajectories (Theorem~\ref{mainThm}). We also prove that, if the limiting billiard has finite horizon and is dispersive, then the geodesic flow in $(\Sigma, g_\epsilon)$ is Anosov for any small enough $\epsilon > 0$ (Theorem~\ref{thmAnosov}). In this case, it is well-known that the limiting billiard is chaotic, but the surface near the limit does not necessarily have negative curvature everywhere: some small positive curvature may remain in the area corresponding to the interior of the billiard, while the negative curvature concentrates in the area near the boundary. Since the limiting billiard has finite horizon, any geodesic falls eventually in the area of negative curvature, which guarantees that the flow is Anosov. The precise statements of our results are given in Section~\ref{sectMainResults}.

Other analogies have been made between billiards and smooth dynamical systems. In~\cite{turaev1998elliptic}, Turaev and Rom-Kedar showed that the billiard flow could be approximated in the $C^r$ topology by the behavior of a particle in $\mathbb R^2$ exposed to a potential field which explodes near the boundary\footnote{These systems are called \emph{soft billiards} in the literature: see also~\cite{balint2003correlation} for more details.}. In our situation, there is no potential and the particle has $3$ coordinates instead of $2$, but some of our techniques are similar to theirs.

Our setting has also much in common with the example of Donnay and Pugh~\cite{donnay2003anosov}, who exhibited in 2003 an embedded surface in $\mathbb R^3$ which has an Anosov flow. This surface consists of two big concentric spheres of very close radii, glued together by many tubes of negative curvature in a finite horizon pattern. In this surface, any geodesic eventually enters a tube and experiences negative curvature, while the positive curvature is small (because the spheres are big). However, in our situation, we may not choose the shape of the tubes and we need precise estimates on the curvature to show that the geodesic flow is Anosov.

\subsection{Mechanical linkages}
A mechanical linkage $\mathcal L$ is a physical system made of rigid rods joined by flexible joints. Mathematically, it is a graph $(V, E)$ with a length $l(e)$ associated to each edge $e$. Its \emph{configuration space} $\mathrm{Conf}(\mathcal L)$ is the set of all physical states of the system, namely:
\[ \mathrm{Conf}(\mathcal L) = \setof{\phi \in (\mathbb R^2)^V}{\forall (v, w) \in V, \lVert \phi(v) - \phi(w) \rVert = l(\phi(v), \phi(w))}. \]
It is an algebraic set in $(\mathbb R^2)^n = \mathbb R^{2n}$, where $n$ is the number of vertices. In the following, we will only consider linkages such that $\mathrm{Conf}(\mathcal L)$ is a smooth manifold in $(\mathbb R^2)^n$. It is the case for a generic choice of the edge lengths (see~\cite{jordan2001compact} for example).

In this paper, we are interested in the physical behavior of linkages when they are given an initial speed, without applying any external force. Of course, the dynamics depend on the distribution of the masses in the system: to simplify the problem, we will assume that the masses are all concentrated at the vertices of the graph. If one denotes the speed of each vertex by $v_i$, and the masses by $m_i$, the \emph{principle of least action} (see~\cite{arnol1978mathematical}) states that the trajectory between two times $t_0$ and $t_1$ will be a critical point of the kinetic energy
\[ \mathcal K = \frac{1}{2} \int_{t_0}^{t_1} \sum_{i=1}^n m_i v_i^2(t) dt, \]
which is also a characterization of the geodesics in the manifold $\mathrm{Conf}(\mathcal L)$ endowed with a suitable metric:
\begin{fact} \label{geodesicbehavior}
The physical behavior of the linkage $\mathcal L$, when it is isolated and given an initial speed, is the geodesic flow on $\mathrm{Conf}(\mathcal L) \subseteq (\mathbb R^2)^n = \mathbb R^{2n}$, endowed with the metric:
\[ g = \sum_{i=1}^n m_i (\mathrm dx_{2i-1}^2 + \mathrm dx_{2i}^2), \]
provided that the metric $g$ is nondegenerate.
In particular, if all the masses are equal to $1$, $g$ is the metric induced by the Euclidean $\mathbb R^{2n}$.
\end{fact}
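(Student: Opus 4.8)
The plan is to recognize the kinetic energy $\mathcal K$ as nothing but the Riemannian energy functional of the metric $g$, and then to invoke the classical variational characterization of geodesics. First I would rewrite the integrand: writing $\phi = (x_1,\dots,x_{2n}) : [t_0,t_1] \to \mathrm{Conf}(\mathcal L)$ for a motion of the linkage, the speed of the $i$-th vertex satisfies $v_i^2 = \dot x_{2i-1}^2 + \dot x_{2i}^2$, so that
\[ \sum_{i=1}^n m_i v_i^2(t) = g\bigl(\dot\phi(t), \dot\phi(t)\bigr), \]
and hence $\mathcal K = \tfrac12\int_{t_0}^{t_1} g(\dot\phi,\dot\phi)\,\mathrm dt$. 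Here $g$ is to be read as the restriction to $\mathrm{Conf}(\mathcal L)$ of the constant quadratic form on $\mathbb R^{2n}$; the nondegeneracy hypothesis guarantees that this restriction is a genuine (pseudo-)Riemannian metric on the submanifold, and when all masses are positive it is in fact positive definite, as claimed in the last sentence.

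Next, by the principle of least action (as stated in~\cite{arnol1978mathematical}), since the linkage is isolated and subject to no potential, the physical trajectory between times $t_0$ and $t_1$ is exactly a critical point of $\mathcal K$ among all paths in $\mathrm{Conf}(\mathcal L)$ sharing its endpoints. It then remains to prove the purely geometric statement that the critical points of the energy functional $E(\phi) = \tfrac12\int g(\dot\phi,\dot\phi)\,\mathrm dt$ on $(\mathrm{Conf}(\mathcal L), g)$ are precisely the geodesics. For this I would compute the first variation of $E$ along a variation $\phi_s$ with fixed endpoints and variation field $W$: integration by parts gives
\[ \frac{\mathrm d}{\mathrm ds}\Big|_{s=0} E(\phi_s) = -\int_{t_0}^{t_1} g\bigl(W, \nabla_{\dot\phi}\dot\phi\bigr)\,\mathrm dt, \]
where $\nabla$ is the Levi-Civita connection of the induced metric. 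Since $W$ is an arbitrary field tangent to $\mathrm{Conf}(\mathcal L)$ vanishing at the endpoints, $\phi$ is critical if and only if $\nabla_{\dot\phi}\dot\phi = 0$, i.e. $\phi$ is a geodesic, automatically parametrized at constant speed. Stringing the two characterizations together yields the Fact.

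The step requiring the most care is the geometric one, and within it two points stand out. First, one must use the \emph{energy} functional rather than the length functional: critical points of length are geodesics only up to reparametrization, whereas physical motion carries a definite time parametrization, and it is precisely the energy functional that selects the constant-speed geodesics realizing the flow. Second, the first-variation computation must be carried out intrinsically on the submanifold, so that the Euler–Lagrange equation is the \emph{induced} geodesic equation $\nabla_{\dot\phi}\dot\phi = 0$; equivalently, the ambient acceleration $\ddot\phi$ is $g$-orthogonal to $\mathrm{Conf}(\mathcal L)$, which is the assertion that the constraint (reaction) forces keeping the rods rigid do no virtual work. Finally I would note that fixed vertices contribute no coordinates to $\mathrm{Conf}(\mathcal L)$, so their masses are irrelevant, and that the nondegeneracy hypothesis is exactly what is needed, in the general case where some masses may vanish, for $g$ to restrict to a metric in which the geodesic flow is defined.
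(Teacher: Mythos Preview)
Your argument is correct and follows exactly the line the paper itself sketches in the paragraph immediately preceding the Fact: the kinetic energy is identified with the energy functional of the metric $g$, the principle of least action (with a reference to~\cite{arnol1978mathematical}) is invoked, and one observes that critical points of the energy are geodesics. The paper does not give a separate formal proof of this Fact; it treats it as a direct consequence of that paragraph. Your write-up simply fills in the standard first-variation computation and the remarks on nondegeneracy, which is entirely in keeping with the paper's intent.
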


\bigskip \noindent{\bf Anosov behavior.} We ask the following:
\begin{question}
Do there exist linkages with Anosov behavior?
\end{question}
The following theorem gives a theoretical answer to this question.

\begin{thm}
Let $(M, g)$ be any compact Riemannian manifold and $1 \leq k < +\infty$. Then there exists a linkage $\mathcal L$, a choice of masses, and a Riemannian metric $h$ on $M$, such that $h$ is $C^k$-close to $g$ and every connected component of $\mathrm{Conf}(\mathcal L)$ is isometric to $(M, h)$.
\end{thm}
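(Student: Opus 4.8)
The plan is to separate the \emph{topological} content of the statement (realizing $M$ as a configuration space) from its \emph{metric} content (making the induced metric close to $g$), and to handle the latter by embedding $M$ isometrically and then damping the auxiliary vertices with small masses. Concretely, I would first apply Nash's isometric embedding theorem to obtain a smooth embedding $e \colon (M,g) \hookrightarrow \mathbb R^{2m} = (\mathbb R^2)^m$ with $e^*(\mathrm{eucl}) = g$. Its image $e(M)$ is a compact submanifold on which the ambient Euclidean metric restricts exactly to $g$, and the idea is to build a linkage with $m$ distinguished \emph{terminal} vertices whose joint position is forced to lie on a copy of $e(M)$, so that the terminal vertices alone already carry the metric $g$.

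Next, to constrain the terminal vertices to $e(M)$ by rigid rods, I would invoke the functional-linkage machinery behind the universality theorems of Kempe, King and Kapovich--Millson. After replacing $e$ by a $C^{k+1}$-close embedding $\tilde e$ whose image is a nonsingular real algebraic set (possible by algebraic approximation of smooth embeddings), one assembles elementary linkages computing the defining polynomials and pins their outputs, producing a linkage $\mathcal L$ whose configuration space is a smooth manifold, each connected component of which projects diffeomorphically onto $M$ via the terminal vertices. Since $\tilde e$ is $C^{k+1}$-close to $e$, the pullback $\tilde e^*(\mathrm{eucl})$ to $M$ is $C^k$-close to $g$. The components differ only by the finite discrete choices (reflections) of the auxiliary vertices, which are restrictions of Euclidean isometries, so they are mutually isometric.

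Finally I would tune the masses: assign mass $1$ to the terminal vertices and a common small mass $\mu > 0$ to every auxiliary vertex. On each compact component the auxiliary positions are smooth functions of the terminal ones with first derivatives bounded independently of the configuration (this is exactly the nondegeneracy making $\mathrm{Conf}(\mathcal L)$ a manifold), so their contribution to the induced metric of Fact~\ref{geodesicbehavior} is $O(\mu)$ in $C^k$ norm, while the terminal vertices contribute $\tilde e^*(\mathrm{eucl})$. Letting $h$ be the resulting induced metric transported to $M$, one gets $h$ that is $C^k$-close to $g$, nondegenerate for $\mu$ small, and each component isometric to $(M,h)$.

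The main obstacle is the metric control, not the diffeomorphism type: the classical universality theorems only recover $M$ up to diffeomorphism, so the whole difficulty is to arrange simultaneously that (i) the terminal vertices sit on a genuinely isometric (up to $C^k$) copy of $M$, which forces the use of an \emph{isometric} embedding followed by an algebraic approximation that does not spoil the restricted metric, and (ii) the unavoidable auxiliary degrees of freedom are ``slaved'' with uniformly bounded velocities, so that their kinetic energy can be made negligible by shrinking $\mu$ without ever degenerating $g$. Verifying these uniform derivative bounds near the singular configurations of the linkage, i.e.\ that no auxiliary vertex moves arbitrarily fast for bounded terminal motion, is where the careful analysis is required.
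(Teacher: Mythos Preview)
Your outline follows exactly the same three-step strategy as the paper: Nash isometric embedding, $C^k$-algebraic approximation (Nash--Tognoli), and the Kapovich--Millson universality theorem to realize the resulting algebraic set as a partial configuration space. So the architecture is correct and essentially identical to the paper's.

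The one substantive difference is your choice of masses. You give the auxiliary vertices a small positive mass $\mu>0$ and then argue that their contribution to the kinetic metric is $O(\mu)$ in $C^k$; this is what forces you into the ``main obstacle'' you describe at the end, and also into the claim that the different components are related by Euclidean isometries of the auxiliary branches (a claim that is not obviously part of the Kapovich--Millson statement and would need separate justification). The paper simply sets $\mu=0$. This is legitimate: since Kapovich--Millson guarantees that on each component the projection to the terminal vertices is a \emph{diffeomorphism} onto the algebraic set $A\subseteq\mathbb R^{2n}$, the degenerate ambient form $\sum_{i\in W} m_i(\mathrm dx_{2i-1}^2+\mathrm dx_{2i}^2)$ restricts to a genuine Riemannian metric on the configuration space, namely the pullback of the Euclidean metric on $A$. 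With $\mu=0$ every component is then \emph{exactly} isometric to $(A,\mathrm{eucl})$, hence to $(M,h)$ with $h=\tilde e^{\,*}(\mathrm{eucl})$, and the mutual isometry of components as well as the ``uniform derivative bounds near singular configurations'' issue evaporate entirely. Your $\mu>0$ route can presumably be pushed through, but it is strictly harder and buys nothing here.
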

\begin{proof}
Embed $(M, g)$ isometrically in some $\mathbb R^{2n}$: this is possible by a famous theorem of Nash~\cite{nash1956imbedding}. With another theorem of Nash and Tognoli (see~\cite{tognoli1973congettura}, and also~\cite{ivanov1982approximation}, page 6, Theorem 1), this surface is $C^k$-approximated by a smooth algebraic set $A$ in $\mathbb R^{2n}$, which is naturally equipped with the metric induced by $\mathbb R^{2n}$. The manifold $A$ is diffeomorphic to $M$, and even isometric to $(M, h)$ where the metric $h$ is $C^k$-close to $g$. In 2002, Kapovich and Millson~\cite{kapovich2002universality} showed that any compact algebraic set $B \subseteq \mathbb R^{2n}$ is exactly the partial configuration space of some linkage, that is, the set of the possible positions of a subset of the vertices; moreover, if $B$ a smooth submanifold of $\mathbb R^{2n}$, each connected component of the whole configuration space may be required to be smooth and diffeomorphic to $B$ (see also \cite{kourganoff2014universality} for more details). Thus, there is a linkage and a subset of the vertices $W$ such that the partial configuration space of $W$ is $A$: each component of the configuration space of this linkage, with masses $1$ for the vertices in $W$ and $0$ for the others, is isometric to the algebraic set $A$ endowed with the metric induced by $\mathbb R^{2n}$, which is itself isometric to $(M, h)$.
\end{proof}

In particular, there exist configuration spaces with negative sectional curvature, and thus with Anosov behavior. This answer is somewhat frustrating, as it is difficult to construct such a linkage with this method in practice, and it would have a high number of vertices anyway, at least several hundreds.

In the 1980's, Thurston and Weeks~\cite{thurston1984mathematics} pointed out that the configuration spaces of quite simple linkages could have an interesting topology, by introducing the famous \emph{triple linkage} (see Figure~\ref{triplelinkage}): they showed that, for some choice of the lengths, its configuration space could be a compact orientable surface of genus $3$. Later, Hunt and McKay~\cite{hunt2003anosov} showed that there exists a set of lengths for the triple linkage such that the configuration space is close to a surface with negative curvature everywhere (except at a finite number of points), and thus its geodesic flow is Anosov.

\begin{figure}[!ht]
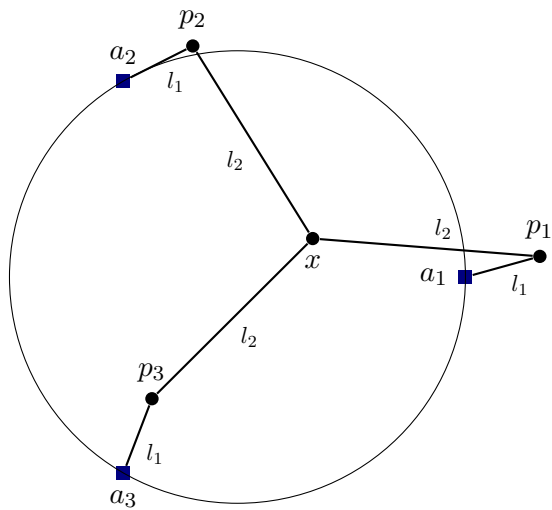

\centering
\manivellelabels{1/3}{1}{0.33}{0.17}{2}{1}{1}{3}
\caption{Thurston and Week's triple linkage. The vertices $a_1, a_2, a_3$ are pinned down to the plane, while the vertices $x, p_1, p_2, p_3$ may move.} \label{triplelinkage}
\end{figure}

\begin{figure}[!ht]
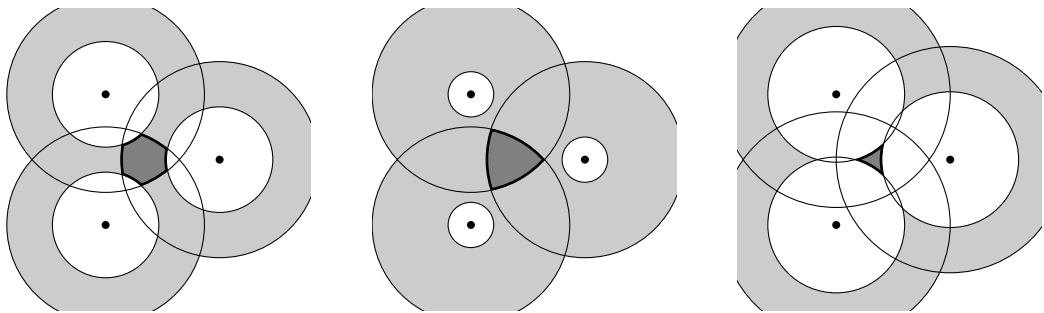

\centering
\begin{tabular}{ccc}
 \anneaux{1.3}{0.7}{1}
 & \anneaux{1.3}{0.3}{1}
 & \anneaux{1.5}{0.9}{1}
\end{tabular}
\caption{The workspace of the central vertex in Thurston's triple linkage is the intersection of three annuli centered at $a_1$, $a_2$ and $a_3$ (here, in dark grey). When the lengths $l_1$ and $l_2$ vary, the intersection may take different shapes. In the case on the left, the configuration space has genus $3$: it is what McKay and Hunt were interested in. On the right, the configuration space is a sphere.} \label{figWorkspace}
\end{figure}

\bigskip \noindent{\bf Asymptotic configuration spaces.} The computation of the curvature of a given configuration space is impossible in practice, most of the time. Thus, the idea of Hunt and McKay was to make some of the lengths tend to $0$, while the masses are fixed ($0$ for the vertex at the center and $1$ for the others), and to consider the limit of $\mathrm{Conf}(\mathcal L_\epsilon)$. At the limit, the surface is not the configuration space of a physical system anymore (it is called an \emph{asymptotic configuration space}), but it is easier to study because the equations are simpler. In the case of the triple linkage, the miracle is that the limit surface is Schwarz's well-known ``P surface'' in $\mathbb T^3$, defined by $\sum_{i=1}^{3} \cos x_i = 0$, which has negative curvature except at a finite number of points, and thus an Anosov geodesic flow. The structural stability of Anosov flows allows the authors to conclude that the configuration space of $\mathcal L_\epsilon$ for a small enough $\epsilon$ has an Anosov geodesic flow. In particular, one does not know how small $\epsilon$ has to be for $\mathcal L_\epsilon$ to be an Anosov linkage.

This technique may be applied to other linkages. For example, in 2013, Policott and Magalh{\~a}es~\cite{magalhaes2013geometry} tried to see what happened with the ``double linkage'', an equivalent of the triple linkage but with only two articulated arms (also called ``pentagon''). But the asymptotic configuration space in that case has both positive and negative curvature and it is impossible to conclude that the geodesic flow is Anosov, although their computer simulation suggests that it should be the case. In fact, since Hunt and McKay's example, no other linkage has been proved mathematically to be Anosov.

\bigskip \noindent{\bf Linkages and billiards.} In this paper, we provide a new example of an Anosov linkage, as an application of our results. To understand the link between linkages and billiards, consider Thurston's triple linkage, where all vertices have mass $0$ except the central vertex which has mass $1$. The workspace of the central vertex is a hexagon, and its trajectories are obviously straight lines in the interior of the workspace, but what happens physically when the vertex hits the boundary of the workspace? It turns out that it reflects by a billiard law. In fact, when the masses of the non-central vertices are a small $\epsilon^2 > 0$, the configuration space is equipped with the metric of a flattened surface $\Sigma_\epsilon$.

However, it may happen that the workspace of some vertex is a dispersive billiard, while the geodesic flow in the configuration space (with a small parameter $\epsilon$) is not Anosov. For example, consider Thurston's triple linkage in the case on the right of Figure~\ref{figWorkspace}. Then the workspace of the central vertex $x$ is a non-smooth dispersive billiard -- a triangle with negatively curved walls -- but the configuration space is topologically a sphere, so its geodesic flow cannot be Anosov. In fact, the corners of the billiard concentrate the positive curvature of the configuration space when it flattens.

\begin{figure}[!ht]
\centering
\includegraphics[height=200pt]{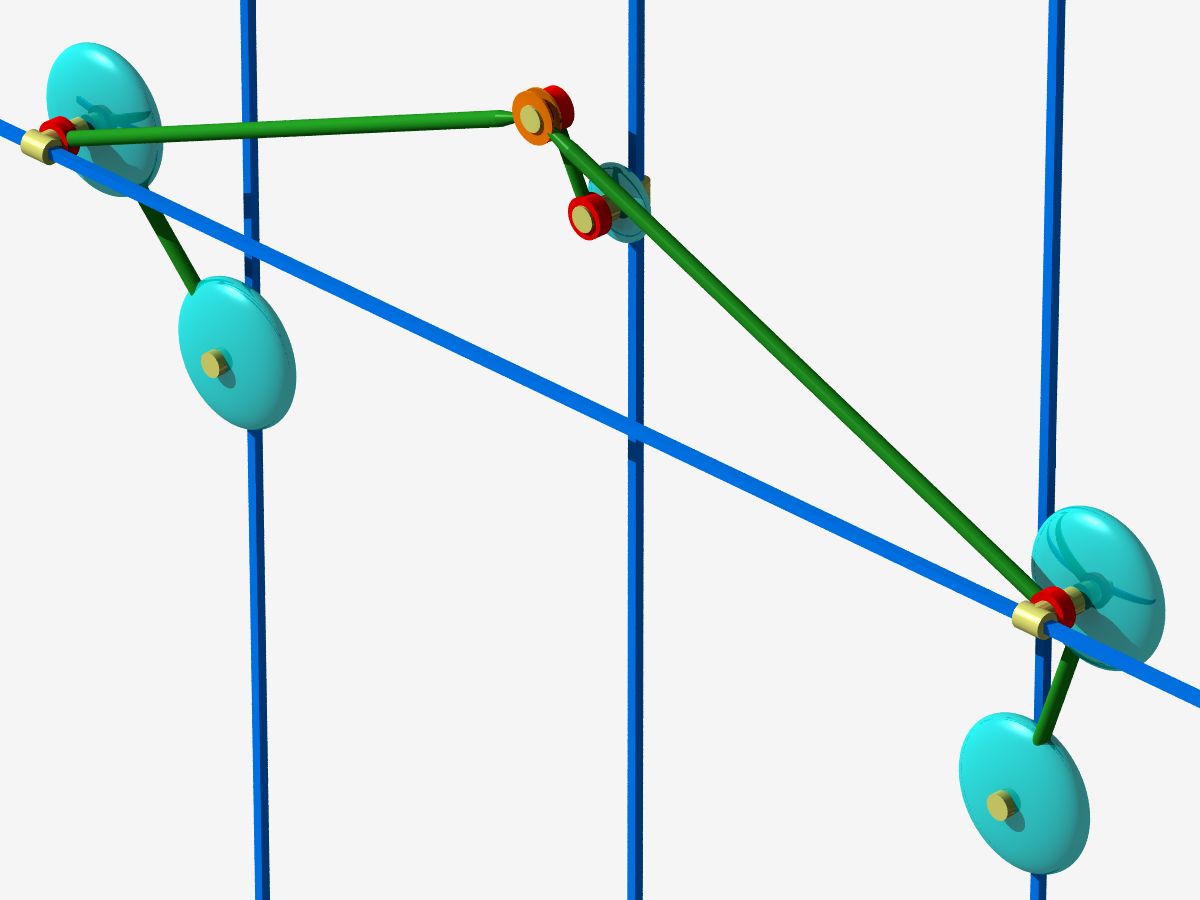}
\caption{A physical realization of our Anosov linkage.} \label{figPhysical}
\end{figure}

In our example, the billiard is not the workspace of a single vertex: it is the partial configuration space of four vertices, that is, the set of the possible positions of these vertices. It is \emph{a priori} a subset of $(\mathbb R^2)^4$, but in this particular case, it turns out that it may be seen as a subset of $\mathbb T^2$. The configuration space $\mathrm{Conf}(\mathcal L)$, in turn, may be seen as an immersed surface in $\mathbb T^2 \times \mathbb R$ which flattens to the billiard table as one of the masses tends to $0$.

\begin{figure}[!ht]
\centering
\includegraphics[height=200pt]{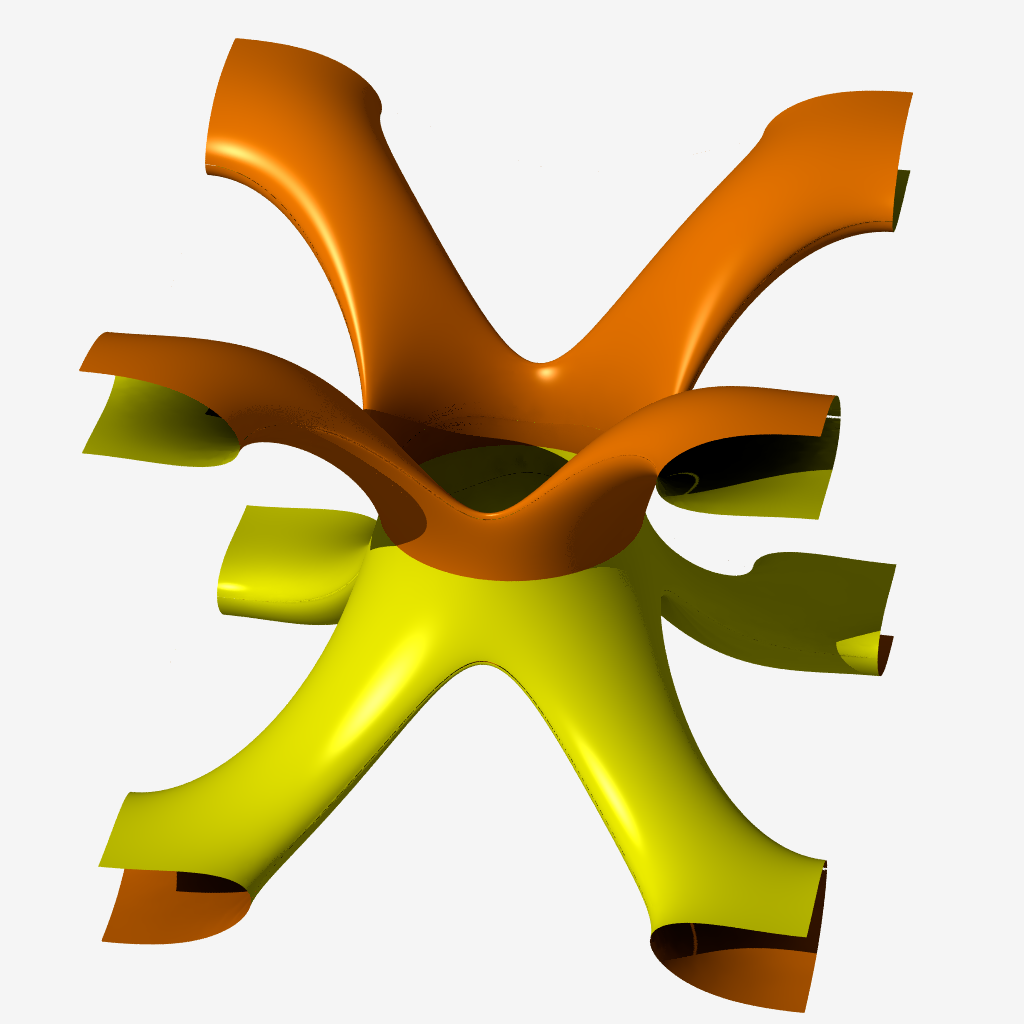}
~ \includegraphics[height=200pt]{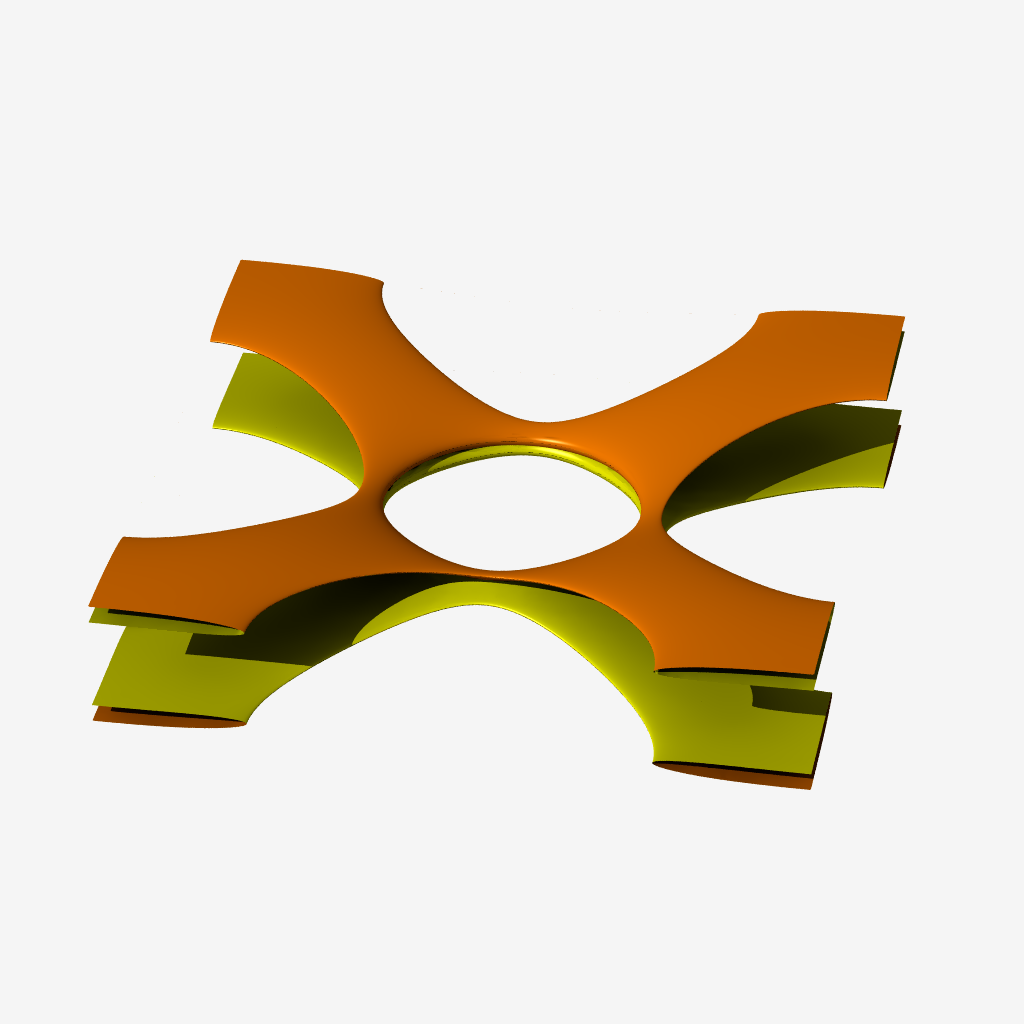}
\caption{On the left, the configuration space of our linkage in $\mathbb T^2 \times \mathbb R$, where $\mathbb T^2$ is horizontal and $\mathbb R$ vertical. It is a surface of genus 7, with a self-intersection (at the center of the picture). On the right, the flattened configuration space, which is close to a billiard table with finite horizon (see also Figure~\ref{figBilliard}).} \label{figConfSpace}
\end{figure}

Notice that our example is \emph{not} an asymptotic linkage, in the following sense: there is a whole explicit range of values for the edge lengths such that the linkage has an Anosov behavior. This is the first time that a linkage with explicit lengths is proved to be Anosov.

However, one mass has to be close to $0$ and our theorem does not say explicitly how close it has to be. Maybe this linkage is, in fact, Anosov even when the mass is equal to~$1$.

\bigskip \noindent{\bf A realistic physical system.} Similarly to Hunt and McKay~\cite{hunt2003anosov}, we insist on the fact that our linkage is realistic from a physical point of view. For example, it is possible to add small masses to the rods and to the central vertex without losing the Anosov property (using the structural stability of Anosov flows). See Hunt and McKay's article for more details about this aspect.

\section{Main results} \label{sectMainResults}

In this paper, we consider only smooth billiards in $B = \mathbb R^2$ or $\mathbb T^2$: we do not allow corners, to avoid the problems discussed in the introduction.

\begin{definition}
A smooth billiard table $D \subseteq B$ is the closure of an open set in $B$ such that $\partial D$ is a smooth manifold of dimension $1$ without boundary: in other words, each component $\partial_iD$ of $\partial D$ is the image of a smooth embedding $\Gamma_i : \mathbb T^1 \to B$. The curves $\Gamma_i$ are called the \emph{walls} of $D$. For each $\Gamma_i$, we define $T_i$ the unit tangent vector and $N_i$ the unit normal vector to the curve $\Gamma_i$ pointing towards $\Int D$. The curvature of $\Gamma_i$ is $\prodscal{\frac{dT_i}{ds}}{N_i}$. For example, the walls of a disc are positively curved, while the walls of its complementary set are negatively curved.
\end{definition}

Consider a compact surface $\Sigma$ immersed in $E = B \times \mathbb R$, whose canonical basis is written $(e_x, e_y, e_z)$. With the notations of Section~\ref{sectIntroFirstPart}, we denote by $\Phi^\epsilon_t$ the geodesic flow on $(\Sigma, g_\epsilon)$ and by $\Psi_t$ the billiard flow on $D$.

Recall that $\pi : E \to B$ is the projection onto the two first coordinates, while $f_\epsilon$ is a contraction along the $z$-axis. They induce mappings on the unit tangent bundles:
\[
\left\{
\begin{aligned}
\pi_* : T^1\left(\Sigma_\epsilon \cap \pi^{-1}\left(\Int D\right)\right) & \to T^1 D
\\ (q, p) & \mapsto \left(\pi(q), \frac{D_q \pi(p)}{\norm{D_q \pi(p)}}\right)
\end{aligned} \right.
\]
and
\[ \left\{
\begin{aligned}
(f_\epsilon)_* : T^1\Sigma & \to T^1\Sigma_\epsilon
\\ (q, p) & \mapsto \left(f_\epsilon(q), \frac{D_q f_\epsilon(p)}{\norm{D_q f_\epsilon(p)}}\right).
\end{aligned}
\right.
\]

Consider also the set $A$ of all $(t, q, p) \in \mathbb R \times T^1 \Sigma$ such that $\pi_*(q, p)$ and $\Psi_t \circ \pi_*(q, p)$ belong to $\mathrm{Int}~D$, and that the billiard trajectory between $\pi_*(q, p)$ and $\Psi_t \circ \pi_*(q, p)$ does not have a tangential collision with a wall of the billiard. Notice that $A$ is an open dense subset of $\mathbb R \times T^1 \Sigma$.

\begin{thm} \label{mainThm}
Assume that
\begin{enumerate}
\item \label{pasverticalsaufaubord} for all $q \in \pi^{-1}(\mathrm{Int}~D) \cap \Sigma$, $e_z \not\in T_q\Sigma$;
\item \label{courbureverticalenonnulle} for all $q \in \pi^{-1}(\partial D) \cap \Sigma$, the curvature of $\Sigma \cap V$ is nonzero at $q$, where $V$ is a neighborhood of $q$ in the affine plane $q + \mathrm{Vect} (e_z, (T_q\Sigma)^\bot)$.
\end{enumerate}

Then:
\[
\begin{aligned}
A & \to T^1(\mathrm{Int}~D) \\
(t, q, p) & \mapsto \pi_* \circ \Phi_t^\epsilon (q, p)
\end{aligned} \] converges uniformly on every compact subset of $A$ to 
\[
\begin{aligned}
A & \to T^1(\mathrm{Int}~D) \\
(t, q, p) & \mapsto \Psi_t \circ \pi_* (q, p)
\end{aligned}
\] as $\epsilon \to 0$.
\end{thm}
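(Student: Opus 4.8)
The plan is to analyze the geodesic flow $\Phi^\epsilon_t$ by writing the geodesic equations for the metric $g_\epsilon$ on $\Sigma$ in local coordinates and tracking their limit as $\epsilon \to 0$. Since hypothesis~\ref{pasverticalsaufaubord} guarantees that $e_z \notin T_q\Sigma$ over the interior of $D$, one may locally parametrize $\Sigma$ as a graph $z = u(x,y)$ over $B$, so that $\pi$ restricted to $\Sigma$ is a diffeomorphism onto $\Int D$ away from the boundary. In these coordinates the pulled-back metric $g_\epsilon$ has the explicit form $g_\epsilon = (1 + \epsilon^2 u_x^2)\,\mathrm dx^2 + 2\epsilon^2 u_x u_y\,\mathrm dx\,\mathrm dy + (1 + \epsilon^2 u_y^2)\,\mathrm dy^2$, which converges as $\epsilon \to 0$ to the flat Euclidean metric $\mathrm dx^2 + \mathrm dy^2$ on $\Int D$, uniformly with all derivatives on compact subsets of the interior. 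The first main step is therefore to show that over any compact set $K \subseteq \Int D$ the Christoffel symbols of $g_\epsilon$ converge to zero, so that the projected geodesics $\pi_* \circ \Phi^\epsilon_t$ converge to straight lines, i.e.\ to the billiard flow in the interior. This is a standard continuous-dependence-on-parameters argument for ODEs, uniform on compacta.

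The delicate part is the behavior near the boundary $\partial D$, where the reflection happens and where hypothesis~\ref{pasverticalsaufaubord} fails. Over a neighborhood of a point $q \in \pi^{-1}(\partial D)$, the tangent plane becomes vertical, so the graph parametrization over $B$ degenerates; here I would instead use the intrinsic geometry of the surface. The second main step is to establish, using hypothesis~\ref{courbureverticalenonnulle}, that a geodesic approaching the boundary region climbs up along the near-vertical part of the surface, turns around because of the nonzero curvature in the plane $q + \mathrm{Vect}(e_z, (T_q\Sigma)^\bot)$, and comes back down, so that its projection to $B$ undergoes a reflection obeying the billiard law with equal angles. The nonvanishing curvature is precisely what forces the turnaround to happen in a controlled, uniformly bounded manner as $\epsilon \to 0$; without it a geodesic could run off along the wall, producing the ``fake'' trajectories of Figure~\ref{figFake}. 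Restricting attention to the set $A$, whose trajectories avoid tangential collisions, is exactly what rules out these fake trajectories and makes the reflection transverse and hence uniformly controlled.

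The third step is to splice together the interior estimate and the boundary (reflection) estimate. Given a compact subset of $A$, each trajectory $\pi_*(q,p) \to \Psi_t \circ \pi_*(q,p)$ consists of finitely many straight segments in $\Int D$ separated by finitely many transverse reflections; the compactness and the exclusion of tangential collisions guarantee a uniform lower bound on the angles of incidence and a uniform upper bound on the number of collisions. One then propagates the convergence across each segment (first step) and across each reflection (second step), using a Gronwall-type control to show that the small errors do not accumulate catastrophically over the bounded time interval, and concludes uniform convergence of the full trajectory on the compact set.

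I expect the main obstacle to be the rigorous analysis of the boundary turnaround in the second step: one must show that the time spent by the geodesic in the near-vertical boundary region shrinks to zero and that the exit direction converges to the reflected direction, \emph{uniformly} over the compact family of initial conditions, all while the graph coordinates break down precisely there. This likely requires a separate local model of the surface near the wall (using the normal form furnished by the nonzero curvature hypothesis), a careful matching of this local solution to the incoming and outgoing geodesic segments, and quantitative control of the entry and exit times as functions of $\epsilon$ and of the incidence angle, which is bounded away from the tangential (grazing) case on the set $A$.
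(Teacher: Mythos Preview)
Your plan is correct and follows essentially the same three-stage architecture as the paper: smooth convergence to the flat metric on compacta of $\Int D$, a quantitative reflection lemma at the boundary for nongrazing incidence, and a splicing argument over finitely many bounces. The paper carries out your second step not via an intrinsic normal form but extrinsically, using the geodesic equation $\dot p = -N(q)\langle DN(q)\cdot p,\,p\rangle$ in $\mathbb R^3$: it introduces nested boundary neighborhoods $Z_\delta = \{|N_z^1|\le\delta\}$ and $V_\nu = \{|N_z^\epsilon|<1-\nu\}$, shows (via an osculating-ellipse computation coming precisely from hypothesis~\ref{courbureverticalenonnulle}) that the normal curvature in the nontangential directions blows up uniformly on $V_\nu$ as $\epsilon\to 0$, and deduces in Proposition~\ref{nonGrazing} that $p_x$ (the wall-tangential component) is nearly conserved while $p_y$ is forced to switch sign, with total time in $Z_\delta$ at most $\sqrt\delta$. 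One small correction to your narrative: hypothesis~\ref{courbureverticalenonnulle} is not what prevents the ``fake'' trajectories---those are excluded by the nongrazing restriction to $A$, as you note a sentence later---but rather what makes $H=N_z^1$ a submersion near $\partial D$ and furnishes the osculating circle needed for the blow-up estimate.
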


\begin{remarque}
If $\Sigma$ is a connected compact surface embedded in $\mathbb R^3$, with positive curvature everywhere, then the two assumptions of Theorem~\ref{mainThm} are automatically satisfied, and the description of $A$ is simpler.
\end{remarque}

On the other side, concerning dispersing billiards, we prove:

\begin{thm} \label{thmAnosov}
In addition to the two hypotheses of Theorem~\ref{mainThm}, assume that $B = \mathbb T^2$ and:
\begin{enumerate}
\setcounter{enumi}{2}
\item \label{billardcourburenegative} the walls of the billiard $D$ have negative curvature;
\item \label{horizonfini} the billiard $D$ has finite horizon: it contains no geodesic of $\mathbb T^2$ with infinite lifetime in the past and the future.
\end{enumerate}

Then for any small enough $\epsilon > 0$, the geodesic flow on $(\Sigma_\epsilon, h_\epsilon)$ is Anosov.
\end{thm}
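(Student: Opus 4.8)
The plan is to establish the Anosov property by exhibiting invariant stable and unstable cone fields for the geodesic flow on $(\Sigma_\epsilon, h_\epsilon)$, and then verifying the required uniform expansion/contraction estimates. The heuristic is clear from Theorem~\ref{mainThm}: as $\epsilon \to 0$, the geodesic flow converges to the billiard flow, and the dispersing finite-horizon billiard is uniformly hyperbolic by Sinaï's classical theory. So the strategy is to transport the hyperbolicity of the billiard to the nearby geodesic flow, with the main work being to control what happens in the boundary region where the convergence of Theorem~\ref{mainThm} degenerates (near grazing trajectories) and where the negative curvature concentrates.

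First I would set up the geometry precisely. Hypothesis~\ref{billardcourburenegative} (walls have negative curvature) together with hypothesis~\ref{courbureverticalenonnulle} forces the surface $\Sigma_\epsilon$ to have strongly negative Gaussian curvature in the region $\pi^{-1}(\partial D)$ corresponding to the walls: flattening a surface that curves in the vertical plane near a negatively-curved wall produces curvature of order $-1/\epsilon^2$ there. Meanwhile hypothesis~\ref{pasverticalsaufaubord} guarantees that over $\Int D$ the surface is a graph of bounded slope, so its curvature stays bounded (possibly positive, but of order $O(1)$) as $\epsilon \to 0$. Thus the qualitative picture is: bounded, possibly positive curvature over the interior, and a thin collar of very large negative curvature over the walls. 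I would make these curvature estimates quantitative, integrating the second fundamental form of $\Sigma_\epsilon$ and tracking the $\epsilon$-dependence carefully.

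Next I would build the invariant cone field. Over the negatively-curved collar, the standard construction for negatively curved surfaces gives contracting/expanding cones, and the passage of a geodesic through the collar produces a definite amount of hyperbolicity (a uniform lower bound on the expansion factor) precisely because the curvature integral across the wall is bounded below independently of $\epsilon$ — this is where hypothesis~\ref{courbureverticalenonnulle} pays off, mimicking the dispersing reflection of the billiard. Over the interior, where curvature may be positive, cones can shrink, but only by a bounded factor over bounded time; the key is the finite-horizon hypothesis~\ref{horizonfini}, which guarantees (via the uniform convergence of Theorem~\ref{mainThm} away from grazing orbits) that every geodesic crosses a wall-collar within a uniformly bounded time and travels a uniformly bounded distance between collars. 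Hence the bounded potential loss of hyperbolicity in the interior is always dominated by the definite gain at each wall crossing, yielding net uniform expansion along the unstable cone and contraction along the stable cone. I would phrase this as a quantitative invariance-plus-expansion statement for the cone field, which by the standard cone criterion implies the Anosov property.

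The hardest step will be handling the grazing (near-tangential) trajectories, exactly the ones excluded from the set $A$ in Theorem~\ref{mainThm} and responsible for the ``fake'' billiard trajectories of Figure~\ref{figFake}. For these, the uniform convergence to the billiard flow fails, so I cannot directly import the billiard hyperbolicity; instead I must show that such trajectories still experience adequate hyperbolicity from the surface geometry itself. The delicate point is that a geodesic may skirt the collar of negative curvature for a long ``time'' while turning, accumulating expansion in a way that must be shown not to create a neutral or mixed direction where the cone field fails to be invariant. I expect to control this by a direct analysis of the Jacobi equation along geodesics passing through the high-curvature collar, showing that even tangential passages contribute nonnegative expansion and that the finite-horizon condition still forces a genuine transverse crossing in bounded time. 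Once the cone field is shown invariant and eventually strictly expanding/contracting uniformly over all of $T^1\Sigma_\epsilon$ for small $\epsilon$, the conclusion that $\Phi^\epsilon_t$ is Anosov follows from the standard cone criterion for geodesic flows.
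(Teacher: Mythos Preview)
Your overall architecture is correct and close to the paper's: concentrate negative curvature in a thin collar $Z_\delta$ over the walls, note that the curvature over the interior is $O(1)$ (indeed $O(\epsilon)$) and hence harmless, and use finite horizon to force every orbit to visit the collar and pick up a definite amount of hyperbolicity. The paper carries this out via the scalar Riccati equation $u' = -K - u^2$ rather than explicit cone fields, but that is only a difference of language; the Riccati solution is precisely the slope bounding your unstable cone.

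Where your plan and the paper genuinely diverge is the treatment of grazing orbits, and here your proposal is vague exactly at the point that carries the proof. You write that you ``expect'' a direct Jacobi analysis to show tangential passages contribute nonnegative expansion and that finite horizon still forces a transverse crossing in bounded time; but you do not say how, and you cannot invoke Theorem~\ref{mainThm} for this since grazing orbits are precisely excluded from $A$. You also worry that a geodesic may ``skirt the collar \dots\ for a long time'', which if true would be a real obstruction. The paper sidesteps all of this with a clean dichotomy. Setting $\kappa = \kappa(\delta,\epsilon) = \max_{q\notin Z_\delta}|\gamma_\pm|$ (so $\kappa\to 0$ as $\epsilon\to 0$), either a unit-speed geodesic satisfies $\int_0^1 |K|\,dt > 3\kappa^2$, in which case $\int_0^1 K \le -\kappa^2$ (since $K\le \kappa^2$ everywhere) and a short Riccati argument gives $u(1)\ge \kappa^2/2$; or $\int_0^1 |K|\,dt \le 3\kappa^2$, and then one proves (this is the paper's Proposition~\ref{grazing} and Lemma~\ref{straightGeodesic}) that such a geodesic spends only time $O(\delta^{1/3})$ in each component of $Z_\delta$, visits only boundedly many components, and has $\pi_*(q,p)$ uniformly $C^1$-close to a straight line on $[\tfrac13,\tfrac23]$. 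By a compactness consequence of finite horizon, every near-straight curve of length $\tfrac13$ must hit a wall with angle bounded away from zero, contradicting the grazing estimates. So the second alternative is empty for small $\epsilon$, and one never has to produce hyperbolicity along a genuinely grazing orbit at all. This dichotomy, and in particular the ``small curvature integral $\Rightarrow$ nearly straight'' lemma, is the missing ingredient in your sketch.
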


In the proofs of Theorems~\ref{mainThm} and~\ref{thmAnosov}, we will assume that $\Sigma$ is \emph{embedded} in $E$, to simplify the notations, but the same proof works for the immersed case. In the case that $B = \mathbb T^2$, we will see $D$ as a periodic billiard in the universal cover $\mathbb R^2$, and $\Sigma$ as a periodic surface in $\mathbb R^3$.

In the last section, we apply Theorem~\ref{thmAnosov} to give an example of an Anosov linkage (see Figure~\ref{figureExemple}). All the vertices except one have only one degree of freedom and move on a straight line. This may be realized physically by \emph{prismatic joints} -- or, if one wants to stick to the traditional definition of linkages, it is possible to use Peaucellier's straight line linkage, or to approximate the straight lines by portions of arcs of large radius.

\begin{figure}[!ht]
\centering
\begin{tikzpicture}[scale=2]
\node[inner sep=0pt, label=below:{$x=-2$}] (v1) at (-2, -2) {};
\node[inner sep=0pt] (v2) at (-2, 2) {};
\node[inner sep=0pt, label=below:{$x=2$}] (v3) at (2, -2) {};
\node[inner sep=0pt] (v4) at (2, 2) {};
\node[inner sep=0pt, label=below:{$x=0$}] (v5) at (0, -2) {};
\node[inner sep=0pt] (v6) at (0, 2) {};
\node[inner sep=0pt, label=right:{$y=0$}] (v7) at (3, 0) {};
\node[inner sep=0pt] (v8) at (-3, 0) {};
\node[vertex, label=above:{$(a,0)$}] (v9) at ({-2-sqrt(3)/2}, 0) {};
\node[vertex, label=right:{$(0,f)$}] (v10) at (-2, -0.5) {};
\node[vertex, label=above:{$(b,0)$}] (v11) at (2.5, 0) {};
\node[vertex, label=left:{$(0,g)$}] (v12) at (2, -{sqrt(3)/2}) {};
\node[vertex, label=left:{$(d, e)$}] (v13) at ({(-sqrt(3)/2+0.5)/2},1) {};
\node[vertex, label=right:{$(0,c)$}] (v14) at (0,1.5) {};
\draw (v1) -- (v2);
\draw (v3) -- (v4);
\draw (v5) -- (v6);
\draw (v7) -- (v8);
\draw (v10) to node [below] {$1$} (v9) to node [below] {$l$} (v13) to node [below] {$l$} (v11) to node [right] {$1$} (v12);
\draw (v13) to node [left] {$r$} (v14);
\end{tikzpicture}
\caption{Mathematical description of our Anosov linkage.} \label{figureExemple}
\end{figure}
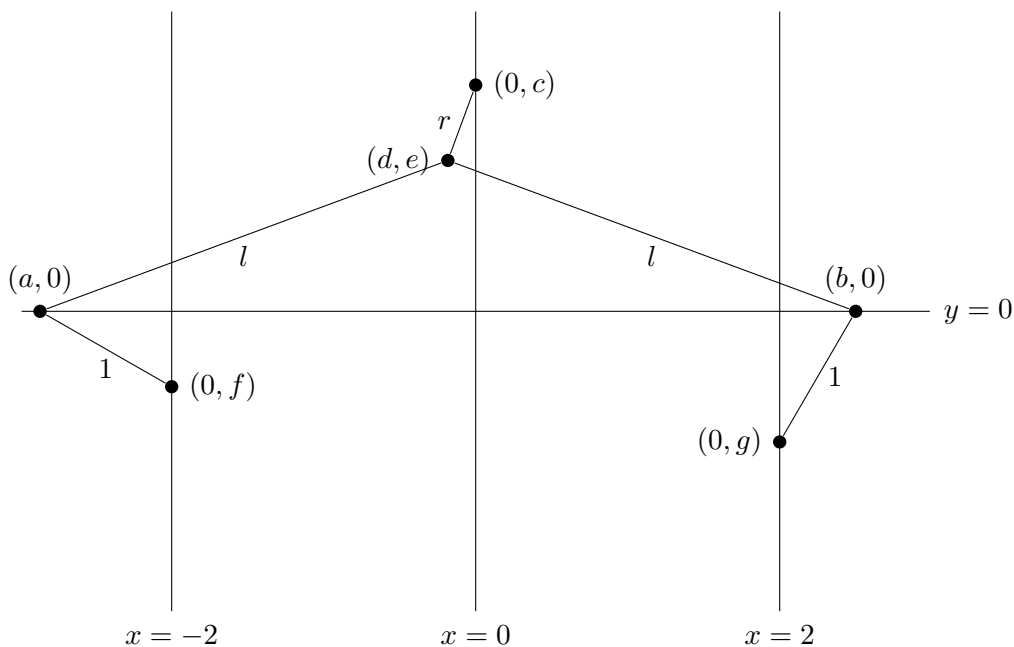

\begin{thm} \label{thmLinkage}
In the linkage of Figure~\ref{figureExemple}, choose the lengths of the rods such that $l + r > 3$, $l < 3$, $(l-2)^2 + r^2 < 1$ and $r < 1/2$. The mass at $(a, 0), (0, f), (b, 0)$ and $(0, g)$ is $1$, the mass at $(0,c)$ is $\epsilon^2$, while the mass at $(d, e)$ is $0$. Then for any sufficiently small $\epsilon > 0$, the geodesic flow on the configuration space of the linkage is Anosov.
\end{thm}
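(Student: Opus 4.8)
The plan is to realize the configuration space as a flattening surface $\Sigma\subseteq\mathbb T^2\times\mathbb R$ and then invoke Theorem~\ref{thmAnosov}. First I would set up coordinates adapted to the four unit-mass vertices. Two of them slide on the horizontal track $y=0$, at abscissas $a$ and $b$; the other two slide on the vertical tracks $x=-2$ and $x=2$, at ordinates $f$ and $g$. The two rods of length $1$ impose $(a+2)^2+f^2=1$ and $(b-2)^2+g^2=1$, so I set $a+2=\cos\theta,\ f=\sin\theta$ and $b-2=\cos\phi,\ g=\sin\phi$. A direct computation then gives $\mathrm da^2+\mathrm df^2=\mathrm d\theta^2$ and $\mathrm db^2+\mathrm dg^2=\mathrm d\phi^2$, so the mass-weighted metric carried by these four vertices is the flat metric of $\mathbb T^2$ with coordinates $(\theta,\phi)$. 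The massless vertex $(d,e)$ is eliminated (it is determined by the others on the smooth part), and the free ordinate $c$ of the light vertex $(0,c)$ plays the role of the vertical coordinate. The metric of Fact~\ref{geodesicbehavior} then reads $g_\epsilon=\mathrm d\theta^2+\mathrm d\phi^2+\epsilon^2\,\mathrm dc^2$, which is nondegenerate for $\epsilon>0$ and is exactly the flattening metric of $\Sigma=\{(\theta,\phi,c)\}$. Hence the physical dynamics is the geodesic flow of $(\Sigma_\epsilon,h_\epsilon)$, and it suffices to check the four hypotheses of Theorem~\ref{thmAnosov}.

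Next I would identify the billiard table $D=\pi(\Sigma)$ as the locus where the whole configuration exists. Since $(d,e)$ lies at distance $l$ from $(a,0)$ and $(b,0)$, one gets $e^2=l^2-\big((a-b)/2\big)^2\ge0$, i.e. $\cos\theta-\cos\phi\ge 4-2l$; and the reality of $c$ (the light vertex lies on $x=0$ at distance $r$ from $(d,e)$) forces $|\cos\theta+\cos\phi|\le 2r$. Writing $u=\cos\theta,\ v=\cos\phi$, this gives $D=\{|u+v|\le 2r,\ u-v\ge 4-2l\}$, the torus minus three convex obstacles $O_1,O_2,O_3$ centred at $(0,0),(\pi,\pi),(\pi,0)$. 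Three of the length inequalities enter here: $l<3$ makes $O_3$ nonempty, $r<1/2$ keeps the band $\{|u+v|\le 2r\}$ a thin neighbourhood of the two diagonals so that $O_1,O_2$ are genuine convex obstacles, and $l+r>3$ forces the three obstacle \emph{closures} to be pairwise disjoint --- e.g. $u+v\ge 2r$ together with $u-v\le 4-2l$ would yield $v\ge l+r-2>1$. This disjointness is precisely what guarantees that $\partial D$ is a disjoint union of three smooth closed curves, so that $D$ is a smooth billiard table without corners.

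I would then verify the negative curvature of the walls and the two hypotheses of Theorem~\ref{mainThm}. Applying the level-set curvature formula to $F=\cos\theta\pm\cos\phi$, the numerator is, up to sign, $(u+v)(1-uv)$ on the walls $\{u+v=\pm 2r\}$ and $(v-u)(1+uv)$ on $\{u-v=4-2l\}$. Both are sign-definite along the respective wall: $1-uv>0$ there, while on the third wall $1+uv=v^2+(4-2l)v+1$ has negative discriminant because $l\in(5/2,3)$ (the bound $l>5/2$ coming from $l+r>3$ and $r<1/2$), and $v-u=2l-4>0$. Comparing with the inward normal in each case gives negative geodesic curvature throughout $\partial D$, so the billiard is dispersing. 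The hypotheses of Theorem~\ref{mainThm} follow from the graph structure of $\Sigma$: over $\Int D$ the surface is locally a graph $c(\theta,\phi)$, so $e_z\notin T_q\Sigma$; and over $\partial D$ the two merging sheets are governed by a square root --- $\sqrt{r^2-d^2}$ on $\partial O_1,\partial O_2$ and $\sqrt{l^2-((a-b)/2)^2}$ on $\partial O_3$ --- a nondegenerate fold whose vertical slice has nonzero curvature.

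The crux, and the step I expect to demand the most care, is the finite-horizon condition, and this is exactly where the sharp inequality $(l-2)^2+r^2<1$ is used. A line of slope $\ne\pm1$ cannot stay in the thin band forever, since $\cos\theta+\cos\phi$ along it oscillates with amplitude larger than $2r$; it must leave through $O_1$ or $O_2$. A line of slope $\pm1$, say $\phi=\theta+c_0$, keeps $|u+v|=2|\cos(c_0/2)|$, so it remains in the band only when $|\cos(c_0/2)|\le r$, i.e. $|\sin(c_0/2)|\ge\sqrt{1-r^2}$; along it $u-v=\cos\theta-\cos(\theta+c_0)$ attains the minimum $-2|\sin(c_0/2)|\le -2\sqrt{1-r^2}$, which lies below $4-2l$ precisely when $(l-2)^2+r^2<1$, so the line dips into $O_3$. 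Thus every geodesic of $\mathbb T^2$ is eventually blocked, $D$ has finite horizon, and all four hypotheses of Theorem~\ref{thmAnosov} hold; therefore for every sufficiently small $\epsilon>0$ the geodesic flow on the configuration space is Anosov.
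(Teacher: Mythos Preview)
Your outline is correct and follows the paper's route: realise $\mathrm{Conf}(\mathcal L)$ as an immersed surface in $\mathbb T^2\times\mathbb R$ with the flattening metric, identify the billiard, and verify the four hypotheses of Theorem~\ref{thmAnosov}. (Your angle convention differs from the paper's by $\theta\mapsto\theta+\pi$, which harmlessly swaps the roles of $\cos\theta\pm\cos\phi$.)

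Two steps are asserted rather than proved and deserve a word. First, that $\mathrm{Conf}(\mathcal L)$ is a smooth surface and that the projection to $(\theta,\phi,c)$ is an immersion with $g_\epsilon$ nondegenerate: over $\partial D$ the $(\theta,\phi)$ chart breaks down, and one must check that $(\theta,c)$ or $(\phi,c)$ still gives a chart there. The paper does this by a short case analysis (Facts~\ref{graphe1}--\ref{graphe3}), and it is precisely here that $l+r>3$ enters to exclude simultaneous degeneracies---your disjoint-obstacle computation is the billiard shadow of the same fact. Second, the finite-horizon claim for slopes $\ne\pm1$: ``amplitude larger than $2r$'' is true but needs an argument. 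The paper observes that the $\phi$-values hit at $\theta\equiv0$ form a coset of a subgroup of $\mathbb T$, nontrivial unless the slope is $0$ or $\pm1$, hence unable to fit in the arc $\{\cos\phi\le 2r-1\}$ of length $<\pi$; an equally short alternative is Parseval, since for $m\ne\pm1$ the two cosines are $L^2$-orthogonal and $\lVert\cos\theta+\cos(m\theta+c_0)\rVert_{L^2}=1>2r$. Your handling of slopes $\pm1$ and of the wall curvatures matches the paper's computations.
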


\bigskip \noindent{\bf Structure of the proofs.} The main tool to study the geodesic flow is the geodesic equation which involves the position $q$, the speed $p$, and the normal vector $N$ to $\Sigma$:

\begin{equation} \label{eqDerivees} \dot{p} = - N(q) \prodscal{DN(q) \cdot p}{p}. \end{equation}
It is simply obtained by taking the derivative of the equation
\[ \prodscal{p}{N} = 0. \]
Equation~\ref{eqDerivees} involves the second fundamental form, which is closely linked to the curvature of $\Sigma$: in Section~\ref{sectThmConvergence}, we make precise estimates on the second fundamental form, study nongrazing collisions with the walls of the billiard (Lemma~\ref{nonGrazing}) and prove the uniform convergence of the flow (Theorem~\ref{mainThm}). In Section~\ref{sectThmAnosov}, we prove that the geodesic flow is Anosov (Theorem~\ref{thmAnosov}): for this, we also need to study grazing trajectories (Lemma~\ref{grazing}), and examine the solutions of the Ricatti equation
\[ \left\{
\begin{aligned}
u (0) & = 0
\\ u'(t) & = - K(t) - u^2(t)
\end{aligned} \right. \]
where $K$ is the Gaussian curvature of $\Sigma$. In the last section, we give a proof of Theorem~\ref{thmLinkage}, which mainly consists in checking that the configuration space of the linkage of Figure~\ref{figureExemple} satisfies the assumptions of Theorem~\ref{thmAnosov}.

\begin{figure}[!ht]
\centering
\includegraphics[height=200pt]{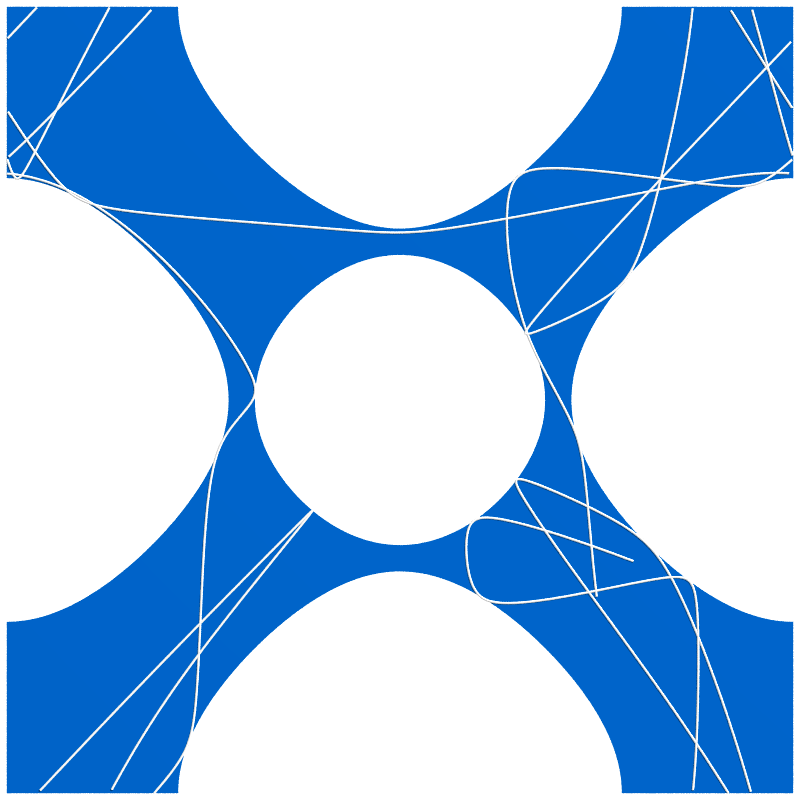}
\caption{The projection of the geodesics of the configuration space onto the billiard table. Theorem~\ref{mainThm} states that the nongrazing geodesics are close to billiard trajectories. For Theorem~\ref{thmAnosov}, we also need to study the grazing trajectories, whose behavior is described by Figure~\ref{figFake}.} \label{figBilliardGeodesics}
\end{figure}

\bigskip \noindent{\bf Some questions.} For technical reasons, we had to introduce Assumption~\ref{courbureverticalenonnulle} in the statements of Theorems~\ref{mainThm} and~\ref{thmAnosov}, but is it necessary? One may also wonder whether these theorems generalize to surfaces immersed in higher-dimensional spaces, or if the convergence in Theorem~\ref{mainThm} holds for the $C^k$ topology.

\section{Proof of Theorem~\ref{mainThm}} \label{sectThmConvergence}

In $\Sigma \cap \pi^{-1}(\mathrm{Int}~D)$, $g_\epsilon$ converges smoothly to a flat metric, so the geodesic flow converges smoothly to the billiard flow. Hence, the difficulty of the proof concentrates at the boundary of the billiard table: there, we have to show that the geodesic flow satisfies a billiard reflection law at the limit (Proposition~\ref{nonGrazing}). For this, we will need some estimates on the second fundamental form of the surface near the boundary.

First, let us fix some notations.

\begin{definition} \label{defNormal}
Given $\epsilon > 0$, we may choose a normal vector $N^\epsilon$ on any simply connected subset of $\Sigma_\epsilon$. We will always assume implicitly that such a choice of orientation has been made: since we work locally on the surface, it is not necessary to have a global orientation.

Consider $N_x^\epsilon, N_y^\epsilon, N_z^\epsilon$ the three components of $N^\epsilon$ in $\mathbb R^3$. Thus for $q \in \Sigma$:

\[ N_x^\epsilon (f_\epsilon(q)) = \frac{N_x^1}{\sqrt{(N_x^1)^2 + (N_y^1)^2 + \frac{1}{\epsilon^2} (N_z^1)^2}}, \quad N_y^\epsilon (f_\epsilon(q)) = \frac{N_y^1}{\sqrt{(N_x^1)^2 + (N_y^1)^2 + \frac{1}{\epsilon^2} (N_z^1)^2}} \] and \[ N_z^\epsilon (f_\epsilon(q)) = \frac{\frac{1}{\epsilon} N_z^1}{\sqrt{(N_x^1)^2 + (N_y^1)^2 + \frac{1}{\epsilon^2} (N_z^1)^2}}. \]

We shall often simply write $N$ instead of $N^\epsilon$, when there is no possible confusion.

Finally, define \[H (f_\epsilon(q)) = N_z^1(q). \] The quantity $H (f_\epsilon(q))$ has the advantage of being independent of $\epsilon$, contrary to $N^\epsilon (f_\epsilon(q))$.

For all $q \in \Sigma_\epsilon$, we know that $\pi(q) \in \partial D$ if and only if $N_z(q) = 0$, or equivalently, $H(q) = 0$. This gives us two notions of ``being close to the boundary'': for all $\epsilon, \delta, \nu \in (0, 1)$, we define
\[ V_\nu^\epsilon = \setof{q \in \Sigma_\epsilon}{ \abs{N_z^\epsilon(f_\epsilon(q))} < 1 - \nu } \]
and
\[ Z_\delta^\epsilon := \setof{ q \in \Sigma_\epsilon }{ \abs{H(q)} \leq \delta }. \]

To simplify the notations, we will often omit the $\epsilon$ and simply write $V_\nu$ and $Z_\delta$. Notice that for any $\delta$ and $\nu$, when $\epsilon$ is sufficiently small, we have $V_\nu \subseteq Z_\delta$, because the metric tends to a flat one outside $Z_\delta$.
\end{definition}

\begin{definition}[Darboux frame] \label{darboux}
For any unit speed curve $\Gamma : [0,1] \to \Sigma$, we define $T = \Gamma'(s)$ the tangent vector. The normal vector $N$ is the unit normal to $(T_{\Gamma(s)} \Sigma)$. Finally, the \emph{normal geodesic vector} $G$ is defined by $G = N \wedge T$.

In this frame, there exist three quantities $\gamma_N^\epsilon$ (normal curvature), $\gamma_G^\epsilon$ (geodesic curvature) and $\tau_G^\epsilon$ (geodesic torsion), also written simply $\gamma_N$, $\gamma_G$ and $\tau_G$, such that
\[
\begin{aligned}
\frac{\mathrm dT}{\mathrm ds} & = \gamma_G G + \gamma_N N
\\ \frac{\mathrm dG}{\mathrm ds} & = \gamma_G T + \tau_G N
\\ \frac{\mathrm dN}{\mathrm ds} & = \gamma_N T + \tau_G G
\end{aligned}
\]

The (traditional) curvature of $\Gamma$ considered as a curve in $\mathbb R^3$ is $k = \norm{\frac{\mathrm dT}{\mathrm ds}}$. Thus, if $k \neq 0$, writing $n = \frac{\mathrm dT / \mathrm ds}{\norm{\mathrm dT / \mathrm ds}}$, we obtain:
\begin{equation} \label{eqk} \gamma_N = k \prodscal{N}{n}, \end{equation}
and in particular:
\begin{equation} \label{eqk2} \abs{\gamma_N} = k \sqrt{1 - \prodscal{N}{T \wedge n}^2}. \end{equation}
For example, if $\Gamma$ is the intersection of $\Sigma$ with a plane $P$, $T \wedge n$ has the same direction as the normal vector of $P$, so it is convenient to use Equation~\ref{eqk2}.

Notice that the normal curvature at $s$ only depends on $\Gamma(s)$ and $\Gamma'(s)$: thus we may write $\gamma_N(q, p)$ for $(q, p) \in T^1\Sigma$. Moreover, we have the relation:

\[ \gamma_N(q, p) = \prodscal{ DN(q) \cdot p } {p}. \]
\end{definition}

For any $q \in \Sigma_\epsilon$, $\gamma_+^\epsilon(q)$ and $\gamma_-^\epsilon(q)$ (sometimes written simply $\gamma_+$ and $\gamma_-$) are the principal curvatures of $\Sigma_\epsilon$ at $q$. They correspond respectively to the maximum and minimum normal curvatures at $f_\epsilon(q)$.

$K^\epsilon(q) = \gamma_+^\epsilon(q) \gamma_-^\epsilon(q)$ is the Gaussian curvature of $\Sigma_\epsilon$ at $q$.

We can now make a first remark:
\begin{fact} \label{submersion}
For any small enough $\delta > 0$, $H|_{Z_\delta}$ is a submersion from $Z_\delta$ to $\mathbb R$.
\end{fact}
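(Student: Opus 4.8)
The plan is to show that $dH_q$ is surjective onto $\mathbb R$ -- equivalently, simply nonzero -- at every point $q$ of $Z_\delta$ once $\delta$ is small. Since $H(f_\epsilon(q)) = N_z^1(q)$ does not depend on $\epsilon$, I would first transport everything to the fixed surface $\Sigma = \Sigma_1$ through the diffeomorphism $f_\epsilon$ and study the function $H = N_z = \prodscal{N}{e_z}$ on $\Sigma$, where $N$ is a locally chosen unit normal. The quantity $\norm{dH_q}$ is globally well defined independently of the sign of $N$ (changing $N$ to $-N$ only flips the sign of $H$ and of $dH$), so working with a local orientation is harmless.

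The core computation identifies $dH$ with the shape operator. Differentiating $H = \prodscal{N}{e_z}$ along a curve through $q$ with velocity $v \in T_q\Sigma$ gives $dH_q(v) = \prodscal{DN(q)\cdot v}{e_z}$. As $DN(q)\cdot v$ lies in $T_q\Sigma$, only the tangential part $e_z^\top$ of $e_z$ contributes, and the self-adjointness of $DN(q)$ yields $dH_q(v) = \prodscal{v}{DN(q)\cdot e_z^\top}$. Hence $H$ fails to be a submersion at $q$ precisely when $DN(q)\cdot e_z^\top = 0$.

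I would then verify the submersion condition on the zero set $Z_0 = \pi^{-1}(\partial D)\cap \Sigma$ itself. There $N_z = 0$, so $e_z \bot N$, that is $e_z^\top = e_z$, and the condition reduces to $DN(q)\cdot e_z \neq 0$. This is exactly where Assumption~\ref{courbureverticalenonnulle} enters: the plane $q + \mathrm{Vect}(e_z, (T_q\Sigma)^\bot)$ contains the normal direction and meets $\Sigma$ in a \emph{normal} section whose tangent at $q$ is $e_z$ (since $T_q\Sigma \cap \mathrm{Vect}(e_z, N) = \mathbb R e_z$ at a boundary point). By the computation behind Equation~\ref{eqk2} the factor $\prodscal{N}{T\wedge n}$ vanishes for a normal section, so the curvature of this section equals $\abs{\gamma_N(q,e_z)} = \abs{\prodscal{DN(q)\cdot e_z}{e_z}}$. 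Assumption~\ref{courbureverticalenonnulle} makes this nonzero, which forces $DN(q)\cdot e_z \neq 0$, and therefore $dH_q \neq 0$ for every $q \in Z_0$.

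Finally I would upgrade this from $Z_0$ to $Z_\delta$ by compactness. The bad set $\mathcal B = \setof{q \in \Sigma}{dH_q = 0}$ is closed, being the zero set of the continuous function $q \mapsto \norm{dH_q}$, and the previous step shows $\mathcal B \cap Z_0 = \emptyset$. As $\Sigma$ is compact the sets $Z_\delta$ are compact and nested with $\bigcap_{\delta > 0} Z_\delta = Z_0$; the finite intersection property then gives $Z_\delta \cap \mathcal B = \emptyset$ for all sufficiently small $\delta$, which is exactly the assertion. The one delicate point I anticipate is the identification in the third paragraph -- correctly matching Assumption~\ref{courbureverticalenonnulle}, phrased as a plane-section curvature, with the non-vanishing of the tangent vector $DN(q)\cdot e_z$ -- whereas the reduction to $\Sigma_1$ and the final compactness step are routine.
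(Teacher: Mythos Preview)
Your proof is correct and follows essentially the same route as the paper: both reduce the submersion condition on $Z_0 = \pi^{-1}(\partial D)\cap\Sigma$ to the non-vanishing of the normal curvature $\gamma_N(q,e_z)=\prodscal{DN(q)\cdot e_z}{e_z}$, which is precisely Assumption~\ref{courbureverticalenonnulle} once one observes that the plane $q+\mathrm{Vect}(e_z,(T_q\Sigma)^\bot)$ is a normal section with tangent $e_z$, and then pass from $Z_0$ to $Z_\delta$ by continuity. Your treatment is more explicit---the shape-operator identity $dH_q(v)=\prodscal{v}{DN(q)\cdot e_z^\top}$ and the finite-intersection compactness step are spelled out---but the underlying argument is the same as the paper's.
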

\begin{proof}
Let $q \in \pi^{-1}(\partial D) \cap \Sigma$ and consider a curve $\gamma$ which parametrizes the section of $\Sigma$ by the plane containing the directions $(Oy)$ and $(Oz)$, with $\gamma(0) = q$. Assumption~\ref{courbureverticalenonnulle} of the theorem implies that $\gamma$ has nonzero curvature at $q$, and thus $\prodscal{D_{\gamma(t)}H(\gamma'(t))}{\gamma'(t)}$ is nonzero for any small enough $t$. Therefore, $H$ is a submersion from $Z_\delta$ to $\mathbb R$.
\end{proof}

\begin{lemme} \label{lemmeCourbureRestePositive}
Let $(q, p) \in T^1\Sigma$ and $(q^\epsilon, p^\epsilon) = (f_\epsilon)_*(q, p)$. If $\gamma_N^\epsilon(q^\epsilon, p^\epsilon) \neq 0$ for some $\epsilon > 0$, then the sign of $\gamma_N^\epsilon(q^\epsilon, p^\epsilon)$ is the same for all $\epsilon > 0$.
\end{lemme}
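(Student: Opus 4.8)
The plan is to show that, for every $\epsilon>0$, the number $\gamma_N^\epsilon(q^\epsilon,p^\epsilon)$ is a \emph{strictly positive} multiple of the fixed number $\gamma_N^1(q,p)$; the statement about signs is then immediate. To do this I would work with a curve rather than with $DN$ directly. Fix any smooth curve $c:(-\eta,\eta)\to\Sigma$ with $c(0)=q$ and $\dot c(0)=p$, and set $\kappa=\ddot c(0)$. Since $f_\epsilon$ is linear, its image $\tilde c=f_\epsilon\circ c$ lies on $\Sigma_\epsilon$, passes through $q^\epsilon$, and satisfies $\dot{\tilde c}(0)=f_\epsilon(p)$ and $\ddot{\tilde c}(0)=f_\epsilon(\kappa)$ with no lower-order terms. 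The unit tangent of $\tilde c$ at $0$ is exactly $p^\epsilon$, so $\tilde c$ realizes the normal curvature in that direction. As $\tilde c$ is not unit-speed, I would use the definition $\gamma_N(q,p)=\prodscal{DN(q)\cdot p}{p}$ together with the identity obtained by differentiating $\prodscal{N^\epsilon\circ\tilde c}{\dot{\tilde c}}\equiv 0$, namely $\prodscal{DN^\epsilon\cdot\dot{\tilde c}}{\dot{\tilde c}}=-\prodscal{N^\epsilon}{\ddot{\tilde c}}$. Since $\gamma_N^\epsilon$ is a quadratic form in the velocity, this yields
\[ \gamma_N^\epsilon(q^\epsilon,p^\epsilon)=\frac{\prodscal{DN^\epsilon(q^\epsilon)\cdot\dot{\tilde c}(0)}{\dot{\tilde c}(0)}}{\norm{f_\epsilon(p)}^2}=-\frac{\prodscal{N^\epsilon(q^\epsilon)}{f_\epsilon(\kappa)}}{\norm{f_\epsilon(p)}^2}. \]

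The crux is the numerator. Using the explicit formula of Definition~\ref{defNormal}, write $N^\epsilon(q^\epsilon)=M_\epsilon/\norm{M_\epsilon}$ with $M_\epsilon=(N_x^1,N_y^1,\tfrac1\epsilon N_z^1)$ and $\kappa=(\kappa_x,\kappa_y,\kappa_z)$. Then the powers of $\epsilon$ cancel exactly:
\[ \prodscal{f_\epsilon(\kappa)}{M_\epsilon}=\kappa_x N_x^1+\kappa_y N_y^1+(\epsilon\kappa_z)\Big(\tfrac1\epsilon N_z^1\Big)=\prodscal{\kappa}{N^1}. \]
This is simply the identity $\prodscal{Av}{(A^{-1})^\top w}=\prodscal{v}{w}$ for $A=f_\epsilon$, expressing that $f_\epsilon$ transports tangent vectors while $M_\epsilon=(f_\epsilon^{-1})^\top N^1$ is the matching (unnormalised) conormal. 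Substituting and applying the very same computation at $\epsilon=1$ (where $\norm{M_1}=\norm{N^1}=1$ and $f_1=\mathrm{id}$) to identify $\gamma_N^1(q,p)=-\prodscal{\kappa}{N^1}$, I obtain
\[ \gamma_N^\epsilon(q^\epsilon,p^\epsilon)=\frac{\gamma_N^1(q,p)}{\norm{M_\epsilon}\,\norm{f_\epsilon(p)}^2}. \]

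Since $\norm{M_\epsilon}\,\norm{f_\epsilon(p)}^2>0$ for every $\epsilon>0$, this exhibits $\gamma_N^\epsilon(q^\epsilon,p^\epsilon)$ as a positive multiple of the $\epsilon$-independent number $\gamma_N^1(q,p)$; in particular, if it is nonzero for one value of $\epsilon$, then it is nonzero with the same sign for all $\epsilon$, which is the claim. I expect no serious obstacle here: the whole content is the single cancellation above, and the only points requiring care are bookkeeping ones — using the parametrization-free form of the normal curvature (because $f_\epsilon$ does not preserve unit speed), checking that $\prodscal{\kappa}{N^1}$ depends only on $p$ and not on the auxiliary curve $c$, and confirming that the orientations $N^\epsilon$ of Definition~\ref{defNormal} vary continuously from $N^1$, so that the signs really are compared in a consistent frame.
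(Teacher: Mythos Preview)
Your argument is correct and follows essentially the same route as the paper: both take an auxiliary curve through $(q,p)$, push it forward by the linear map $f_\epsilon$, and use that the condition ``$\ddot c\notin T\Sigma$'' is preserved by $f_\epsilon$. The paper stops at the qualitative observation that this condition is $\epsilon$-independent and then invokes continuity in $\epsilon$ to conclude the sign cannot change; you instead carry out the cancellation $\prodscal{f_\epsilon(\kappa)}{M_\epsilon}=\prodscal{\kappa}{N^1}$ explicitly and obtain the exact formula $\gamma_N^\epsilon=\gamma_N^1/(\norm{M_\epsilon}\,\norm{f_\epsilon(p)}^2)$, which makes the continuity step unnecessary and is slightly more informative.
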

\begin{proof}
Let $\Gamma: [-1,1] \to \Sigma$ be any curve such that $(\Gamma(0), \Gamma'(0)) = (q, p)$, and consider $\Gamma^\epsilon = f_\epsilon \circ \Gamma$ for $\epsilon > 0$. Writing $T^\epsilon$ its tangent vector, the assumption implies that $\prodscal{\frac{d T^\epsilon}{dt}}{N^\epsilon}$ is nonzero at $t=0$ for some $\epsilon$, which means that $\frac{dT^\epsilon}{dt} \not\in T_{\Gamma^\epsilon(0)} \Sigma_\epsilon$. Obviously, this property does not depend on $\epsilon$, so $\gamma_N^\epsilon(q^\epsilon, p^\epsilon)$ is nonzero for all $\epsilon$. By continuity, $\gamma_N^\epsilon(q^\epsilon, p^\epsilon)$ does not change sign.
\end{proof}

\begin{lemme} \label{lemmeCourburePositive}
Let $\alpha \in (0, 1)$, and $q_0 \in \Sigma$ such that $\pi(q_0) \in \partial D$. We assume that $N(q_0)$ is directed towards the exterior of the billiard table $D$, and (up to a rotation of axis $e_z$) that $N(q_0) = -e_y$. Then there exists $r > 0$ and $\epsilon_0$ such that for all $\epsilon \leq \epsilon_0$, all $q \in B(f_\epsilon(q_0), r)$, and all $p \in T_{q}^1 \Sigma_\epsilon$: $\gamma_N(q, p) > 0$ whenever $\abs{p_x} \leq \alpha$.
\end{lemme}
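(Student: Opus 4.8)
The plan is to work in a local graph chart near $q_0$ and to show that, after flattening, the normal curvature in any direction with $\abs{p_x}\le\alpha$ is dominated by a single positive term coming from the curvature of the vertical section of $\Sigma$ — exactly the quantity controlled by Assumption~\ref{courbureverticalenonnulle}.

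First I would fix coordinates. Since $N(q_0)=-e_y$, the tangent plane is $T_{q_0}\Sigma=\mathrm{Vect}(e_x,e_z)$, so near $q_0$ the surface $\Sigma$ is a graph $y=\phi(x,z)$ with $\phi_x(x_0,z_0)=\phi_z(x_0,z_0)=0$. The vertical section $z\mapsto(x_0,\phi(x_0,z),z)$ has curvature $\abs{\phi_{zz}(x_0,z_0)}$ at $q_0$, which is nonzero by Assumption~\ref{courbureverticalenonnulle} (the same fact used in Fact~\ref{submersion}). Its \emph{sign} is forced by the geometry: $\pi(\Sigma)=D$ near $\pi(q_0)$, the interior of $D$ lies on the $+e_y$ side (as $N(q_0)=-e_y$ points outward), so $\phi\ge y_0:=\phi(x_0,z_0)$ with a minimum at the rim, whence $\phi_{zz}(x_0,z_0)>0$. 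The flattened surface $\Sigma_\epsilon$ is then the graph $y=\Phi^\epsilon(x,\zeta):=\phi(x,\zeta/\epsilon)$, so that $\Phi^\epsilon_{xx}=\phi_{xx}$, $\Phi^\epsilon_{x\zeta}=\frac1\epsilon\phi_{xz}$ and $\Phi^\epsilon_{\zeta\zeta}=\frac1{\epsilon^2}\phi_{zz}$.

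Next I would write the normal curvature explicitly. Taking $N$ to be the outward normal $\frac1W(\Phi^\epsilon_x,-1,\Phi^\epsilon_\zeta)$ with $W=\sqrt{1+(\Phi^\epsilon_x)^2+(\Phi^\epsilon_\zeta)^2}$, a direct computation of $\gamma_N(q,p)=\prodscal{DN(q)\cdot p}{p}$ for the tangent vector $v=(\dot x,\,\Phi^\epsilon_x\dot x+\Phi^\epsilon_\zeta\dot\zeta,\,\dot\zeta)$ gives $\gamma_N=\frac{Q}{W\norm{v}^2}$, where
\[ Q(\dot x,\dot\zeta)=\phi_{xx}\dot x^2+\frac{2}{\epsilon}\phi_{xz}\dot x\dot\zeta+\frac{1}{\epsilon^2}\phi_{zz}\dot\zeta^2. \]
Since $W>0$, the sign of $\gamma_N$ equals that of $Q$, and the leading coefficient $\frac1{\epsilon^2}\phi_{zz}>0$ makes the mechanism transparent: $Q$ is dominated by its $\dot\zeta^2$ term unless $\dot\zeta$ is extremely small. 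I would then establish the uniformity. Inside $B(f_\epsilon(q_0),r)$ the preimage $(x,z)$ stays $O(\sqrt r)$-close to $(x_0,z_0)$: the ball forces $\abs{\phi-y_0}<r$ and $\abs{x-x_0}<r$, and a quadratic lower bound $\phi-y_0\ge c\,(z-z_0)^2-C\,(x-x_0)^2$ (valid in a fixed neighborhood, whatever the sign of the wall's curvature) then bounds $\abs{z-z_0}$ by $O(\sqrt r)$. Hence $\phi_{zz}\ge c_0>0$ and $\phi_x,\phi_z=O(\sqrt r)$ throughout the ball. I would finish by contraposition: if $Q(\dot x,\dot\zeta)\le0$, then, viewing $Q$ as a quadratic in $\dot\zeta$ with positive leading coefficient, $\dot\zeta$ must lie between its two roots, both of which are $O(\epsilon)\abs{\dot x}$; so $\abs{\dot\zeta}\le C\epsilon\abs{\dot x}$. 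Substituting into $\norm{v}^2=\dot x^2+\left(\phi_x\dot x+\frac1\epsilon\phi_z\dot\zeta\right)^2+\dot\zeta^2$, the cross term is $O(\sqrt r)\abs{\dot x}$ and $\dot\zeta^2=O(\epsilon^2)\dot x^2$, giving $\norm{v}^2=\dot x^2\,(1+O(\sqrt r))$ and $\abs{p_x}=\abs{\dot x}/\norm{v}\ge 1-O(\sqrt r)$. Choosing $r$ small enough that $1-O(\sqrt r)>\alpha$ (possible as $\alpha<1$) yields $\abs{p_x}>\alpha$; equivalently, $\abs{p_x}\le\alpha$ forces $Q>0$, i.e.\ $\gamma_N(q,p)>0$.

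The main obstacle is precisely this last uniform estimate. The statement is genuinely \emph{false} on the unflattened surface — if $\phi_{xx}(x_0,z_0)<0$ then $\gamma_N^1<0$ in directions close to $e_x$ — so one cannot merely invoke Lemma~\ref{lemmeCourbureRestePositive} to transport the sign from $\epsilon=1$ to small $\epsilon$; the flattening is essential. What rescues the argument is that the negative cone of $Q$ has angular width $O(\epsilon)$ about the $e_x$-direction and collapses as $\epsilon\to0$, while the constraint $\abs{p_x}\le\alpha$ keeps $p$ uniformly away from $e_x$; the delicate part is making both ``$O(\epsilon)$'' and ``uniformly away'' quantitative and uniform over the entire ball and all admissible $\epsilon$. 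A minor point to record is that, for $r$ small, the preimage of $B(f_\epsilon(q_0),r)$ meets $\Sigma$ only in the single graph neighborhood of $q_0$ (near the rim, the two sheets of $\Sigma$ over $\mathrm{Int}\,D$ are exactly the two sides of this fold), so the graph computation indeed covers every $q\in\Sigma_\epsilon\cap B(f_\epsilon(q_0),r)$.
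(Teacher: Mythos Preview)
Your argument is correct: working in the graph $y=\phi(x,z)$, the quadratic form $Q$ has leading coefficient $\epsilon^{-2}\phi_{zz}>0$, so its nonpositive cone has angular width $O(\epsilon)$ about the $e_x$-axis, which for small $\epsilon$ lies entirely in $\{\abs{p_x}>\alpha\}$. The only place that requires slight care is your control of the preimage of $B(f_\epsilon(q_0),r)$ in the $(x,z)$-chart: the constraint $\epsilon\abs{z-z_0}<r$ alone does not bound $z$, so you must, as you indicate, use $\abs{\phi-y_0}<r$ together with the quadratic lower bound and the fact that the preimage stays in a single fold to conclude $\abs{z-z_0}=O(\sqrt r)$. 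Once this is in place everything goes through.

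The paper takes a genuinely different, coordinate-free route, and it is worth noting that it does so precisely via Lemma~\ref{lemmeCourbureRestePositive}, which you dismissed. The trick is not to transport the full cone $\{\abs{p_x}\le\alpha\}$ from $\epsilon=1$, which indeed fails, but to start from a \emph{narrow} cone $W_\eta=\{p\in T^1_{q_0}\Sigma:\abs{p_x}\le\eta\}$ on which $\gamma_N^1>0$ by continuity from the single direction $e_z$. One then computes how unit vectors transform under $(f_\epsilon)_*$: since $T_{q_0}\Sigma=\mathrm{Vect}(e_x,e_z)$, a unit $p^\epsilon\in T^1_{f_\epsilon(q_0)}\Sigma_\epsilon$ with $\abs{p^\epsilon_x}\le\alpha$ pulls back to a $p$ with $\abs{p_x}\le \alpha\epsilon/\sqrt{1-\alpha^2}$, hence $p\in W_\eta$ once $\epsilon$ is small. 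Lemma~\ref{lemmeCourbureRestePositive} then carries positivity across, first to the fixed $\epsilon_0$ (where one enlarges to a spatial neighborhood by continuity of $\gamma_N^{\epsilon_0}$), and then back to all $\epsilon\le\epsilon_0$. Your explicit computation effectively re-proves this transformation law through the roots of $Q$; the paper's version is shorter and avoids the chart bookkeeping, while yours gives more quantitative information (for instance the $O(\epsilon)$ width of the bad cone) that could be useful if one wanted explicit constants later.
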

\begin{proof}
By Assumption~\ref{courbureverticalenonnulle} of Theorem~\ref{mainThm}, we know that $\gamma_N(q_0, e_z) > 0$. We define $W_\eta = \setof{ p \in T_{q_0}^1\Sigma}{\abs{p_x} \leq \eta}$. By continuity of $\gamma_N$, there exists $\eta > 0$ such that, for all $p \in W_\eta$, $\gamma_N(q_0, p) > 0$.

Write $W_\eta^\epsilon = (f_\epsilon)_* (W_\eta)$. Notice that for all $p \in T^1_{q_0} \Sigma$, writing $(f_\epsilon)_*(q_0, p) = (f_\epsilon(q_0), p^\epsilon)$:
\[ p^\epsilon_x = \frac{p_x}{\sqrt{p_x^2 + \epsilon^2 p_z^2}} \quad \text{and} \quad p^\epsilon_z = \frac{\epsilon p_z}{\sqrt{p_x^2 + \epsilon^2 p_z^2}}. \]
Thus for a small enough $\epsilon$, $W_\eta^\epsilon$ contains all $p \in T_{q_0}^1\Sigma_\epsilon$ for which $\abs{p_x} \leq \alpha$. We denote such an $\epsilon$ by $\epsilon_0$. By Lemma~\ref{lemmeCourbureRestePositive}, $\gamma_N > 0$ on $W_\eta^{\epsilon_0}$. Again by continuity, the property $\gamma_N > 0$ extends to a small neighborhood of the form $\setof{(q, p) \in T^1\Sigma_{\epsilon_0}}{ q \in B(q_0, r'), ~ \abs{p_x} \leq \alpha}$ for some $r' > 0$.

Finally, we use Lemma~\ref{lemmeCourbureRestePositive} once again, which proves that there exists $r > 0$ such that for all $\epsilon \in (0, \epsilon_0)$, $\gamma_N > 0$ on $\setof{(q, p) \in T^1\Sigma_\epsilon}{ q \in B(q_0, r), ~ \abs{p_x} \leq \alpha}$.
\end{proof}

\begin{prop} \label{propCourbure}

Choose $q_0 \in \Sigma$ such that $\pi(q_0) \in \partial D$, and assume that $N(q_0)$ is directed towards the exterior of the billiard table $D$, and (up to a rotation of axis $e_z$) that $N(q_0) = -e_y$. Write $q_0^\epsilon = f_\epsilon(q_0)$.

Then for all $\alpha \in (0, 1)$, there exists $r_0 > 0$ such that for all $r \leq r_0$ and for all $\nu \in (0,1)$:
\begin{equation} \label{eq2} \inf_{\substack{q \in V_\nu \cap B(q_0^\epsilon, r) \\ p \in T_q^1 \Sigma_\epsilon, ~ \abs{p_x} \leq \alpha}} \gamma_N^\epsilon \left(q, p \right) \underset{\epsilon \to 0}{\to} +\infty. \end{equation}

Moreover, under the additional assumption that the curvature of $\partial D$ is negative at $\pi(q_0)$, there exists $r_0 > 0$ such that for all $r \leq r_0$ and for all $\nu \in (0,1)$:
\begin{equation} \label{eq3} \limsup_{\epsilon \to 0} \sup_{\substack{q \in V_\nu \cap B(q_0^\epsilon, r)}} \gamma_-^\epsilon(q) < 0. \end{equation}
\end{prop}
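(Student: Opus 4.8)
The plan is to reduce everything to a single graph chart adapted to $q_0$, read off how the second fundamental form scales in $\epsilon$, and exploit that near the boundary exactly one principal curvature blows up like $\epsilon^{-2}$ while the other stays bounded.

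Since $N(q_0)=-e_y$, we have $T_{q_0}\Sigma=\mathrm{Vect}(e_x,e_z)$, so near $q_0$ I would write $\Sigma$ as a graph $y=\phi(x,w)$, where $w$ is the vertical coordinate on $\Sigma$, with $q_0$ sitting over $(x_0,w_0)$ and $\phi_x(x_0,w_0)=\phi_w(x_0,w_0)=0$. Flattening amounts to the substitution $w=z/\epsilon$, so $\Sigma_\epsilon$ is the graph $y=\phi_\epsilon(x,z):=\phi(x,z/\epsilon)$, and the derivatives scale as $\partial_x\phi_\epsilon=\phi_x$, $\partial_z\phi_\epsilon=\epsilon^{-1}\phi_w$, $\partial_{xx}\phi_\epsilon=\phi_{xx}$, $\partial_{xz}\phi_\epsilon=\epsilon^{-1}\phi_{xw}$, $\partial_{zz}\phi_\epsilon=\epsilon^{-2}\phi_{ww}$ (the $\phi$-derivatives being evaluated at $(x,z/\epsilon)$). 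In this chart Assumption~\ref{courbureverticalenonnulle} reads $\phi_{ww}\neq 0$ on $\pi^{-1}(\partial D)$, and Lemma~\ref{lemmeCourburePositive} will be what pins its sign.

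For a unit tangent vector $p$, writing $a=p_x$ and $b$ for its component along the vertical coordinate tangent field, the normal curvature is
\[ \gamma_N^\epsilon(q,p)=\frac{a^2\phi_{xx}+2ab\,\epsilon^{-1}\phi_{xw}+b^2\epsilon^{-2}\phi_{ww}}{\sqrt{D}},\qquad D=1+\phi_x^2+\epsilon^{-2}\phi_w^2, \]
subject to the unit-length relation. On $V_\nu$ the condition $\abs{N_z^\epsilon}<1-\nu$ translates, through the explicit formula relating $N^\epsilon$ to $H=N_z^1$, into $\abs{H}=O_\nu(\epsilon)$, hence $\phi_w=O(\epsilon)$; thus $\epsilon^{-1}\phi_w$ is bounded and $D$ is bounded above and below. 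Since $\abs{p_x}=\abs{a}\le\alpha<1$, the length relation forces $b^2$ to stay above a positive constant. The term $b^2\epsilon^{-2}\phi_{ww}/\sqrt D$ therefore dominates, the remaining terms being $O(\epsilon^{-1})$.

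The \textbf{main obstacle} is that $f_\epsilon^{-1}(B(q_0^\epsilon,r))$ is a strip in $\Sigma$ of vertical extent $\sim r/\epsilon\to\infty$, so a priori the preimage of a point of $V_\nu\cap B(q_0^\epsilon,r)$ need not stay near $q_0$ and the sign of $\phi_{ww}$ might degenerate. I would resolve this by compactness. If \eqref{eq2} failed, there would be $\epsilon_n\to0$, points $q_n\in V_\nu\cap B(q_0^{\epsilon_n},r)$ and directions $p_n$ with $\abs{p_{n,x}}\le\alpha$ along which $\gamma_N^{\epsilon_n}$ stays bounded. Their preimages $q_n'$ lie in a fixed $(x,y)$-box around $\pi(q_0)$ and satisfy $H(q_n')=O(\epsilon_n)\to0$, so after extraction $q_n'\to q_\infty\in\pi^{-1}(\partial D)$. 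Assumption~\ref{courbureverticalenonnulle} gives $\phi_{ww}(q_\infty)\neq0$; applying Lemma~\ref{lemmeCourburePositive} along the purely vertical direction (for which $p_x=0\le\alpha$) at each $q_n$ yields $\gamma_N^{\epsilon_n}>0$, i.e.\ $\phi_{ww}(q_n')>0$, whence $\phi_{ww}(q_\infty)>0$ and $\phi_{ww}(q_n')\ge\tfrac12\phi_{ww}(q_\infty)>0$ for large $n$. By the scaling above this forces $\gamma_N^{\epsilon_n}(q_n,p_n)\to+\infty$, a contradiction, which proves \eqref{eq2}. The point of invoking Lemma~\ref{lemmeCourburePositive} is precisely that it fixes the sign of $\phi_{ww}$ uniformly on the whole ball, sparing me any explicit control of the strip.

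For \eqref{eq3} I would compute the Gaussian and mean curvatures of the graph, $K^\epsilon=\epsilon^{-2}(\phi_{xx}\phi_{ww}-\phi_{xw}^2)/D^2$ and $\mathrm{tr}\,S^\epsilon=\epsilon^{-2}(1+\phi_x^2)\phi_{ww}/D^{3/2}+O(\epsilon^{-1})$. Since the larger principal curvature satisfies $\gamma_+^\epsilon\sim\mathrm{tr}\,S^\epsilon\to+\infty$, the smaller one is $\gamma_-^\epsilon=K^\epsilon/\gamma_+^\epsilon$, which stays bounded and tends to $(\phi_{xx}\phi_{ww}-\phi_{xw}^2)/\big((1+\phi_x^2)\phi_{ww}\sqrt D\big)$. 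Parametrizing $\partial D$ as the projection of $\{\phi_w=0\}$ shows that its signed curvature equals $(\phi_{xx}\phi_{ww}-\phi_{xw}^2)/\phi_{ww}$ at $\pi(q_0)$, and indeed $\gamma_-^\epsilon(q_0^\epsilon)$ converges to it. Hence negative curvature of $\partial D$ means $\phi_{xx}\phi_{ww}-\phi_{xw}^2<0$, so $\gamma_-^\epsilon$ is bounded away from $0$ and negative; the same compactness argument as above, now also using continuity of the wall curvature, upgrades this to $\limsup_\epsilon\sup_q\gamma_-^\epsilon<0$ and yields \eqref{eq3}.
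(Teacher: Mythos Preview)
Your approach is correct and takes a genuinely different route from the paper's. For \eqref{eq2} the paper works geometrically: it intersects $\Sigma$ with planes parallel to $e_x^\perp$, applies an ad hoc ellipse computation (Lemma~\ref{lemmeCourbureCercle}) to make the curvature of these plane sections blow up on $V_\nu$, and then exploits the fact that $\gamma_N(q,\cdot)$ is a quadratic form together with Lemma~\ref{lemmeCourburePositive} (invoked at the intermediate threshold $\frac{\alpha+1}{2}$) to pass from the single direction $p_x=0$ to the full cone $\abs{p_x}\le\alpha$. For \eqref{eq3} the paper looks at the level curves of $H$, shows that their projections approximate $\partial D$ and hence inherit its curvature, and concludes via Equation~\eqref{eqk2}. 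You instead put everything in one graph chart and read the $\epsilon$-scaling of the Hessian directly: the term $b^2\epsilon^{-2}\phi_{ww}/\sqrt{D}$ does the work of the ellipse lemma, and your identification $\gamma_-^\epsilon\approx K^\epsilon/\gamma_+^\epsilon\to(\phi_{xx}\phi_{ww}-\phi_{xw}^2)/\phi_{ww}=\text{curvature of }\partial D$ is a clean closed form the paper never states explicitly. Your argument is more computational but also more direct; the paper's is more coordinate-free and reuses its ellipse lemma as a modular black box.

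One point deserves tightening. In your compactness step, the limit $q_\infty\in\pi^{-1}(\partial D)\cap\Sigma$ need not lie in the domain of the graph chart you set up at $q_0$: as you yourself observe, $f_\epsilon^{-1}(B(q_0^\epsilon,r))$ is a vertical strip of height $\sim r/\epsilon$, so $q_\infty$ can sit on a different sheet of $\Sigma$ over the same $(x,y)$-box. Writing ``$\phi_{ww}(q_\infty)$'' then has no meaning. The fix is immediate --- set up a fresh graph chart at $q_\infty$ (possible since $N_z(q_\infty)=0$ and, for small $r$, $N(q_\infty)$ is close to $\pm e_y$) and rerun your scaling analysis there --- and Lemma~\ref{lemmeCourburePositive} still supplies the sign of the vertical curvature because it is stated on the ambient ball $B(q_0^\epsilon,r)$, which contains every $q_n$. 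Your claim that the unit-length relation forces $b^2$ away from zero likewise needs $\phi_x$ small, which holds in the chart centered at $q_\infty$ but not a priori in the one at $q_0$.
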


To prove this proposition, we first prove a $2$-dimensional version in a particular case:

\begin{lemme} \label{lemmeCourbureCercle}
For all $\epsilon > 0$ consider the ellipse
\[ \mathcal E_\epsilon = \setof{(y, z) \in \mathbb R^2}{y^2 + \frac{z^2}{\epsilon^2} = 1}. \]
Define $N^\epsilon(q)$ as the unit normal vector of the ellipse at $q \in \mathcal E_\epsilon$, pointing towards the interior, and let
\[ W_\nu = \setof{z \in \mathcal E_\epsilon}{ \abs{N_z^\epsilon} < 1 - \nu }. \]
Then for all $\nu \in (0,1)$, if $K(q)$ denotes the curvature of $\mathcal E_\epsilon$ at $q$:
\[ \inf_{q \in W_\nu} K(q) \underset{\epsilon \to 0}{\to} +\infty. \]
\end{lemme}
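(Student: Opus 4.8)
The plan is to work with an explicit trigonometric parametrization of the ellipse, which makes both the curvature and the vertical component of the normal completely transparent. Write $q(t) = (\cos t, \epsilon \sin t)$, so that $\dot q = (-\sin t, \epsilon \cos t)$ and $\ddot q = (-\cos t, -\epsilon \sin t)$. The standard plane-curve formula $K = \abs{\dot y \ddot z - \dot z \ddot y}/(\dot y^2 + \dot z^2)^{3/2}$ then gives, after the numerator collapses to the constant $\epsilon$,
\[ K(q(t)) = \frac{\epsilon}{\left(\sin^2 t + \epsilon^2 \cos^2 t\right)^{3/2}}. \]
In parallel, the inward unit normal is $\pm(\epsilon \cos t, \sin t)/\sqrt{\sin^2 t + \epsilon^2 \cos^2 t}$, so that
\[ \left(N_z^\epsilon\right)^2 = \frac{\sin^2 t}{\sin^2 t + \epsilon^2 \cos^2 t}. \]
Note that the same quantity $\sin^2 t + \epsilon^2 \cos^2 t$ controls both expressions, which is exactly what couples the hypothesis on $N_z$ to the conclusion on $K$.

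The second step is to translate the defining constraint of $W_\nu$, namely $\abs{N_z^\epsilon} < 1 - \nu$, into a bound confining $q$ to the neighbourhood of the two tips $(\pm 1, 0)$. Squaring and clearing the (positive) denominator, the inequality $(N_z^\epsilon)^2 < (1-\nu)^2$ becomes $\sin^2 t \,(1 - (1-\nu)^2) < (1-\nu)^2 \epsilon^2 \cos^2 t$, and since $\cos^2 t \leq 1$ this yields
\[ \sin^2 t < C_\nu \, \epsilon^2, \qquad C_\nu := \frac{(1-\nu)^2}{\nu(2-\nu)}, \]
where $C_\nu$ depends only on $\nu$. In other words, on $W_\nu$ the quantity $\sin^2 t$ is forced to be of order $\epsilon^2$: the condition that the normal is not nearly vertical is precisely the condition that we stay near the sharp tips of the flattened ellipse.

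The conclusion is then immediate. On $W_\nu$ the common denominator is bounded above by $\sin^2 t + \epsilon^2 \cos^2 t \leq (C_\nu + 1)\epsilon^2$, so
\[ K(q) \geq \frac{\epsilon}{\left((C_\nu+1)\epsilon^2\right)^{3/2}} = \frac{1}{(C_\nu+1)^{3/2}}\,\frac{1}{\epsilon^2}, \]
and taking the infimum over $q \in W_\nu$ gives a lower bound tending to $+\infty$ as $\epsilon \to 0$. There is no genuine obstacle here: the entire content is the single algebraic observation that the hypothesis $\abs{N_z^\epsilon} < 1-\nu$ pins $\sin^2 t$ to size $O(\epsilon^2)$, after which the curvature of order $\epsilon^{-2}$ falls out. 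The only point requiring a little care is to make the dependence of the threshold $C_\nu$ explicit in $\nu$ alone (and uniform in $\epsilon$), so that the estimate degrades gracefully as $\nu \to 0$ but still blows up for each fixed $\nu$; this is exactly the form in which Proposition~\ref{propCourbure} will later consume the lemma.
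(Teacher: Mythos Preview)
Your proof is correct and follows essentially the same approach as the paper: the same parametrization $t\mapsto(\cos t,\epsilon\sin t)$, the same curvature and normal formulae, and the same translation of $|N_z^\epsilon|<1-\nu$ into a bound of size $O(\epsilon^2)$ on $\sin^2 t$ (you bound $\sin^2 t$ directly via $\cos^2 t\le 1$, the paper equivalently bounds $\tan^2 t$). Since $C_\nu+1=1/(\nu(2-\nu))$, your lower bound $K\ge (C_\nu+1)^{-3/2}\epsilon^{-2}$ is in fact identical to the paper's $(\nu(2-\nu))^{3/2}/\epsilon^2$.
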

\begin{proof}
We parametrize $\mathcal E_\epsilon$ by:
\[ t \mapsto \begin{pmatrix} \cos t  \\ \epsilon \sin t \end{pmatrix}. \]
Then the curvature is
\[ K(t) = \frac{\epsilon}{(\epsilon^2 \cos^2 t + \sin^2 t)^{3/2}} \]
while the unit normal vector is
\[ \begin{pmatrix}- \epsilon \cos t \\ - \sin t \end{pmatrix} \frac{1}{\sqrt{\epsilon^2 \cos^2 t + \sin^2 t}}. \]

If $\abs{N_z} < 1 - \nu$, then $\sin^2 t \geq (1 - \nu)^2 (\epsilon^2 \cos^2 t + \sin^2 t)$, whence $\tan^2 t \leq \frac{\epsilon^2 (1 - \nu)^2}{\nu(2-\nu)}$.

Therefore,
\[ K = \frac{\epsilon / \abs{cos^3 t}}{(\epsilon^2 + \tan^2 t)^{3/2}} \geq \frac{\epsilon}{\left(\epsilon^2 + \frac{\epsilon^2 (1 - \nu)^2}{\nu(2-\nu)}\right)^{3/2}} = \frac{(\nu(2-\nu))^{3/2}}{\epsilon^2} \]
which tends to $+\infty$ as $\epsilon \to 0$.
\end{proof}

\begin{proof}[Proof of Proposition~\ref{propCourbure}]
For each $q \in B(q, r)$, consider the curve $\Gamma_q$ resulting from the intersection of $\Sigma$ with the affine plane $(q, e_y, e_z)$ and the associated normal vector $n$.

With the notations of Definition~\ref{darboux}, let us show that we may choose $r$ small enough for $\abs{\prodscal{N}{T \wedge n}}$ to remain bounded away from $1$ for all small $\epsilon$ and all $q \in B(q, r)$. Since $N_x^1(q_0) = 0$, we may choose $r$ such that $N_x$ remains close to $0$ for $\epsilon = 1$. We know that $N_x$ decreases as $\epsilon$ decreases to $0$, so $N_x$ remains close to $0$ when $\epsilon \to 0$. Since $T \wedge n$ is colinear to $e_x$, this implies that $\abs{\prodscal{N}{T \wedge n}}$ remains close to $0$.

Now, let $\mathcal C$ be a circle tangent up to order $2$ to $\Sigma_1$ at $q$, parallel to $e_x^\bot$: the existence of such a circle is guaranteed, for a small enough $r$, by Assumption~\ref{courbureverticalenonnulle} of the theorem. This circle gives birth to a family $\mathcal E_\epsilon = f_\epsilon(\mathcal C)$ of ellipses which are tangent to $\Sigma_\epsilon$ at $f_\epsilon(q)$ up to order $2$. Lemma~\ref{lemmeCourbureCercle} tells us that as $\epsilon$ decreases to $0$, the curvature of $\mathcal E_\epsilon$ at $f_\epsilon(q)$ (which is the same as the curvature of $f_\epsilon(\Gamma_q)$ at $f_\epsilon(q)$) tends to infinity as long as $q \in V_\nu$, uniformly with respect to $q$. Together with Equation~\ref{eqk2}, this proves that
\[ \inf_{\substack{q \in V_\nu \cap B(q_0^\epsilon, r) \\ p \in T_q^1 \Sigma_\epsilon, ~ p_x = 0}} \gamma_N^\epsilon \left(q, p \right) \underset{\epsilon \to 0}{\to} +\infty. \]

To prove~(\ref{eq2}), let $\alpha \in (0, 1)$. Lemma~\ref{lemmeCourburePositive} applied to $q_0$ and $\frac{\alpha + 1}{2}$ gives us some $r_0$ and $\epsilon_0$ such that for all $q \in B(q_0, r_0)$ and all $p \in T_q^1 \Sigma_\epsilon$ such that $\abs{p_x} \leq \frac{\alpha+1}{2}$, $\gamma_N(q, p) > 0$. Since $\gamma_N(q, \cdot)$ is a quadratic form on the tangent space $T_q\Sigma_\epsilon$, which takes uniformly large values for $p \in T_q^1\Sigma_\epsilon \cap (e_x)^\bot$, we deduce that it also takes uniformly large values for $\abs{p_x} \leq \alpha$.

Finally, we prove~(\ref{eq3}): consider $q \in B(q_0^\epsilon, r) \cap V_\nu$, and $\Gamma$ a parametrization by arclength of $\setof{q' \in B(q_0, r) \cap V_\nu}{H(q') = H(q)}$. Since $H|_{Z_\delta}$ is a submersion (for any small enough $\delta$), the curvature of the curve $\pi \circ \Gamma$ is close to the curvature of $\partial D$ near $\pi(q)$, which is bounded away from zero. Moreover, the unit tangent vector of $\Gamma$ is bounded away from $e_z$ because of Assumption~\ref{courbureverticalenonnulle}, so the speed of $\pi \circ \Gamma$ is bounded away from zero, which implies that the curvature of $\Gamma$ itself is bounded away from $0$, uniformly with respect to $\epsilon$ and $q$. Moreover, $\prodscal{e_z}{T \wedge n}$ tends uniformly to $1$ as $\epsilon$ tends to $0$, so $\prodscal{N}{T \wedge n}$ is bounded away from $1$ in $V_\nu^\epsilon$. With Equation~\ref{eqk2}, this completes the proof of~(\ref{eq3}).
\end{proof}

As a direct consequence of Lemma~\ref{lemmeCourburePositive} and Proposition~\ref{propCourbure}, we obtain:
\begin{fact} \label{factCurvature}
If the walls of $D$ are negatively curved, then for any small enough $\delta$, the Gaussian curvature of $\Sigma_\epsilon$ in $Z_\delta$ is negative.
\end{fact}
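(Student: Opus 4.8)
The plan is to read off the sign of $K^\epsilon = \gamma_+^\epsilon \gamma_-^\epsilon$ from separate sign estimates on the two principal curvatures, and to globalize by compactness. First I would reduce to the local situation of Lemma~\ref{lemmeCourburePositive} and Proposition~\ref{propCourbure}. Since $\pi^{-1}(\partial D) \cap \Sigma$ is compact and each of its points can, after a rotation about the $e_z$-axis, be brought to the normalized position $N(q_0) = -e_y$, I would cover it by finitely many balls $B(q_0^\epsilon, r)$ on which both results apply. For $\delta$ small enough, $Z_\delta$ is a neighborhood of $\pi^{-1}(\partial D)$ contained in the union of these balls: on the original surface $H$ vanishes exactly on $\pi^{-1}(\partial D)$ and is a submersion there (Fact~\ref{submersion}), so $\{\abs{H} \le \delta\}$ shrinks onto the boundary as $\delta \to 0$. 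It therefore suffices to prove $K^\epsilon < 0$ inside each ball.

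Within one such ball the positive factor is the easy one. Lemma~\ref{lemmeCourburePositive} produces a direction $p$ with $p_x = 0$ (hence $\abs{p_x} \le \alpha$) along which $\gamma_N^\epsilon(q,p) > 0$, and this holds on the whole ball $B(q_0^\epsilon, r)$, not merely on $V_\nu$. Since the maximal principal curvature dominates every normal curvature, $\gamma_+^\epsilon(q) > 0$ throughout $Z_\delta \cap B(q_0^\epsilon, r)$. Consequently $\operatorname{sign} K^\epsilon(q) = \operatorname{sign} \gamma_-^\epsilon(q)$, and the whole problem reduces to showing that the minimal principal curvature stays \emph{negative} on $Z_\delta$.

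For the negative factor I would invoke the hypothesis that the walls are negatively curved. On the thin region $V_\nu$ this is exactly Equation~\ref{eq3}, which gives $\gamma_-^\epsilon < 0$, bounded away from $0$. On the remaining part $Z_\delta \setminus V_\nu$ the normal is nearly vertical and the surface flattens, so the principal curvatures become small and Equation~\ref{eq3} no longer supplies a quantitative bound; but there only the \emph{sign} of $\gamma_-^\epsilon$ is needed. I would extract it from the along-wall direction: parametrizing the level set $\{H = H(q)\}$ by arclength as in the proof of Equation~\ref{eq3}, its projection is a curve close to $\partial D$, hence negatively curved, and $\gamma_N^\epsilon = k\,\prodscal{N}{n}$ (Equation~\ref{eqk}) inherits, by continuity from the boundary where the sign is manifest, the sign dictated by the negative curvature of $\partial D$. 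Thus $\gamma_-^\epsilon(q) \le \gamma_N^\epsilon(q, p) < 0$ for the along-wall direction $p$, on all of $Z_\delta$, and $K^\epsilon = \gamma_+^\epsilon \gamma_-^\epsilon < 0$ there.

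The hard part is precisely this last step, the passage from $V_\nu$ to the full shell $Z_\delta$. Proposition~\ref{propCourbure} is stated on $V_\nu$, whose width is of order $\epsilon$, whereas $Z_\delta$ has fixed width and its outer part is nearly flat, so $\gamma_-^\epsilon$ degenerates to $0$ and one cannot simply quote the $\limsup < 0$ bound. The delicate point is to control the \emph{sign} (not the size) of the along-wall normal curvature uniformly on $Z_\delta \setminus V_\nu$, guaranteeing that $\gamma_-^\epsilon$ never crosses $0$ before the metric becomes flat outside $Z_\delta$. Once this sign is secured, the statement is the announced direct consequence of Lemma~\ref{lemmeCourburePositive} and Proposition~\ref{propCourbure}.
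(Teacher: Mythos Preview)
Your decomposition is exactly right and matches what the paper has in mind: $\gamma_+^\epsilon>0$ on a ball from Lemma~\ref{lemmeCourburePositive}, $\gamma_-^\epsilon<0$ on $V_\nu$ from Equation~\ref{eq3}, and a compactness globalization. The paper gives no further argument beyond calling this a ``direct consequence,'' so you are filling in the one nontrivial step the paper skips, namely the passage from $V_\nu$ to $Z_\delta$.

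That step, however, is where your write-up is not yet a proof. Your sentence ``by continuity from the boundary where the sign is manifest'' does not control how large a neighborhood of $\{H=0\}$ you get, and in particular does not show that this neighborhood contains $Z_\delta$ \emph{uniformly in $\epsilon$}. On $Z_\delta\setminus V_\nu$ the normal is nearly vertical and the along-wall normal curvature degenerates to~$0$; a continuity argument in $q$ carried out on $\Sigma_\epsilon$ might well give a neighborhood that shrinks as $\epsilon\to 0$. So as written there is a genuine gap.

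The clean way to close it is to use Lemma~\ref{lemmeCourbureRestePositive}, which you do not invoke: for each fixed $(q,p)\in T^1\Sigma$, the sign of $\gamma_N^\epsilon\bigl((f_\epsilon)_*(q,p)\bigr)$ is independent of $\epsilon$, hence the signature of the second fundamental form, and therefore the sign of $K^\epsilon$ at $f_\epsilon(q)$, is independent of $\epsilon$. Now fix $\nu$, pick one $\epsilon_1$ small enough that Lemma~\ref{lemmeCourburePositive} and Equation~\ref{eq3} give $K^{\epsilon_1}<0$ on $V_\nu^{\epsilon_1}\cap B(q_0^{\epsilon_1},r)$, and observe from the formulae in Definition~\ref{defNormal} that $f_{\epsilon_1}^{-1}(V_\nu^{\epsilon_1})$ is a set of the form $\{\abs{H}<c(\nu,\epsilon_1)\}$. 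Choosing $\delta<c(\nu,\epsilon_1)$ forces $Z_\delta^{\epsilon_1}\subseteq V_\nu^{\epsilon_1}$, so $K^{\epsilon_1}<0$ on $Z_\delta^{\epsilon_1}$; by the sign-independence, $K^\epsilon<0$ on $Z_\delta^\epsilon$ for every $\epsilon>0$. This is presumably the ``direct consequence'' the paper intends, and it replaces your continuity step by a one-line transfer via Lemma~\ref{lemmeCourbureRestePositive}.
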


In the following proposition, which is crucial for both Theorems~\ref{mainThm} and~\ref{thmAnosov}, we examine the nongrazing collisions with the walls of the billiards.

\begin{prop} \label{nonGrazing}
Consider a geodesic $(q^\epsilon(t), p^\epsilon(t))_{t \in \mathbb R}$ in $\Sigma_\epsilon$, for some $\epsilon > 0$. Denote by $t_c$ the time of the first bounce of the billiard trajectory starting from $\pi_*(q^\epsilon(0), p^\epsilon(0))$ (assume such a $t_c$ exists), let $(q^0(t), p^0(t))$ be the (unique) pullback of this trajectory by $\pi$ in $\Sigma_\epsilon$ for $t \in [0, t_c]$, and let $q_c = q^0(t_c)$. Assume that $N(q_c)$ is directed towards the exterior of the billiard table $D$, and (up to a rotation of axis $e_z$) that $N(q_c) = -e_y$. For any sufficiently small $\epsilon$, the trajectory outside $Z_\delta$ is close to the billiard trajectory, so the geodesic enters $Z_\delta$ at a time $t^\epsilon_\mathrm{in}$ close to $t_c$.

With these notations, for all $m > 0$ and $\alpha \in (0,1)$, there exist $l < 0$ and $\delta_0 > 0$, such that for all $\delta \leq \delta_0$, there exists $\epsilon_0 > 0$, such that for all $\epsilon \leq \epsilon_0$ and each geodesic $(q^\epsilon, p^\epsilon)$ in $\Sigma_\epsilon$ as above such that $\abs{p_x^\epsilon(0)} < \alpha$:
\begin{enumerate}
\item \label{ng1} $\sup_{t \in [0, t^\epsilon_\mathrm{in} + \sqrt{\delta}]} \abs{p^\epsilon_x(t) - p^\epsilon_x(0)} \leq m$;
\item \label{ng2a} the geodesic $(q^\epsilon(t), p^\epsilon(t))$ exits $Z_\delta$ at a time $t_\mathrm{out} \leq t_\mathrm{in} +  \sqrt \delta$;
\item \label{ng2b} if the curvature of $\partial D$ is negative everywhere, and if $q^\epsilon(0) \not\in Z_\delta$, then \[ \int_{t^\epsilon_\mathrm{in}}^{t^\epsilon_\mathrm{out}} K(q^\epsilon(t)) \mathrm dt \leq l. \]
\end{enumerate}
In particular, the choice of the constants $l$, $\delta_0$ and $\epsilon_0$ does not depend on the choice of the geodesic $(q^\epsilon, p^\epsilon)$.
\end{prop}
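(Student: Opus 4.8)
The plan is to derive all three conclusions from the single geodesic equation~\eqref{eqDerivees}, $\dot p = -N(q)\,\gamma_N(q,p)$, fed by the curvature estimates of Proposition~\ref{propCourbure}. Throughout I work in the given coordinates, where $N(q_c)=-e_y$ so that near the collision the outward normal is nearly $-e_y$: shrinking the radius $r$ and then $\epsilon$, I may assume $\abs{N_x^\epsilon}$ is as small as I like and $N_y^\epsilon$ stays close to $-1$ on $B(q_c^\epsilon,r)$. Since $\partial D$ is compact, it is covered by finitely many such neighbourhoods, and taking the worst constants over this cover is what makes the final $l,\delta_0,\epsilon_0$ uniform over all geodesics.

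The basic fact is that the total normal curvature accumulated during a collision is bounded. Writing the $y$-component of~\eqref{eqDerivees} gives $\dot p_y=-N_y\gamma_N=\abs{N_y}\gamma_N$, and the estimate~\eqref{eq2} of Proposition~\ref{propCourbure} guarantees $\gamma_N>0$ as long as $\abs{p_x}\le\tfrac{\alpha+1}{2}$. As $N_y<0$ keeps a fixed sign and $p_y\in[-1,1]$, integrating yields $\int\gamma_N\,dt\le \Delta p_y/\inf\abs{N_y}\le C$. The $x$-component $\dot p_x=-N_x\gamma_N$ is then controlled by $\abs{p_x(t)-p_x(0)}\le \sup\abs{N_x^\epsilon}\int\gamma_N\,dt\le C\sup\abs{N_x^\epsilon}$, which is $\le m$ once $r$ and $\epsilon$ are small: this is conclusion~(\ref{ng1}). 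I would organise it as a continuity/bootstrap argument, \emph{assuming} $\abs{p_x}<\tfrac{\alpha+1}{2}$ on the interval to legitimise $\gamma_N>0$, then recovering that $p_x$ barely moves, so the assumption never saturates.

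For the exit time~(\ref{ng2a}), I compare the motion in the plane $(e_y,e_z)$ to the family of flattening ellipses of Lemma~\ref{lemmeCourbureCercle}. Because $\abs{p_x}<\alpha$, the velocity keeps a component of size $\ge\sqrt{1-\alpha^2}$ in the $(y,z)$-plane, so the geodesic crosses $Z_\delta$ essentially along such an ellipse; the arc length of the portion with $\abs{H}=\abs{N_z^1}\le\delta$ is of order $\delta^2$ for $\epsilon\ll\delta$, hence far below $\sqrt\delta$. Equivalently, $\ddot y=\abs{N_y}\gamma_N$ is bounded below by a quantity blowing up like $1/\epsilon^2$ on $V_\nu$, so $p_y$ flips sign (the reflection) in a time tending to $0$, and the bounded $x$-drift cannot block the exit since $\abs{p_x}$ stays below $\alpha$. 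Choosing $\epsilon_0$ small gives transit time $\le\sqrt\delta$. For~(\ref{ng2b}) I then combine the principal curvatures: by Euler's formula $\gamma_N=\gamma_+p_+^2+\gamma_-p_-^2$ with the steep principal direction nearly vertical and the flat one nearly along the wall, so $p_+^2\ge 1-\alpha^2$ and $\int\gamma_+\,dt\ge\int\gamma_N\,dt-O(\sqrt\delta)\ge\Delta p_y-O(\sqrt\delta)$, a fixed positive constant because a nongrazing collision flips $p_y$ between values of modulus $\ge\sqrt{1-\alpha^2}$. Meanwhile~\eqref{eq3} of Proposition~\ref{propCourbure} gives $\gamma_-\le -c<0$ on $V_\nu$, and Fact~\ref{factCurvature} ensures $K<0$ throughout $Z_\delta$. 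Writing $K=\gamma_+\gamma_-\le -c\,\gamma_+$ on the bulk of the crossing, and treating the thin part $Z_\delta\setminus V_\nu$ (where $\gamma_N$ is $O(1)$ over a time $O(\sqrt\delta)$) as negligible, I obtain $\int_{t^\epsilon_\mathrm{in}}^{t^\epsilon_\mathrm{out}}K\,dt\le -c\,\Delta p_y+o(1)\le l<0$ uniformly.

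The main obstacle is conclusion~(\ref{ng2a}). Parts~(\ref{ng1}) and~(\ref{ng2b}) are almost algebraic once one knows that $\int\gamma_N\,dt$ is pinned between two positive constants, but that very fact presupposes the geodesic crosses $Z_\delta$ cleanly, flipping $p_y$, rather than ``surfing'' the wall — the grazing phenomenon of Figure~\ref{figFake}. Ruling this out needs the transit-time estimate together with the bootstrap keeping $\abs{p_x}<\alpha$, so that the hypothesis $\abs{p_x^\epsilon(0)}<\alpha$ and the blow-up of $\gamma_N$ on $V_\nu$ must be used in tandem; the interdependence of~(\ref{ng1}) and~(\ref{ng2a}) is the delicate point to organise carefully.
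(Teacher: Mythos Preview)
Your overall plan---feed the geodesic equation with Proposition~\ref{propCourbure} and run a bootstrap on $\abs{p_x}$---is exactly the paper's strategy, and your identification of the interdependence of (\ref{ng1}) and (\ref{ng2a}) as the delicate point is correct. But there is a genuine gap in your setup that propagates through all three parts.

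You assert that ``$N_y^\epsilon$ stays close to $-1$ on $B(q_c^\epsilon,r)$'', and then use $\inf\abs{N_y^\epsilon}$ to bound $\int\gamma_N\,dt$. This is false: away from $\pi^{-1}(\partial D)$ one has $H=N_z^1\neq 0$, so $N_z^\epsilon\to\pm 1$ and $N_y^\epsilon\to 0$ as $\epsilon\to 0$. Hence $\inf\abs{N_y^\epsilon}$ is \emph{not} bounded away from zero on $Z_\delta$, and your bound $\int\gamma_N\le\Delta p_y/\inf\abs{N_y}$ collapses. The paper sidesteps this by never isolating $\int\gamma_N$: from $\dot p_x=-N_x\gamma_N$ and $\dot p_y=-N_y\gamma_N$ one gets $\abs{\dot p_x}=\abs{N_x/N_y}\,\abs{\dot p_y}$, and the ratio $N_x/N_y=N_x^1/N_y^1$ is $\epsilon$-independent and small near $q_c$. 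Since $\dot p_y\ge 0$ (once $\gamma_N\ge 0$ via Lemma~\ref{lemmeCourburePositive}), $\int\abs{\dot p_y}=\Delta p_y\le 2$ directly, and (\ref{ng1}) follows.

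For (\ref{ng2a}), your ellipse heuristic is suggestive but not a proof; the paper instead introduces the intermediate zone $V_\nu$ and splits $Z_\delta=V_\nu\cup(Z_\delta\setminus V_\nu)$. On $Z_\delta\setminus V_\nu$ one has $\abs{N_z^\epsilon}\ge 1-\nu$, whence $\abs{p_z}\le\sqrt{\nu(2-\nu)}$ (from $p\perp N$), so together with $\abs{p_x}<\alpha$ the component $p_y$ stays bounded away from $0$ and keeps its sign; then $\dot H$ is bounded away from $0$ and the crossing time is $O(\delta)$. On $V_\nu$ (where $\abs{N_y}$ \emph{is} bounded below), Proposition~\ref{propCourbure} gives $\dot p_y=-N_y\gamma_N\to+\infty$, so the time there is $o(1)$. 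You handle the second zone but not the first, and that first zone is precisely where your assumption on $N_y^\epsilon$ fails.

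For (\ref{ng2b}), your Euler-formula route works but is heavier than needed: one always has $\gamma_+\ge\gamma_N\ge(-N_y)\gamma_N=\dot p_y$ on $V_\nu$ (where $\gamma_N\ge 0$ and $-N_y\in(0,1]$), so $\int_{V_\nu}K\le(\sup_{V_\nu}\gamma_-)\int_{V_\nu}\gamma_+\le(\sup_{V_\nu}\gamma_-)\,\Delta p_y$, and $\Delta p_y$ across $V_\nu$ is bounded below because the geodesic enters with $p_y<0$ and exits with $p_y>0$, both bounded away from $0$ by the $Z_\delta\setminus V_\nu$ analysis above. No claim about principal directions is required.
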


\begin{figure}
\centering
\begin{tikzpicture}
	\clip (-4,-2.5) rectangle (4,4);
    \path[fill=black!20] (-4,-4) rectangle (4,4);
    \path[draw=black, thick, fill=white] (0,-10) circle (10);
    \path[draw=black, dashed] (0,-10) circle (12);
    \path[draw=black, dashed] (0,-10) circle (10.5);
    \draw[draw=black, ->] (0,0) -- (0,-2) node[right] {$N(q_c)$};
    \node[circle, fill=black, inner sep=0pt, minimum size=3pt] (qc) at (0,0) {};
    \node (n1) at (-3.5,-0.4) {$V_\nu$};
    \node (n2) at (-3.5,1.2) {$Z_\delta$};
    \node (n3) at (-0.3,-0.3) {$q_c$};
    \path[draw=black, thick, densely dotted, fleche=0.8:black] (-4,4) -- (0,0) -- (4,4);
    \path[draw=black, fleche=0.2:black, fleche=0.8:black] (-4,4) .. controls (0,0) .. (0.4,{-10+sqrt(10^2-0.4^2)})  .. controls (0.8,{-10+sqrt(10^2-0.8^2)}) .. (4.8,4);
    \draw[draw=black, ->] (-3,-2) -- (-2,-2) node[below] {$e_x$};
    \draw[draw=black, ->] (-3,-2) -- (-3,-1) node[left] {$e_y$};
\end{tikzpicture}
\caption{The projection of the geodesic onto the billiard (solid line) is close to the billiard trajectory (dotted line).}
\end{figure}
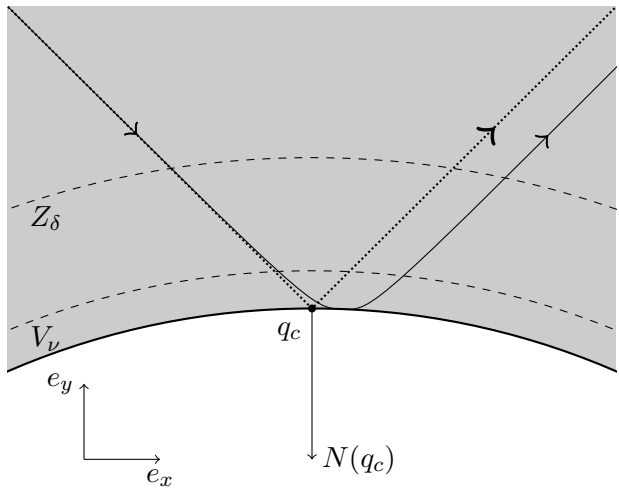

\begin{proof}
Let us prove the Statement (\ref{ng1}). We shall often write $q, p$ for $q^\epsilon(t), p^\epsilon(t)$ to simplify the notations.

Outside $Z_\delta$, the geodesic flow converges uniformly to the billiard flow, so we only need to consider $\sup_{t \in [t_\mathrm{in}, t_\mathrm{in} + \sqrt{\delta}]} \abs{p^\epsilon_x(t) - p^\epsilon_x(t_\mathrm{in})}$.

Let $t^1 = \inf \setof{t \in [t_\mathrm{in}, t_\mathrm{in}+\sqrt \delta]}{\prodscal{DN(q) \cdot p}{p} \leq 0}$ (or $t^1 = t_\mathrm{in} + \sqrt \delta$ if this set is empty), and consider $t \in [t_\mathrm{in}, t^1]$ (thus, $\prodscal{DN(q) \cdot p}{p} \geq 0$ at time $t$).

The geodesics $(q(t), p(t))$ follow the geodesic equation:
\[ \dot{p} = - N(q) \prodscal{DN(q) \cdot p}{p}, \]

which gives us the following estimates:
\[ \begin{aligned}
\abs{p_x(t) - p_x(t_\mathrm{in})} & \leq \int_{t_\mathrm{in}}^{t} \abs{\dot{p_x}}
\\ & \leq \int_{t_\mathrm{in}}^{t} \abs{\frac{N_x}{N_y}} \abs{\dot {p_y}}
\\ & \leq \left( \sup_{B(q(t_\mathrm{in}), \sqrt \delta)} \abs{\frac{N_x}{N_y}} \right) \int_{t_\mathrm{in}}^t \abs{\dot {p_y}}
\end{aligned}  \]

For all sufficiently small $\delta$, the quantity $N_y$ is negative in $B(q(t_\mathrm{in}), \sqrt{\delta})$, thus $\dot {p_y} = -N_y(q) \prodscal{DN(q) \cdot p}{p}$ is nonnegative and:

\[ \abs{p_x(t) - p_x(t_\mathrm{in})} \leq \left( \sup_{B(q(t_\mathrm{in}), \sqrt \delta)} \abs{\frac{N_x}{N_y}} \right) \abs{p_y(t) - p_y(t_\mathrm{in})}. \]

We know that $\frac{N_x}{N_y} = \frac{N_x^1}{N_y^1}$ does not depend on $\epsilon$. Moreover, $q(t_\mathrm{in})$ is close to $q_c$ and $\frac{N_x^1}{N_y^1}(q_c) = 0$, so the quantity $\sup_{B(q(t_\mathrm{in}), \sqrt \delta)} \abs{\frac{N_x}{N_y}}$ is close to $0$. On the other hand, $\abs{p_y(t) - p_y(t_\mathrm{in})}$ remains bounded since the geodesic has unit speed, which concludes the proof of Statement (\ref{ng1}) for $t \in [t_\mathrm{in}, t^1]$.

To extend the result to $t \in [t_\mathrm{in}, t_\mathrm{in} + \sqrt \delta]$, we prove that in fact $t^1 = t_\mathrm{in} + \sqrt \delta$: assume that $t^1 \neq t_\mathrm{in} + \sqrt \delta$. Then $p_x(t)$ remains close to $p_x(t_\mathrm{in})$ for $t \in [t_\mathrm{in}, t^1]$, so it remains bounded away from $1$, but $\gamma_N (q, p) = \prodscal{DN(q) \cdot p}{p} \leq 0$ at $t^1$. Thus, there is a contradiction with Lemma~\ref{lemmeCourburePositive}, and Statement (\ref{ng1}) is proved.

Now, let us prove Statement (\ref{ng2a}). We introduce the parameter $\nu$ and fix the parameters in the following way: first fix a small $\delta$, then a small $\nu$, and finally a small $\epsilon$.

Let us show that the boundaries of $Z_\delta$ and $V_\nu$ near $q_c$ are nearly parallel to the $e_x$ axis. From Fact~\ref{submersion}, the levels of $\left(H|_{Z_\delta}\right)^{-1}(a)$ are smooth curves. Moreover, for a sufficiently small $a \in [-1, 1]$, near $q_c$, the $y$-coordinates of the unit tangent vectors to $H^{-1}(a)$ remain small, while the $x$-coordinates are bounded away from zero. In particular, this applies to the boundary of $Z_\delta$, but also to the boundary of $V_\nu$, which is a level of $H$, since $N_z^\epsilon$ depends only on $H$ and $\epsilon$ (see Definition~\ref{defNormal}).

Outside $V_\nu$, $\abs{p_z}$ is bounded by $\sqrt{\nu(1-\nu)}$. Since $p_x$ is bounded away from $1$ and $p_x^2 + p_y^2 + p_z^2 = 1$, we deduce that $p_y$ remains bounded away from zero, uniformly with respect to $\delta$, $\nu$ and $\epsilon$, for all sufficiently small $\nu$. In particular, $p_y$ does not change sign in $Z_\delta \setminus V_\nu$, so it is only possible to enter $V_\nu$ once with $p_y < 0$ and exit once with $p_y > 0$. Thus, the geodesic can enter $V_\nu$ at most once.

There remains to show that the time spent in each zone is small.

For any $q \in \pi^{-1}(\Int D) \cap \Sigma_\epsilon$, it is natural to define $\frac{\partial H}{\partial x}$ as $DH_q(p)$, where $p$ is the (unique) speed vector in $T_q\Sigma$ such that $D_q\pi(p) = e_x$. We also define $\frac{\partial H}{\partial y}$ in the same way.

Outside $V_\nu$ (therefore outside $\pi^{-1}(\partial D)$) we write:
\[ \dot H = \frac{\partial H}{\partial x} p_x + \frac{\partial H}{\partial y} p_y. \]

Since the levels of the submersion $H$ are nearly parallel to $e_x$, $\frac{\partial H / \partial x}{\partial H / \partial y}$ is close to $0$ near $q_c$. Outside $V_\nu$, with the fact that $p_y$ is bounded away from $0$, this proves that $\dot H$ is bounded away from $0$, so the time spent in $Z_\delta \setminus V_\nu$ is $O(\delta)$.

In $V_\nu$ we have
\[ \dot{p_y} = - N_y \prodscal{ DN(q) \cdot p } {p}. \]

Fix $\delta, \nu > 0$. Since $N_z$ is bounded away from $1$, and $\frac{N_x}{N_y}$ is close to zero, we deduce that $N_y$ is bounded away from zero. Moreover, by Proposition~\ref{propCourbure}, $\prodscal{ DN(q) \cdot p } {p} \underset{\epsilon \to 0}{\to} +\infty$ uniformly in $V_\nu$, so $\dot{p_y} \underset{\epsilon \to 0}{\to} +\infty$. Since $p_y$ is bounded, this implies that the time spent in $V_\nu$ tends to $0$ as $\epsilon \to 0$. Thus, the total time spent in each zone is $O(\delta)$, so for any small enough $\delta$, $t_\mathrm{out} \leq t_\mathrm{in} +  \sqrt \delta$ (Statement~\ref{ng2a}).

If the curvature of $\partial D$ is negative everywhere, and $q^0(0) \not\in Z_\delta$, then the geodesic has the following behavior: it enters $Z_\delta$ with $p_y < 0$, then enters $V_\nu$ with $p_y < 0$. In $V_\nu$, $p_y$ changes sign, then the geodesic exits $V_\nu$ and finally, exits $Z_\delta$. Therefore, writing $t_2$ and $t_3$ the entry and exit times in $V_\nu$, since $K$ is negative in $Z_\delta$ (see Fact~\ref{factCurvature}):
\[ \begin{aligned} \int_{t_2}^{t_3} K = \int_{t_2}^{t_3} \gamma_+^\epsilon \gamma_-^\epsilon \leq \left(\sup_{V_\nu} \gamma_-^\epsilon\right) \int_{t_2}^{t_3} \gamma_+^\epsilon & \leq \left(\sup_{V_\nu} \gamma_-^\epsilon\right) \int_{t_2}^{t_3} - N_y \prodscal{ DN(q) \cdot p } {p} \\ & = \left(\sup_{V_\nu} \gamma_-^\epsilon\right) (p_y(t_3) - p_y(t_2)). \end{aligned} \]

With Proposition~\ref{propCourbure}, this proves that $\int_{t_2}^{t_3} K$ is bounded away from $0$: Statement (\ref{ng2b}) is proved.
\end{proof}

\paragraph{End of the proof of Theorem~\ref{mainThm}.} To prove the local uniform convergence, we introduce a family of elements $(t^\epsilon, q^\epsilon(0), p^\epsilon(0)) \in f_\epsilon(A)$ with parameter $\epsilon$, and assume that $\left(t^\epsilon, (f_\epsilon)_*^{-1}(q^\epsilon(0), p^\epsilon(0))\right)$ has a limit $(t^0, q^0(0), p^0(0)) \in A$ as $\epsilon \to 0$. The geodesic of $\Sigma_\epsilon$ starting at $(q^\epsilon(0), p^\epsilon(0))$ is written $(q^\epsilon(t), p^\epsilon(t))_{t \in \mathbb R}$. We want to show that $\pi_* (q^\epsilon(t^\epsilon), p^\epsilon(t^\epsilon))$ tends to $\Psi_{t^0} \circ \pi_* (q^0(0), p^0(0))$. Since the billiard trajectory $\Psi_t \circ \pi_* (q^0(0), p^0(0))$ experiences only a finite number of bounces in any finite time interval, we may assume that the trajectory for $t \in [0, t^0]$ has only one bounce\footnote{If the billiard trajectory has no bounce at all, then the geodesic remains outside of $Z_\delta$ and the convergence is clear.}, at a time $t_c$. As in Proposition~\ref{nonGrazing}, let $(q^0(t), p^0(t))$ be the (unique) pullback of this trajectory by $\pi$ in $\Sigma_\epsilon$ for $t \in [0, t_c]$, and let $q_c = q^0(t_c)$. Assume that $N(q_c)$ is directed towards the exterior of the billiard table $D$, and (up to a rotation of axis $e_z$) that $N(q_c) = -e_y$. The geodesic $(q^\epsilon(t), p^\epsilon(t))$ enters $Z_\delta$ at some time $t^\epsilon_\mathrm{in}$ and exits at some time $t^\epsilon_\mathrm{out}$, and the only difficulty to prove the convergence is located between these two times, since $g_\epsilon$ converges uniformly to a flat metric outside $Z_\delta$.

Since $p^0_x(0) < 1$, Proposition~\ref{nonGrazing} shows that
\[ \lim_{\delta \to 0} \lim_{\epsilon \to 0} \abs{p_x(t_\mathrm{out}) - p_x(t_\mathrm{in})} = 0. \]
Moreover, for all $\delta > 0$, $\lim_{\epsilon \to 0} p_z(t_\mathrm{in}) = \lim_{\epsilon \to 0} p_z(t_\mathrm{out}) = 0$, and since the geodesic has unit speed,
\[ \lim_{\delta \to 0} \lim_{\epsilon \to 0} \abs{p_y(t_\mathrm{in})} = \lim_{\delta \to 0} \lim_{\epsilon \to 0} \abs{p_y(t_\mathrm{out})}. \]

We have already seen that the geodesic enters $Z_\delta$ with $p_y < 0$ and exits with $p_y > 0$. Thus:
\[ \lim_{\delta \to 0} \lim_{\epsilon \to 0} p_y(t_\mathrm{in}) = - \lim_{\delta \to 0} \lim_{\epsilon \to 0} p_y(t_\mathrm{out}). \]

Proposition~\ref{nonGrazing} also states that $\lim_{\delta \to 0} \lim_{\epsilon \to 0} \abs{t_\mathrm{out} - t_\mathrm{in}} = 0$.

Thus, the limiting trajectory satisfies the billiard reflection law and the uniform convergence is proved.

\section{Proof of Theorem~\ref{thmAnosov}} \label{sectThmAnosov}

In this section, the walls of the billiard are assumed to be concave, and the billiard has \emph{finite horizon}. The following lemma gives an important consequence of the second property.

\begin{lemme} \label{lemmaFiniteHorizon}
Let $D$ be a billiard in $\mathbb T^2$ whose walls are negatively curved. Assume that $D$ has finite horizon ($D$ contains no geodesic of $\mathbb T^2$ with infinite lifetime in the past and the future). Then, there is an $\eta > 0$, a time $t^{\mathrm{max}}$ and an angle $\phi_0$ such that every curve of length $t^\mathrm{max}$ in $\mathbb T^2$, which is $\eta$-close to a straight line in the $C^1$ metric, hits at least once the boundary with an angle $\geq \phi_0$.
\end{lemme}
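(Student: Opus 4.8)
The plan is to prove the contrapositive by a compactness argument on the space of "near-straight" curves. Suppose the conclusion fails. Then for every $\eta, t^{\mathrm{max}}, \phi_0$ there is a counterexample curve, and in particular, taking sequences $\eta_n \to 0$, $t^{\mathrm{max}}_n \to +\infty$, $\phi_0^n \to 0$, I obtain curves $c_n$ of length $t^{\mathrm{max}}_n$ that are $\eta_n$-close to straight lines in the $C^1$ metric yet never hit $\partial D$ transversally with angle $\geq \phi_0^n$. The goal is to extract a limiting object that is a genuine bi-infinite geodesic of $\mathbb T^2$ staying inside $D$ forever, contradicting finite horizon.

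First I would set up the compactness. Reparametrize each $c_n$ by arclength and center it (shift the parameter so that the midpoint sits at parameter $0$); since $\mathbb T^2$ is compact and the $C^1$-distance to a straight line is $\leq \eta_n$, the family $(c_n)$ is equi-Lipschitz with uniformly controlled tangent directions. By Arzelà--Ascoli, after passing to a subsequence, the $c_n$ converge in $C^1_{\mathrm{loc}}$ on $\mathbb R$ to a curve $c_\infty : \mathbb R \to \mathbb T^2$. Because each $c_n$ is $\eta_n$-close to a straight line and $\eta_n \to 0$, the limit $c_\infty$ has constant unit speed and constant direction, i.e. it is exactly a full geodesic (straight line) of $\mathbb T^2$ with infinite lifetime in both time directions. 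The remaining work is to show this limit line stays inside $D$, which is where the finite-horizon hypothesis will bite.

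The key step is translating "never hits the boundary with angle $\geq \phi_0^n$" into "$c_\infty \subseteq D$". Here I would use the hypotheses on $\partial D$: the walls are smooth with negative (dispersing) curvature, so they are strictly convex as seen from outside $D$, and a straight line can meet $\partial D$ either transversally or tangentially. I claim $c_\infty$ cannot cross into the exterior of $D$: if $c_\infty(s_0)$ lay outside $\overline D$, then by $C^0$-convergence $c_n(s_0)$ would be outside $D$ for large $n$, but each $c_n$ is \emph{a curve in $D$} (more precisely, one should read the failure hypothesis as producing curves constrained to $D$ that avoid transversal collisions) — so the only way to leave would be through a near-tangential contact. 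The negative curvature of the walls is what rules out the limit line lingering tangentially: a straight line tangent to a strictly concave wall immediately exits $D$ on both sides of the contact point, so it would have to cross transversally somewhere nearby, producing a definite angle $\geq \phi_0$ for the approximating $c_n$, a contradiction once $\phi_0^n$ is small enough. Consequently $c_\infty$ is a bi-infinite geodesic of $\mathbb T^2$ contained in $D$, contradicting the finite-horizon assumption. This yields the three desired uniform constants $\eta$, $t^{\mathrm{max}}$, $\phi_0$.

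The main obstacle, and the step I would treat most carefully, is the tangency analysis in the previous paragraph: ruling out that the limiting line grazes the boundary without a quantitative transversal crossing. Near-grazing behavior is exactly the delicate phenomenon flagged in the introduction (the "fake" billiard trajectories of Figure~\ref{figFake}), so I would make the argument quantitative using that each wall $\Gamma_i$ has curvature bounded away from zero (by compactness of $\partial D$ and Assumption~\ref{billardcourburenegative}). Concretely, I would fix a uniform lower bound $\kappa_0 > 0$ on $\abs{\text{curvature of } \partial D}$, and show via a second-order Taylor expansion of $\Gamma_i$ at a contact point that any line passing within distance $o(1)$ of $\partial D$ and staying in $D$ over a long arclength must realize a crossing angle that is bounded below in terms of $\kappa_0$; this gives the explicit $\phi_0$. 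The finiteness of $t^{\mathrm{max}}$ then follows because, finite horizon being an open condition via the same compactness, there is a uniform upper bound on the length of any straight segment that can fit inside $D$ without a genuine collision.
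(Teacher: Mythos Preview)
Your overall strategy---negate, take a diagonal sequence with $\eta_n,\phi_0^n\to 0$ and $t^{\mathrm{max}}_n\to\infty$, extract a $C^1_{\mathrm{loc}}$ limit via Arzel\`a--Ascoli, and contradict finite horizon---is exactly the paper's approach. The paper's proof is three lines: the limit $\Gamma$ is a straight line that ``does not hit the boundary with an angle greater than $0$, so that $\Gamma$ remains in $D$''.

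However, your argument for why the limit stays in $D$ contains two genuine missteps. First, the counterexample curves $c_n$ are \emph{not} assumed to lie in $D$; the lemma speaks of curves in $\mathbb T^2$, and the failure hypothesis only says they never meet $\partial D$ at angle $\geq \phi_0^n$. So the sentence ``by $C^0$-convergence $c_n(s_0)$ would be outside $D$ for large $n$, but each $c_n$ is a curve in $D$'' is not available. The correct mechanism is persistence of transversality under $C^1$ limits: if $c_\infty$ crossed $\partial D$ at some point with angle $\alpha>0$, then for large $n$ the curve $c_n$ would also cross $\partial D$ nearby with angle close to $\alpha$, contradicting $\phi_0^n\to 0$. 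Hence $c_\infty$ meets $\partial D$ only tangentially (angle $0$), if at all.

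Second, you have the tangency geometry backwards. With negatively curved walls the obstacles are strictly convex, so a straight line tangent to an obstacle touches it at a single point and otherwise lies \emph{outside} the obstacle, i.e.\ \emph{inside} $D$. There is therefore nothing to rule out: a line meeting $\partial D$ only at angle $0$ automatically lies in $D$, and that is precisely the contradiction with finite horizon. The Taylor-expansion / $\kappa_0$-quantitative analysis you sketch in the last paragraph is unnecessary; once the two points above are corrected, the proof is as short as the paper's.
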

\begin{proof}
Assume that the conclusion of the lemma is false. Then there are curves $\Gamma_n : [-n, n] \to \mathbb T^2$ which do not hit the boundary with an angle greater than $\frac{1}{n}$, and which are $\frac{1}{n}$-close to geodesics in the $C^1$ metric. By a diagonal argument, one may extract a subsequence which converges to a geodesic $\Gamma : \mathbb R \to \mathbb T^2$ which does not hit the boundary with an angle greater than $0$, so that $\Gamma$ remains in $D$.
\end{proof}

As another consequence of the concavity of the walls, we may assume that the principal curvatures satisfy $\abs{\gamma_-^\epsilon} \leq \abs{\gamma_+^\epsilon}$ in $Z_\delta$ (with Proposition~\ref{propCourbure}). We write $\kappa(\delta, \epsilon) = max_{q \not\in Z_\delta} \left(\abs{\gamma_+^\epsilon(q)}, \abs{\gamma_-^\epsilon(q)}\right)$. Notice that for all $\delta > 0$, $\kappa(\delta, \epsilon) \underset{\epsilon \to 0}{\to} 0$. Later we will simply write $\kappa$ for $\kappa(\delta, \epsilon)$. We also define
\[ W_\kappa = \setof{q \in \Sigma_\epsilon}{ K(q) \leq - \kappa }. \]
Notice that for any fixed $\delta > 0$, there exists $\epsilon_0 > 0$ such that for $\epsilon \leq \epsilon_0$, $W_\kappa \subseteq Z_\delta$.

In the following proposition, we determine what remains of Proposition~\ref{nonGrazing} when the geodesics are not assumed to be nongrazing, but when instead they are assumed to undergo little curvature.

\begin{prop} \label{grazing}
Consider a geodesic $(q^\epsilon(t), p^\epsilon(t))_{t \in \mathbb R}$ in $\Sigma_\epsilon$, for some $\epsilon > 0$. Define $t^\epsilon_\mathrm{in}$ as the first time at which the geodesic $(q^\epsilon(t), p^\epsilon(t))$ enters $Z_\delta$ and assume that $t^\epsilon_\mathrm{in} + \sqrt[3] \delta < 1$. As before, choose the orientation of the normal vector $N$ such that it points towards the exterior of the billiard table at its boundary near $q_\mathrm{in}$. Up to a rotation of axis $e_z$, we may require that $N_x(q_\mathrm{in}) = 0$ and $N_y(q_\mathrm{in}) < 0$.

With these notations, for all $m > 0$, there exists $\delta_0 > 0$, such that for all $\delta \leq \delta_0$, there exists $\epsilon_0 > 0$, such that for all $\epsilon \leq \epsilon_0$ and each geodesic $(q^\epsilon, p^\epsilon)$ as above such that $\int_0^1 \abs{K(q^\epsilon(t))} \leq 3\kappa^2$:
\begin{enumerate}
\item \label{stat1} $\inf_{t \in [0, t_\mathrm{in} + \sqrt[3]\delta]} (p_y(t) - p_y(0)) \geq - 2 \sqrt{\kappa}$;
\item \label{stat2a} $\sup_{t \in [0, t_\mathrm{in} + \sqrt[3]\delta]} \abs{p_x(t) - p_x(0)} \leq m$;
\item \label{stat2b} denoting by $Z_\delta^0$ the connected component of $Z_\delta$ containing $q(t_\mathrm{in})$, there exists $t_\mathrm{out} \in [t_\mathrm{in}, t_\mathrm{in} + \sqrt[3]\delta]$, at which the geodesic exits $Z_\delta^0$, and does not come back to $Z_\delta^0$ before visiting another component of $Z_\delta$.
\end{enumerate}
In particular, the choice of $\delta_0$ and $\epsilon_0$ does not depend on the choice of the geodesic $(q^\epsilon(t), p^\epsilon(t))$.

\end{prop}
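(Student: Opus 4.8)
The plan is to mirror the structure of Proposition~\ref{nonGrazing}, replacing the nongrazing hypothesis $\abs{p_x} < \alpha$ by the total-curvature bound $\int_0^1 \abs{K} \leq 3\kappa^2$, and to convert that bound into pointwise control by the elementary observation that, in $Z_\delta$ where $\abs{\gamma_-^\epsilon} \leq \abs{\gamma_+^\epsilon}$, one has $\abs{K} = \abs{\gamma_+^\epsilon}\abs{\gamma_-^\epsilon} \geq (\gamma_-^\epsilon)^2$, hence $\abs{\gamma_-^\epsilon} \leq \sqrt{\abs K}$. Combined with Cauchy--Schwarz and the assumption $t_\mathrm{in} + \sqrt[3]\delta < 1$, this gives the key smallness estimate $\int_0^{t_\mathrm{in}+\sqrt[3]\delta}\sqrt{\abs K} \leq \sqrt{\int_0^1 \abs K} \leq \sqrt 3\,\kappa$. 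Throughout I use the geodesic equation $\dot p = -N\gamma_N$ componentwise, the inequality $\gamma_N \geq \gamma_-^\epsilon$ (the minimal normal curvature), and the fact that outside $Z_\delta$ one has $\abs{\gamma_N} \leq \kappa$ by definition of $\kappa$.

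For Statement~(\ref{stat1}) I write $p_y(t) - p_y(0) = \int_0^t -N_y\gamma_N$; since $N_y < 0$ near $q_\mathrm{in}$, only $\{\gamma_N < 0\}$ contributes negatively, and there $-N_y\gamma_N = \abs{N_y}\gamma_N \geq \gamma_N \geq \gamma_-^\epsilon \geq -\sqrt{\abs K}$ inside $Z_\delta$, while outside $Z_\delta$ the integrand is $\geq -\kappa$. The Cauchy--Schwarz estimate then yields $p_y(t) - p_y(0) \geq -(1+\sqrt3)\kappa \geq -2\sqrt\kappa$ once $\kappa$, hence $\epsilon$, is small. For Statement~(\ref{stat2a}), from $\dot p_x = -N_x\gamma_N$ the portion outside $Z_\delta$ contributes at most $\kappa$, and on $[t_\mathrm{in}, t_\mathrm{in}+\sqrt[3]\delta]$ the trajectory stays within $\sqrt[3]\delta$ of $q_\mathrm{in}$, so $\sup\abs{N_x/N_y}$ can be made as small as we wish by shrinking $\delta$ (recall $N_x(q_\mathrm{in})=0$). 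On $\{\gamma_N < 0\}$ I bound $\abs{N_x\gamma_N} \leq \abs{\gamma_-^\epsilon} \leq \sqrt{\abs K}$, integrating to $\leq\sqrt3\kappa$; on $\{\gamma_N > 0\}$ I write $\abs{N_x}\gamma_N = \tfrac{\abs{N_x}}{\abs{N_y}}\dot p_y$ and bound $\int\dot p_y^+ \leq 2 + \int\dot p_y^-$ using Statement~(\ref{stat1}), the second term being $\leq(1+\sqrt3)\kappa$. Choosing $\delta$ then $\epsilon$ small makes the total $\leq m$.

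Statement~(\ref{stat2b}) is where I expect the main obstacle. First the curvature bound forces the time spent in $V_\nu$ to be negligible: since $\abs K \to +\infty$ uniformly there by Proposition~\ref{propCourbure}, the time in $V_\nu$ is at most $3\kappa^2/\inf_{V_\nu}\abs K \to 0$. The essential point is then geometric. By Statements~(\ref{stat1})--(\ref{stat2a}) the projected direction $(p_x,p_y)$ barely rotates (unless $p_y$ changes sign, which is itself a reflection that sends the trajectory back out of $Z_\delta^0$), so $\pi\circ\gamma$ is nearly a straight segment, whereas $Z_\delta^0$ is the $\delta$-neighborhood of a wall whose curvature is bounded away from $0$ by a constant $c$. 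Over a time $\sqrt[3]\delta$ the wall's tangent turns by an amount $\gtrsim\sqrt[3]\delta$, so the transverse gap between the near-straight geodesic and the curved strip grows like $c\,\delta^{2/3} \gg \delta$, forcing the geodesic to leave $Z_\delta^0$ before time $t_\mathrm{in}+\sqrt[3]\delta$. This is exactly why the weaker time bound $\sqrt[3]\delta$ (rather than the $\sqrt\delta$ of Proposition~\ref{nonGrazing}) is used: $\delta^{2/3}$ must beat the strip width $\delta$. Once the trajectory exits on the interior side moving nearly straight, near-straightness together with the finite-horizon geometry of Lemma~\ref{lemmaFiniteHorizon} guarantees that it reaches a \emph{different} component of $Z_\delta$ before it could curve back into $Z_\delta^0$.

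The hard part will be making this divergence argument uniform across the two regimes -- reflection through $V_\nu$ versus grazing transit through $Z_\delta^0\setminus V_\nu$ -- and, in particular, controlling the passage through $V_\nu$, where both $\gamma_N$ and $\abs K$ are large; the Cauchy--Schwarz bound on $\int\sqrt{\abs K}$ is precisely what keeps the displacement of $p$ under control there, so that the near-straightness used in the geometric step survives the crossing.
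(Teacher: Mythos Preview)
Your treatment of Statements~(\ref{stat1}) and~(\ref{stat2a}) matches the paper's, with one improvement worth keeping: bounding $\int_0^1\sqrt{\abs K}\leq\sqrt 3\,\kappa$ by Cauchy--Schwarz is cleaner than the paper's split into $W_\kappa=\{K\leq-\kappa\}$ and its complement (where the paper uses the crude $\sqrt x\leq x+1$ on $W_\kappa$ and $\abs{\gamma_-^\epsilon}\leq\sqrt\kappa$ off it to reach the same $2\sqrt\kappa$).

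The genuine gap is in Statement~(\ref{stat2b}). The paper's argument is an explicit dichotomy: fix $\alpha\in(0,1)$ close to $1$; for $\abs{p_x(0)}<\alpha$ it invokes Proposition~\ref{nonGrazing} directly, which already gives exit before $t_\mathrm{in}+\sqrt\delta$; only in the remaining case $\abs{p_x(0)}\geq\alpha$ does it use the geometric ``wall curves away while the geodesic goes nearly straight'' picture, and there Statement~(\ref{stat2a}) keeps $\abs{p_x}$ bounded away from $0$, which is exactly what makes the tangential-displacement estimate work. Your unified version buries this split in a parenthetical, but the separation estimate $c\,\delta^{2/3}\gg\delta$ requires the tangential speed $\abs{p_x}$ to be uniformly bounded below, and that fails precisely in the non-grazing regime. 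For that regime your phrase ``a reflection that sends the trajectory back out'' is not justified: Statement~(\ref{stat1}) only says $p_y(t)\geq p_y(0)-2\sqrt\kappa$, which does not prevent $p_y$ from flipping positive and then returning to $p_y(0)<0$, nor does it bound the duration of the reflection process within $\sqrt[3]\delta$. Your time-in-$V_\nu$ bound is correct but controls only the time \emph{inside} $V_\nu$, not the full passage. The clean fix is the paper's: hand the case $\abs{p_x(0)}<\alpha$ to Proposition~\ref{nonGrazing} and reserve the curvature-of-the-wall argument for the truly grazing trajectories.

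A minor point: invoking Lemma~\ref{lemmaFiniteHorizon} for the ``does not come back'' clause is unnecessary; the paper relies only on the strict convexity of the obstacle and the near-straightness of the outgoing trajectory.
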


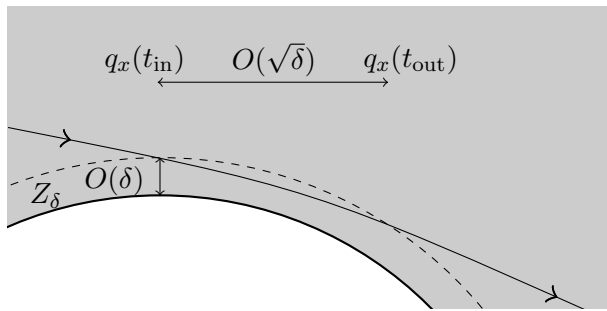
\begin{figure}[ht!]
\centering
\begin{tikzpicture}
	\clip (-4,-2) rectangle (4,2);
    \path[fill=black!20] (-4,-4) rectangle (4,4);
    \path[draw=black, thick, fill=white] (-2,-5.5) circle (5);
    \path[draw=black, dashed] (-2,-5.5) circle (5.5);
    \draw[draw=black, <->] (-2.03,1) -- (1,1) node[right] {};
    \draw[draw=black, <->] (-2,-0.5) -- (-2,0) node[right] {};
    \node at (-2.2,1.3) {$q_x(t_\mathrm{in})$};
    \node at (1.3,1.3) {$q_x(t_\mathrm{out})$};
    \node at (-2.6,-0.3) {$O(\delta)$};
    \node at (-0.5,1.3) {$O(\sqrt\delta)$};
    \node at (-3.5,-0.5) {$Z_\delta$};
    \path[draw=black, fleche=0.1:black, fleche=0.9:black] (-4,0.4) .. controls (0,-0.4) .. (4,-2.2);
\end{tikzpicture}
\caption{The geodesic exits $Z_\delta$ before a time which is $O(\sqrt{\delta})$, because $p_y(t) - p_y(0)$ is bounded from below by a small negative number.} \label{figGrazing}
\end{figure}

\begin{proof}
As before, we only need to consider what happens for $t \geq t_\mathrm{in}$, as the metric tends to a flat metric outside $Z_\delta$.

To prove the first statement, writing $\dot {p_y} = \dot {p_y}^+ - \dot {p_y}^-$, with $\dot {p_y}^+ = \max (\dot {p_y}, 0)$ (positive part) and $\dot {p_y}^-(t) = - \min (\dot {p_y}, 0)$ (negative part), it suffices to show that $\int_{t_\mathrm{in}}^t \dot {p_y}^-$ is (uniformly) close to $0$. We divide this integral into two parts. In the part where $q \in W_{\kappa}$, the quantity $\dot{p_y} = -N_y(q) \prodscal{DN(q) \cdot p}{p}$ is bigger than $-\sqrt{K(t)}$ (because $\abs{\gamma_-^\epsilon} \leq \abs{\gamma_+^\epsilon}$), so it is bigger than $-K(t) - 1$. The time spent in $W_{\kappa}$ is smaller than $3\kappa^2/\kappa$, and the integral of $\abs{K(t)}$ is smaller than $3 \kappa^2$. In the part where $q \not\in W_{\kappa}$, $\prodscal{DN(q) \cdot p}{p}$ is bigger than $-\sqrt{\kappa}$. Thus, $\int_{t_\mathrm{in}}^t \dot {p_y}^- \leq 3 \kappa^2 + 3 \kappa + \sqrt{\kappa} (t_\mathrm{in} + \sqrt[3]\delta) \leq 2 \sqrt{\kappa}$, and Statement~\ref{stat1} is proved.

For all $t \in [0, \sqrt[3]\delta]$, we may write, as in the proof of Proposition~\ref{nonGrazing}:
\[ \abs{p_x(t) - p_x(t_\mathrm{in})} \leq \left( \sup_{B(q(t_\mathrm{in}), \sqrt[3] \delta)} \abs{\frac{N_x}{N_y}} \right) \int_{t_\mathrm{in}}^t \abs{\dot {p_y}}. \]
Since $\frac{N_x}{N_y}$ does not depend on $\epsilon$, $\left( \sup_{B(q(t_\mathrm{in}), \sqrt[3] \delta)} \abs{\frac{N_x}{N_y}} \right)$ is close to $0$. Moreover,
\[ \int_{t_\mathrm{in}}^t \abs{\dot {p_y}} = \int_{t_\mathrm{in}}^t \left(\dot {p_y} + 2 \dot {p_y}^-\right) \leq \abs{p_y(t) - p_y({t_\mathrm{in}})} + 2 \int_{t_\mathrm{in}}^t \dot {p_y}^-. \]
The term $\abs{p_y(t) - p_y({t_\mathrm{in}})}$ is bounded by $2$, and $\int_{t_\mathrm{in}}^t \dot {p_y}^-$ is close to $0$, so $\int_{t_\mathrm{in}}^t \abs{\dot {p_y}}$ is bounded, which proves Statement~\ref{stat2a}.

Finally, to prove Statement~\ref{stat2b}, fix any $\alpha \in (0,1)$. Lemma~\ref{nonGrazing} implies that all trajectories such that $\abs{p_x(0)} < \alpha$ exit $Z_\delta$ before $t = t_\mathrm{in} + \sqrt{\delta}$. For the other trajectories, Statement~\ref{stat2a} implies that $p_x$ remains bounded away from $0$. Together with Statement~\ref{stat1} and the uniform concavity of the walls of the billiard table, this implies that the geodesic must exit $Z_\delta^0$ definitively before a time which is $O(\sqrt \delta)$ (see Figure~\ref{figGrazing}).
\end{proof}

\begin{lemme} \label{straightGeodesic}
For all $m > 0$, there exists $\epsilon_0 > 0$, such that for all $\epsilon \leq \epsilon_0$, and all geodesics $(q^\epsilon(t), p^\epsilon(t))_{t \in \mathbb R}$ such that $\int_0^1 \abs{K(q^\epsilon(t))} \leq 3\kappa^2$,
\[ \sup_{t \in \left[\frac{1}{3}, \frac{2}{3}\right]} \norm{p(t) - p(1/3)} \leq m. \]
In particular, the choice of $\epsilon_0$ does not depend on the choice of the geodesic.
\end{lemme}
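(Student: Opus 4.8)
The plan is to reduce everything to a single scalar bound and then make it small uniformly. The geodesic equation~\eqref{eqDerivees} reads $\dot p = -N(q)\,\gamma_N(q,p)$, and since $N$ is a unit vector, $\norm{\dot p(t)} = \abs{\gamma_N(q^\epsilon(t),p^\epsilon(t))}$. Hence for every $t\in[1/3,2/3]$,
\[ \norm{p(t)-p(1/3)} \le \int_{1/3}^{t}\norm{\dot p}\,ds \le \int_{1/3}^{2/3}\abs{\gamma_N}\,ds, \]
so it suffices to bound the right-hand side by $m$ for all small $\epsilon$. Note that this reduction is coordinate-free, so, unlike in Propositions~\ref{nonGrazing} and~\ref{grazing}, no rotation of the frame and no a priori case distinction on $p_x$ is needed.

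I would then split the interval according to the position of $q^\epsilon(t)$, using the three regimes already singled out in this section. Outside $Z_\delta$ the bound is immediate: by definition of $\kappa$, $\abs{\gamma_N}\le\max(\abs{\gamma_+^\epsilon},\abs{\gamma_-^\epsilon})\le\kappa$, contributing at most $\kappa/3\to 0$. The conceptually central estimate is on the strongly curved part $V_\nu$: by Fact~\ref{factCurvature}, $K=\gamma_+^\epsilon\gamma_-^\epsilon<0$ with $\abs{\gamma_-^\epsilon}\le\gamma_+^\epsilon$, so $\abs{\gamma_N}\le\gamma_+^\epsilon=\abs{K}/\abs{\gamma_-^\epsilon}$; and by Proposition~\ref{propCourbure}, Equation~\eqref{eq3}, the concavity of the walls keeps $\abs{\gamma_-^\epsilon}\ge c>0$ on $V_\nu$. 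Thus $\abs{\gamma_N}\le\abs{K}/c$ there, and the curvature hypothesis converts directly into
\[ \int_{\{t:\,q^\epsilon(t)\in V_\nu\}}\abs{\gamma_N}\,dt \le \frac1c\int_0^1\abs{K(q^\epsilon(t))}\,dt \le \frac{3\kappa^2}{c}\xrightarrow[\epsilon\to 0]{}0. \]
This is the step that makes the lemma true: a large total turning of $p$ would force a large curvature integral, which is exactly what the hypothesis forbids (in particular it rules out any genuine reflection during $[0,1]$).

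The remaining contribution, from the transition region $Z_\delta\setminus V_\nu$, is where I expect the main obstacle to lie, since there the lower bound on $\abs{\gamma_-^\epsilon}$ degenerates (the normal becomes nearly vertical, $\abs{N_z^\epsilon}\ge 1-\nu$) and $\gamma_+^\epsilon$ is itself not uniformly bounded, so the $\abs{K}/c$ bound is unavailable. The idea is to exploit precisely that $N$ is nearly vertical: writing $\abs{\gamma_N}=\abs{\dot p_z}/\abs{N_z^\epsilon}\le\abs{\dot p_z}/(1-\nu)$, and recalling as in the proof of Proposition~\ref{nonGrazing} that $\abs{p_z}\le\sqrt{\nu(1-\nu)}$ throughout $Z_\delta\setminus V_\nu$, one reduces to controlling the total variation of $p_z$ on this region. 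Since $N_z^\epsilon$ keeps a constant sign on each side of the rim $\{N_z^\epsilon=0\}\subseteq V_\nu$, and the number of rim crossings in time $1/3$ is bounded (consecutive visits to $Z_\delta$ are to distinct components by Proposition~\ref{grazing}, hence separated by a definite free path), the variation of $p_z$ is $O(\sqrt\nu)$, giving an $O(\sqrt\nu)$ contribution.

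Finally I would fix the constants in the order dictated by the estimates: first a small $\nu$, making the transition term below $m/2$; then a small $\delta$ (so the curvature bounds and $W_\kappa\subseteq Z_\delta$ hold); and finally $\epsilon_0$ so small that $\kappa/3+3\kappa^2/c<m/2$ for all $\epsilon\le\epsilon_0$. As every bound above is uniform in the geodesic, the resulting $\epsilon_0$ is independent of $(q^\epsilon,p^\epsilon)$, as required. The delicate point to get right is the transition argument: one must ensure the sign control on $\dot p_z$ is genuinely uniform, including for nearly tangential geodesics (on which no hypothesis $\abs{p_x}\le\alpha$ is assumed), so that the total variation of $p_z$ does not secretly accumulate through many small oscillations.
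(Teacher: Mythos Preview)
Your reduction to bounding $\int_{1/3}^{2/3}\abs{\gamma_N}$ and your treatment of the region $V_\nu$ via $\abs{\gamma_N}\le\gamma_+^\epsilon=\abs{K}/\abs{\gamma_-^\epsilon}\le\abs{K}/c$ are both correct and elegant; this is a genuinely different route from the paper's, which never attempts to bound $\int\abs{\gamma_N}$ at all.

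The gap is exactly where you suspect, in the transition zone $Z_\delta\setminus V_\nu$. From $\dot p_z=-N_z^\epsilon\,\gamma_N$ and $\abs{N_z^\epsilon}\ge 1-\nu$ you correctly get $\abs{\gamma_N}\le\abs{\dot p_z}/(1-\nu)$, so what you need is a bound on the \emph{total variation} of $p_z$ there. Your argument for this is that $N_z^\epsilon$ keeps a constant sign and the number of rim crossings is bounded; but a constant sign of $N_z^\epsilon$ does \emph{not} force a constant sign of $\dot p_z$, since $\gamma_N(q,p)$ ranges over $[\gamma_-^\epsilon,\gamma_+^\epsilon]$ and in $Z_\delta$ these have opposite signs. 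So $p_z$ may oscillate within its $O(\sqrt\nu)$ range, and nothing you have written rules out an unbounded total variation on a single excursion through $Z_\delta\setminus V_\nu$. (Concretely, near the boundary of $V_\nu$ the principal curvature $\gamma_+^\epsilon$ is still of order $1/\epsilon^2$, while $\abs{\gamma_-^\epsilon}$ is no longer bounded below, so neither a pointwise bound on $\abs{\gamma_N}$ nor the $\abs{K}/c$ trick is available there.) Your final paragraph flags the danger but does not close it.

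The paper avoids this obstacle by abandoning the coordinate-free $\int\abs{\gamma_N}$ strategy and instead tracking $p$ componentwise across each visit to $Z_\delta$, using the two preceding propositions as black boxes. First, Proposition~\ref{nonGrazing} (applied with $\alpha$ close to $1$) shows that any entry into $Z_\delta$ that is not nearly tangential produces $\int K\le l<0$ bounded away from $0$, contradicting the hypothesis $\int_0^1\abs{K}\le 3\kappa^2$; hence every entry and exit in $[1/3,2/3]$ is nearly tangential. Then Proposition~\ref{grazing} gives: the time inside each component of $Z_\delta$ is at most $\sqrt[3]\delta$; the local $p_x$ is preserved up to $m$; and the next visit is to a different component, so the number of visits in time $1/3$ is uniformly bounded. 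Combining near-tangential entry and exit with conservation of $p_x$ and short duration forces $p$ to change little across each visit, and outside $Z_\delta$ the metric is nearly flat. Summing over the bounded number of visits gives the lemma. In short, the paper's proof pays the price of working in rotating local frames and invoking the grazing/nongrazing dichotomy precisely to sidestep the transition-zone estimate that blocks your argument.
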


\begin{proof}
Outside $Z_\delta$, $\dot{p_x}$ vanishes as $\epsilon \to 0$. Each time that the geodesic enters or exits $Z_\delta$, Proposition~\ref{nonGrazing} implies (with the choice of $\alpha \in (0,1)$ close to $1$) that $p$ is nearly tangent to the boundary of the billiard table (otherwise, the geodesic undergoes strong negative curvature after the entry or before the exit, which is why we consider only the interval $\left[\frac{1}{3}, \frac{2}{3}\right]$). Moreover, Proposition~\ref{grazing} implies that the time spent in $Z_\delta$ is small. Thus, the exit point is near the entrance point and, from Statement~\ref{ng2a} of Proposition~\ref{grazing}, the speed vector $p$ is almost preserved. Then, the geodesic goes to visit another component of $Z_\delta$, so there is an upper bound on the number of times that it enters $Z_\delta$. Thus, the total change in $p$ is uniformly small.
\end{proof}

\begin{proof}[End of the proof of Theorem~\ref{thmAnosov}]
To show that the flow has the Anosov property, we consider a small $\delta$, a small $\epsilon$, and a geodesic $(q^\epsilon(t), p^\epsilon(t))_{t \in \mathbb R}$ in $\Sigma_\epsilon$, and examine the Ricatti equation:
\[ 
\left\{
\begin{aligned}
u (0) & = 0
\\ u'(t) & = - K^\epsilon(q^\epsilon(t)) - u^2(t).
\end{aligned} \right. \]
It suffices to show that $u(1)$ is positive and bounded away from $0$, uniformly with respect to the choice of the geodesic (see for example~\cite{donnay2003anosov} or~\cite{magalhaes2013geometry}). In the following, we write $K(t) := K^\epsilon(q^\epsilon(t))$.

Applying a homothety to $\Sigma$ if necessary, we may assume that $t^{\mathrm{max}}$ given by Lemma~\ref{lemmaFiniteHorizon} is less than $\frac{1}{3}$. If $\int_0^1 \abs{K(t)} \mathrm dt \leq 3\kappa^2$, then Lemma~\ref{straightGeodesic} tells us that, for any small enough $\epsilon$, $(\pi_*(q(t), p(t)))_{t \in \left[\frac{1}{3}, \frac{2}{3} \right]}$ is $C^1$-close to a straight line in $\mathbb T^2$, which contradicts Lemma~\ref{lemmaFiniteHorizon}. Thus, there exists $\epsilon_0 > 0$ such that for all $\epsilon \leq \epsilon_0$, $\int_0^1 \abs{K(t)} \mathrm dt \geq 3\kappa^2$. Since $K \leq 0$ in $Z_\delta$ and $\abs{K} \leq \kappa^2$ outside $Z_\delta$, we deduce that $K \leq \kappa^2$ in $\Sigma_\epsilon$. Therefore, considering the positive and negative parts of $K$, \[ \int_0^1 K = \int_0^1 (K^+ - K^-) = - \int_0^1 \abs{K} + 2 \int_0^1 K^+ \leq -3 \kappa^2 + 2 \kappa^2 \leq - \kappa^2. \]

Now, let us show that $u(1) \geq \kappa^2/2$, which will end the proof. To do this, we assume that $u(1) < \kappa^2/2$ and show that for all $t \in [0, 1]$, $\abs{u(t)} \leq 2 \kappa^2$. Let $t^1 = \sup \setof{t \in [0, 1]}{u(t) \geq 2 \kappa^2}$ (or $t^1 = 0$ if this set is empty). For $t \in [t^1, 1]$, $u'(t) = - K (t) - u^2(t) \geq - K(t) - 4 \kappa^4$, so $u(1) - u(t^1) \geq - \int_{t^1}^1 K(t) \mathrm dt - 4 \kappa^4$, whence \[u(t^1) \leq u(1) + \int_{t^1}^1 K(t) \mathrm dt + 4 \kappa^4 \leq \kappa^2/2 + \kappa^2 + 4 \kappa^4 < 2 \kappa^2. \] This implies, with the definition of $t^1$, that $t^1 = 0$ and $u(t) \leq 2 \kappa^2$ for all $t \in [0, 1]$. Thus,
\[ u(1) \geq u(0) - \int_0^1 K(t) - 4 \kappa^4 \geq \kappa^2 - 4 \kappa^4 \geq \kappa^2/2, \]
a contradiction.
\end{proof}

\section{Application to linkages} \label{sectLinkages}

The aim of this section is to prove that the configuration space of the linkage described in Theorem~\ref{thmLinkage} is isometric to an immersed surface in $\mathbb T^2 \times \mathbb R$ which satisfies the $4$ assumptions of Theorem~\ref{thmAnosov}.

The configuration space $\mathrm{Conf}(\mathcal L)$ is the set of all $(a, b, c, d, e, f, g) \in \mathbb R^7$ such that:
\[ (a+2)^2 + f^2 = (b-2)^2 + g^2 = 1; \]
\[ (a-d)^2 + e^2 = (b-d)^2 + e^2 = l^2; \]
\[ d^2 + (c-e)^2 = r^2. \]

Notice that $(a+2, f)$ and $(b-2, g)$ lie in the unit circle $\mathbb T \subseteq \mathbb R^2$. Thus, $\mathrm{Conf}(\mathcal L)$ is in fact a subset of $\mathbb T^2 \times \mathbb R^3$ and any of its elements may be written $(\theta, \phi, c, d, e)$, with the identification $a = -\cos \theta - 2$, $f = \sin \theta$, $b = \cos \phi + 2$, $g = \sin \phi$.

\begin{fact} \label{graphe1}
For all $\mathcal C_0 \in \mathrm{Conf}(\mathcal L)$ such that $e \neq 0$ and $e \neq c$, $\mathrm{Conf}(\mathcal L)$ is locally a smooth graph above $\theta$ and $\phi$ near $\mathcal C_0$. More precisely, there exists a neighborhood $U$ of $\mathcal C_0$ in $\mathrm{Conf}(\mathcal L)$, an open set $V$ of $\mathbb T^2$ and a smooth function $F : V \to \mathbb R^3$ such that
\[ U = \setof{(\mathcal D, F(\mathcal D))}{\mathcal D \in V}. \]
\end{fact}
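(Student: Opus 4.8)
The plan is to apply the implicit function theorem, using the given parametrization to absorb the two circle constraints and reduce the problem to solving the remaining three equations for the three unknowns $(c,d,e)$ as smooth functions of $(\theta,\phi)$.

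First I would recall that setting $a=-\cos\theta-2$, $f=\sin\theta$, $b=\cos\phi+2$, $g=\sin\phi$ automatically solves $(a+2)^2+f^2=1$ and $(b-2)^2+g^2=1$, and exhibits $\mathrm{Conf}(\mathcal L)$ as a subset of $\mathbb T^2\times\mathbb R^3$ with coordinates $(\theta,\phi,c,d,e)$. In these coordinates the three remaining constraints become the zero set of the smooth map $G=(G_1,G_2,G_3):\mathbb T^2\times\mathbb R^3\to\mathbb R^3$ given by
\[ G_1=(a-d)^2+e^2-l^2,\quad G_2=(b-d)^2+e^2-l^2,\quad G_3=d^2+(c-e)^2-r^2, \]
where $a=a(\theta)$ and $b=b(\phi)$ are the smooth functions above. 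It therefore suffices to show that the partial Jacobian $\partial G/\partial(c,d,e)$ is invertible at $\mathcal C_0$: the implicit function theorem then produces the open set $V\subseteq\mathbb T^2$ and the smooth $F=(c,d,e):V\to\mathbb R^3$, and the graph description of $U$ follows immediately.

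Next I would compute this Jacobian directly. The matrix with rows $G_1,G_2,G_3$ and columns $c,d,e$ is
\[ \begin{pmatrix} 0 & -2(a-d) & 2e \\ 0 & -2(b-d) & 2e \\ 2(c-e) & 2d & -2(c-e) \end{pmatrix}, \]
and expanding along the first column (whose only nonzero entry is $2(c-e)$) gives
\[ \det \frac{\partial G}{\partial(c,d,e)} = 8\,e\,(c-e)\,(b-a). \]
This vanishes precisely when one of the three factors $e$, $c-e$, $b-a$ is zero.

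The only structural input needed is that $b-a$ never vanishes on $\mathrm{Conf}(\mathcal L)$: since $a=-\cos\theta-2\in[-3,-1]$ and $b=\cos\phi+2\in[1,3]$, one has $b-a\geq 2>0$ for every configuration, so subtracting $G_1$ from $G_2$ always determines $d=(a+b)/2$ without degeneracy. The two remaining factors are exactly the standing hypotheses $e\neq 0$ and $e\neq c$. Hence the determinant is nonzero at $\mathcal C_0$ and the implicit function theorem applies. I expect no genuine obstacle here: the computation is routine, and the single point deserving attention is recording the ranges of $a$ and $b$ to guarantee $b\neq a$ (geometrically, that the two length-$l$ arms attach to genuinely distinct base points), which is precisely what makes the difference $G_2-G_1$ solvable for $d$.
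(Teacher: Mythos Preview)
Your proof is correct. The paper proceeds instead by writing down the explicit solution: from $G_1-G_2=0$ one gets $d=(a+b)/2=(-\cos\theta+\cos\phi)/2$, then $e=\pm\sqrt{l^2-((\cos\theta+\cos\phi+4)/2)^2}$ from $G_1=0$, and finally $c=e\pm\sqrt{r^2-((\cos\theta-\cos\phi)/2)^2}$ from $G_3=0$, with signs chosen according to $\mathcal C_0$. The two approaches are essentially equivalent---your nonvanishing factors $e$, $c-e$, $b-a$ are exactly what make the paper's square roots and the division by $b-a$ legitimate and smooth---but the explicit formulas have the advantage that they are reused later in the paper (for instance in the curvature computation at the boundary), so you might still want to record them.
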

\begin{proof}
The function $F$ is given by the following formulae:
\[ \small \begin{aligned}d & = \frac{-\cos \theta+\cos \phi}{2};
\\ e & = \pm \sqrt{l^2 - \left(\frac{\cos \theta+\cos \phi}{2}+2\right)^2} = \pm \sqrt{l^2 - \left(\frac{\cos \theta+\cos \phi+4}{2}\right)^2};
\\ c & = e \pm \sqrt{r^2 - \left(\frac{\cos \theta-\cos \phi}{2}\right)^2} = \pm \sqrt{l^2 - \left(\frac{\cos \theta+\cos \phi + 4}{2}\right)^2} \pm \sqrt{r^2 - \left(\frac{\cos \theta-\cos \phi}{2}\right)^2} \end{aligned} \]
where the choices of the signs are made according to $\mathcal C_0$.
\end{proof}

\begin{fact} \label{graphe2}
\begin{enumerate}[a)]
\item \label{graphe2a} For all $\mathcal C_0 \in \mathrm{Conf}(\mathcal L)$ such that $(-\cos \theta-2, 0)$, $(d, e)$ and $(0, c)$ are not aligned, and such that $\phi \neq 0 \mod \pi$, $\mathrm{Conf}(\mathcal L)$ is locally a smooth graph above $\theta$ and $c$ near $\mathcal C_0$.
\item \label{graphe2b} For all $\mathcal C_0 \in \mathrm{Conf}(\mathcal L)$ such that $(\cos \phi+2, 0)$, $(d, e)$ and $(0, c)$ are not aligned, and such that $\theta \neq 0 \mod \pi$, $\mathrm{Conf}(\mathcal L)$ is locally a smooth graph above $\phi$ and $c$ near $\mathcal C_0$.
\end{enumerate}
\end{fact}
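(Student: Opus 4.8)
The plan is to apply the implicit function theorem to the three remaining constraints. Near $\mathcal C_0$, in the coordinates $(\theta, \phi, c, d, e)$, the set $\mathrm{Conf}(\mathcal L)$ is the common zero locus of
\[ E_1 = (a-d)^2 + e^2 - l^2, \quad E_2 = (b-d)^2 + e^2 - l^2, \quad E_3 = d^2 + (c-e)^2 - r^2, \]
where $a = -\cos\theta - 2$ and $b = \cos\phi + 2$ are understood as functions of $\theta$ and $\phi$. To realize $\mathrm{Conf}(\mathcal L)$ as a graph above $(\theta, c)$ for part~\ref{graphe2a}, it is enough to show that the Jacobian $\partial(E_1, E_2, E_3)/\partial(\phi, d, e)$ is invertible at $\mathcal C_0$; the implicit function theorem then yields smooth functions expressing $(\phi, d, e)$ in terms of $(\theta, c)$ on a neighborhood, which is precisely the asserted graph structure.

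First I would compute this Jacobian. Only $E_2$ depends on $\phi$, through $b$ with $\partial b/\partial\phi = -\sin\phi$, so the first column has the single nonzero entry $-2(b-d)\sin\phi$; expanding the determinant along that column gives
\[ \det \frac{\partial(E_1,E_2,E_3)}{\partial(\phi,d,e)} = -8\,(b-d)\,\sin\phi\,\bigl[(a-d)(e-c) + ed\bigr]. \]
The factor $\sin\phi$ vanishes exactly when $\phi \equiv 0 \pmod{\pi}$, which is excluded by hypothesis. The bracketed factor expands to $ae - ac + dc$, which is precisely the collinearity determinant (twice the signed area) of the three points $(a,0)$, $(d,e)$ and $(0,c)$; by hypothesis these are not aligned, so this factor is nonzero as well.

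It remains only to see that the factor $(b-d)$ never vanishes on $\mathrm{Conf}(\mathcal L)$. Subtracting $E_1 = 0$ from $E_2 = 0$ yields $(a-d)^2 = (b-d)^2$, so either $a = b$ or $d = (a+b)/2$. But $a = -\cos\theta - 2 \leq -1$ while $b = \cos\phi + 2 \geq 1$, hence $a < b$ and the first alternative is impossible; therefore $d = (a+b)/2$ and $b - d = (b-a)/2 > 0$. This makes the determinant nonzero at $\mathcal C_0$, and the implicit function theorem completes part~\ref{graphe2a}. Part~\ref{graphe2b} follows by the symmetric argument, exchanging the roles of $\theta$ and $\phi$ (equivalently of $a$ and $b$) and solving instead for $(\theta, d, e)$ above $(\phi, c)$.

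The only genuinely delicate point is matching the two geometric degeneracy conditions in the hypotheses with the two algebraic factors $\sin\phi$ and $ae - ac + dc$ of the determinant; everything else is routine. The automatic non-vanishing of $(b-d)$ is the single place where the specific geometry of this linkage is used, namely that $a$ and $b$ are confined to opposite sides ($a \leq -1$ and $b \geq 1$), which forces $d$ to be their midpoint on the configuration space.
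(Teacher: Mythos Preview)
Your proof is correct. The paper's argument is a two-step variant of the same implicit function theorem idea: first it observes that $(d,e)$ is the intersection of the circle of radius $l$ about $(a,0)$ with the circle of radius $r$ about $(0,c)$, so that $(d,e)$ depends smoothly on $(\theta,c)$ precisely when this intersection is transverse (i.e.\ when the three points $(a,0)$, $(d,e)$, $(0,c)$ are not aligned); then it recovers $\phi=\pm\arccos(2d+\cos\theta)$, which is smooth exactly when $\sin\phi\neq 0$. Your single $3\times 3$ Jacobian computation packages both steps at once: the expansion along the $\phi$-column is exactly this block-triangular splitting, and your two nontrivial factors $\sin\phi$ and $ae-ac+dc$ are the paper's two smoothness conditions. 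Your explicit verification that $b-d\neq 0$ (via $a\leq -1<1\leq b$ forcing $d=(a+b)/2$) is a point the paper leaves implicit, so in that respect your write-up is actually a bit more complete.
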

\begin{proof}
By symmetry, we only need to prove the first statement. The idea of the proof is the same as for Fact~\ref{graphe1}: on the one hand, the numbers $d$ and $e$ are obtained as the simple roots of a polynomial of degree $2$, so they vary smoothly with respect to $\theta$ and $c$; on the other hand, $\phi = \pm \arccos(2d + \cos \theta)$ where the choice of the sign is made according to $\mathcal C_0$.
\end{proof}

\begin{fact} \label{graphe3} For all $\mathcal C_0 \in \mathrm{Conf}(\mathcal L)$, $\mathrm{Conf}(\mathcal L)$ is locally a smooth graph near $\mathcal C_0$:
\begin{enumerate}
\item either above $\theta$ and $\phi$,
\item or above $\theta$ and $c$,
\item or above $\phi$ and $c$.
\end{enumerate} \end{fact}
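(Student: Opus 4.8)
The plan is to show that the three charts provided by Fact~\ref{graphe1} and by the two parts of Fact~\ref{graphe2} already cover all of $\mathrm{Conf}(\mathcal L)$, i.e. that at every configuration $\mathcal C_0$ the hypotheses of at least one of them hold. Equivalently, I will prove that the exceptional set on which all three fail simultaneously is empty, using the inequalities $l+r>3$, $l<3$ and $r<1/2$ from Theorem~\ref{thmLinkage}. So suppose $\mathcal C_0$ lies in this exceptional set. Failure of Fact~\ref{graphe1} forces $e=0$ or $e=c$; failure of the first part of Fact~\ref{graphe2} forces that $(a,0)$, $(d,e)$, $(0,c)$ are aligned or that $\phi\equiv0\pmod\pi$; and failure of the second part forces that $(b,0)$, $(d,e)$, $(0,c)$ are aligned or that $\theta\equiv0\pmod\pi$, where $a=-\cos\theta-2$ and $b=\cos\phi+2$.

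First I would record two identities that make the bookkeeping tractable. As $(d,e)$ is equidistant from $(a,0)$ and $(b,0)$, it lies on their perpendicular bisector, so $d=\tfrac{a+b}{2}=\tfrac{\cos\phi-\cos\theta}{2}$; and the constraint $d^2+(c-e)^2=r^2$ yields $\abs d\le r<1/2$. With these, the alignment conditions become explicit. If $e=0$, the points $(a,0)$ and $(d,0)$ both lie on the $x$-axis and are distinct (since $\abs{a-d}=l>0$), so either alignment holds if and only if $c=0$. If instead $e=c\ne0$, the segment from $(d,e)$ to $(0,c)$ is horizontal at height $e\ne0$, so neither $(a,0)$ nor $(b,0)$ lies on it; both alignments are then impossible, so failure of the two parts of Fact~\ref{graphe2} forces $\cos\theta=\pm1$ and $\cos\phi=\pm1$.

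It then remains to eliminate the two cases. When $e=0$, the vanishing of $e^2=l^2-\bigl(\tfrac{\cos\theta+\cos\phi+4}{2}\bigr)^2$ gives $\cos\theta+\cos\phi=2(l-2)$, which lies in $(1,2)$: indeed $r<1/2$ and $l+r>3$ force $l>5/2$, while $l<3$ gives $2(l-2)<2$. If moreover $c=0$, then $d^2=r^2$ and hence $\{\cos\theta,\cos\phi\}=\{(l-2)+r,(l-2)-r\}$, which is absurd because $(l-2)+r=(l+r)-2>1$; and if $c\ne0$ the alignments fail, forcing $\cos\theta,\cos\phi\in\{-1,1\}$, which contradicts $\cos\theta+\cos\phi\in(1,2)$. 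When $e=c\ne0$ we already have $\cos\theta,\cos\phi\in\{-1,1\}$, so $d=\tfrac{\cos\phi-\cos\theta}{2}\in\{-1,0,1\}$; but $\abs d=r\in(0,1/2)$ excludes every one of these values. Each branch thus closes, the exceptional set is empty, and one of the three charts always applies. I expect the only real difficulty to be the organization of this case analysis, making sure that each branch is genuinely killed by one of $l+r>3$, $l<3$, $r<1/2$ rather than by any delicate estimate.
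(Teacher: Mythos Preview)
Your proof is correct and follows essentially the same strategy as the paper: assume all three charts fail simultaneously and derive a contradiction from the length constraints $l+r>3$, $l<3$, $r<1/2$. You branch first on the failure of Fact~\ref{graphe1} (the dichotomy $e=0$ versus $e=c\ne 0$) and then analyze the alignment/angle conditions, whereas the paper branches first on whether $\theta\equiv\phi\equiv 0\pmod\pi$ and then invokes the failure of Fact~\ref{graphe1}; the underlying computations and the way each inequality is used are the same, and your version spells out a bit more explicitly why each branch closes.
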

\begin{proof}
Assume the opposite. Then the hypotheses of Fact~\ref{graphe2} are not satisfied. If $\phi = \theta = 0 \mod \pi$, then $\phi = \theta \mod 2 \pi$ because $r < 1/2$; then $\phi = \theta = \pi \mod 2\pi$ because $l < 3$, but this implies that $e \neq 0$ and $e \neq c$, so Fact~\ref{graphe1} applies, which is impossible. Therefore, by symmetry, we may assume that $(-\cos \theta-2, 0)$, $(d, e)$ and $(0, c)$ are aligned. Now, with Fact~\ref{graphe1}, we have either $e = 0$ or $e = c$. In both cases, $(-\cos \theta-2, 0)$, $(d, e)$ and $(0, c)$ are all on the line $y = 0$, which contradicts the fact that $l + r > 3$.
\end{proof}

Fact~\ref{graphe3} implies in particular that $\mathrm{Conf}(\mathcal L)$ is a smooth submanifold of $\mathbb T^2 \times \mathbb R^3$.

As explained in the introduction, $\mathrm{Conf}(\mathcal L)$ is endowed with the metric which corresponds to its kinetic energy (recall that the masses of the vertices are $\epsilon^2$ at $(0, c)$, $0$ at $(d, e)$, and $1$ everywhere else):
\[ g_\epsilon = da^2 + df^2 + db^2 + dg^2 + \epsilon^2 dc^2 = d\theta^2 + d\phi^2 + \epsilon^2 dc^2. \]

Fact~\ref{graphe3} shows that the metric $g_\epsilon$ is nondegenerate (although it is induced by a degenerate metric of $\mathbb T^2 \times \mathbb R^3$!), so with Fact~\ref{geodesicbehavior} the physical behavior of the linkage is the geodesic flow on $(\mathrm{Conf}(\mathcal L), g_\epsilon)$. Our aim is to show that it is an Anosov flow by applying Theorem~\ref{thmAnosov}.

Consider the projection onto the first coordinates:
\[ \begin{aligned} p: \mathbb T^2 \times \mathbb R^3 & \to \mathbb T^2 \times \mathbb R
\\ (\theta, \phi, c, d, e) & \mapsto (\theta, \phi, c). \end{aligned} \]

Again with Fact~\ref{graphe3}, $p|_{\mathrm{Conf}(\mathcal L)}$ is an immersion: ${\mathrm{Conf}(\mathcal L)}$ is isometric to a smooth surface $\Sigma$ immersed in $\mathbb T^2 \times \mathbb R$, endowed with the metric $g_\epsilon = d\theta^2 + d\phi^2 + \epsilon^2 dc^2$. We shall now call $z$ the third coordinate instead of $c$, to be consistent with the notations of Theorem~\ref{thmAnosov}.

Denote by $\pi: \mathbb T^2 \times \mathbb R \to \mathbb T^2$ the projection onto the first coordinates. The surface $\Sigma$ projects to a smooth billiard table:
\[ D = \pi(\mathrm{Conf}(\mathcal L)) = \setof{ (\theta, \phi) \in \mathbb T^2 }{ \abs{\cos \theta - \cos \phi} \leq 2r, \quad \cos \theta + \cos \phi \leq 2l - 4} \]
Its boundary has three connected components in $\mathbb T^2$: $\{\cos \theta - \cos \phi = 2r\}$, $\{-\cos \theta + \cos \phi = 2r\}$, and $\{\cos \theta + \cos \phi = 2l - 4\}$.

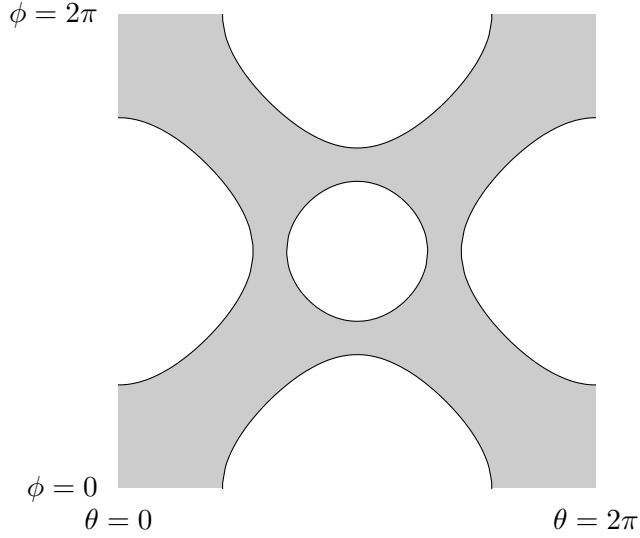
\begin{figure}[h!]
\begin{center}
\begin{tikzpicture}
  \draw (-pi, -pi) node[label=below:{$\theta = 0$}] {};
  \draw (pi, -pi) node[label=below:{$\theta = 2\pi$}] {};
  \draw (-pi, -pi) node[label=left:{$\phi = 0$}] {};
  \draw (-pi, pi) node[label=left:{$\phi = 2\pi$}] {};
  \clip (-pi, -pi) rectangle (pi, pi);
  \fill [color=black!20] (-pi, -pi) rectangle (pi, pi);

\obstacle{0.8}{-pi}{0}
\obstacle{0.8}{pi}{0}
\obstacle{0.8}{0}{pi}
\obstacle{0.8}{0}{-pi}
\obstacle{1.6}{0}{0}
\end{tikzpicture}
\caption{The billiard table $D$ (in grey) for $r = 0.4$ and $l = 2.8$. The billiard has negatively curved walls, which means that the obstacles are strictly convex.} \label{figBilliard}
\end{center}
\end{figure}

There remains to show that the immersed surface $\Sigma$ satisfies the 4 assumptions of Theorem~\ref{thmAnosov}. Assumption~\ref{pasverticalsaufaubord} is satisfied as a direct consequence of Fact~\ref{graphe1}. The following proposition proves Assumption~\ref{courbureverticalenonnulle}.
\begin{prop}
For all $q \in \pi^{-1}(\partial D) \cap \Sigma$, the curvature of $\Sigma \cap V$ is nonzero at $q$, where $V$ is a neighborhood of $q$ in the affine plane $q + \mathrm{Vect}(e_z, (T_q\Sigma)^\bot)$.
\end{prop}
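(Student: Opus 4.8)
The plan is to reduce the statement to a two–dimensional computation: I will show that the transverse profile of $\Sigma$ over the inward normal to $\partial D$ is, to leading order, a nondegenerate parabola, and that the slicing plane $V$ is exactly the vertical plane carrying this profile.

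First I would record the local structure of $\Sigma$ near $\partial D$. Writing $w = \tfrac{\cos\theta+\cos\phi+4}{2}$ and $d = \tfrac{-\cos\theta+\cos\phi}{2}$, Fact~\ref{graphe1} gives on $\Sigma$
\[ e = \pm\sqrt{l^2 - w^2}, \qquad z = c = e \pm\sqrt{r^2 - d^2}. \]
The three components of $\partial D$ are $\{\cos\theta+\cos\phi = 2l-4\}$, on which $l^2-w^2$ vanishes, and $\{\pm(\cos\theta-\cos\phi)=2r\}$, on which $r^2-d^2$ vanishes. Since the walls form a \emph{smooth} billiard, these components are pairwise disjoint, so at any $q\in\pi^{-1}(\partial D)\cap\Sigma$ exactly one of the two square roots degenerates while the other stays smooth and bounded away from $0$. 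Here the numerical hypotheses enter: $l+r>3$ with $r<1/2$ forces $l>5/2$, which (together with $l<3$) prevents $\nabla w$ and $\nabla d$ from vanishing on the relevant component inside $\overline D$ and guarantees disjointness of the walls.

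Next I would identify $V$. Fix $q$ and let $\vec n$ be the horizontal unit normal to $\partial D$ at $\pi(q)$. Parametrising the inward normal line by its arclength $s$, the degenerating factor vanishes to first order in $s$ (its horizontal gradient is nonzero, by the previous paragraph), so the corresponding square root behaves like $\sqrt s$; hence the transverse profile of $\Sigma$ has a \emph{vertical} tangent at $q$, which shows $e_z\in T_q\Sigma$. On the other hand the fold curve $\pi^{-1}(\partial D)\cap\Sigma$ is smooth and projects onto $\partial D$, so its tangent at $q$ has nonzero horizontal part equal to $T_{\pi(q)}\partial D$; combined with $e_z$ this yields $T_q\Sigma = \mathrm{Vect}\bigl(e_z, T_{\pi(q)}\partial D\bigr)$, whence $(T_q\Sigma)^\bot = \mathbb R\,\vec n$ is horizontal. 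Therefore $V = q + \mathrm{Vect}(e_z,\vec n)$ is the vertical plane over the normal line $\pi(q)+\mathbb R\vec n$, and $\Sigma\cap V$ is precisely the lift to $\Sigma$ of that normal line — the transverse profile.

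Finally I would compute the curvature of this profile. In coordinates $(s,z)$ on $V$, the two sheets meeting at $q$ are the $\pm$ branches of the degenerating square root added to the smooth one, so $z - z_0 = \pm\sqrt{\mu\,s}\,(1+o(1)) + O(s)$, where $\mu$ is the product of the nonvanishing smooth factor (of size $2l$ or $2r$) with the nonzero transverse derivative of the degenerating factor; in particular $\mu>0$. Inverting, $s = \mu^{-1}(z-z_0)^2 + O\bigl((z-z_0)^3\bigr)$, so at the vertex $q$ one has $\tfrac{ds}{dz}=0$ and $\tfrac{d^2s}{dz^2}=2/\mu$, giving curvature $|2/\mu|\neq 0$. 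The only genuine work is the nondegeneracy check $\mu\neq 0$, which is exactly where the explicit bounds on $l$ and $r$ are used; the rest is bookkeeping with the two square roots.
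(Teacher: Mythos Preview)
Your proposal is correct and follows essentially the same route as the paper: parametrise the section $\Sigma\cap V$ by the arclength $s$ along the inward normal to $\partial D$, observe that exactly one of the two square roots in the formula for $z$ degenerates like $\sqrt{\mu s}$ with $\mu\neq 0$ (the nondegeneracy coming from $2<l<3$, resp.\ $0<r<1$, together with $(\sin\theta,\sin\phi)\neq(0,0)$ on the relevant wall), and invert to get $s\sim\mu^{-1}(z-z_0)^2$ with nonzero second derivative. The paper carries this out for the wall $\{\cos\theta+\cos\phi=2l-4\}$ and declares the other two analogous; you treat the three walls uniformly via the variables $w,d$ and are more explicit about why $V$ is the vertical plane over the billiard normal (using $N_z(q)=0$ to see that $(T_q\Sigma)^\perp$ is horizontal), but the core computation is identical.
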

\begin{proof}
Here we will assume that $\pi(q) \in \setof{(\theta, \phi) \in \mathbb T^2}{\cos \phi + \cos \theta = 2l - 4}$, but the proof is identical for the other components of $\partial D$. Let $F(\theta, \phi) = \left(\frac{\cos \theta + \cos \phi + 4}{2}\right)^2$. For any small $t \geq 0$, let $\theta(t) = q_\theta + N_\theta(q)t$, $\phi(t) = q_\phi + N_\phi(q)t$, and choose $z(t)$ of the form:
\[ z(t) = \pm \sqrt{l^2 - \left(\frac{\cos \theta(t)+\cos \phi(t) + 4}{2}\right)^2} \pm \sqrt{r^2 - \left(\frac{\cos \theta(t)-\cos \phi(t)}{2}\right)^2}\]
with a choice of the $\pm$ signs so that $(\theta(0), \phi(0), z(0)) = q$. Then for all small $t \geq 0$, $(\theta(t), \phi(t), z(t)) \in \Sigma$.

As $t$ tends to $0$, we may estimate:
\[ z(t) = \pm \sqrt{\left(\left.\frac{\mathrm{d}}{\mathrm{d}t}\right|_{t = 0} F(\theta(t), \phi(t))\right)t + O(t^2)} \pm \sqrt{r^2 - \left(\frac{\cos \theta(0)-\cos \phi(0)}{2}\right)^2} + O(t) \]
\[ (z(t) - z(0))^2 = \pm \left(\left.\frac{\mathrm{d}}{\mathrm{d}t}\right|_{t = 0}
 F(\theta(t), \phi(t))\right)t + o(t). \]
Notice that $\nabla F(\theta(0), \phi(0)) = - \begin{pmatrix}\sin \theta(0) \\ \sin \phi(0)\end{pmatrix} \left(\frac{\cos \theta + \cos \phi + 4}{2}\right)$ is nonzero (because $2 < l < 3$). Moreover, $\begin{pmatrix}\theta'(0) \\ \phi'(0) \end{pmatrix}$ is $\begin{pmatrix}N_\theta(q) \\ N_\phi(q)\end{pmatrix}$, which is colinear to $\nabla F(\theta(0), \phi(0))$, so
\[ \left.\frac{\mathrm{d}}{\mathrm{d}t}\right|_{t = 0} F(\theta(t), \phi(t)) = \begin{pmatrix}\theta'(0)\\ \phi'(0)\end{pmatrix} \cdot \nabla F(\theta(0), \phi(0)) \neq 0. \] This gives us:
\[ t \sim \pm \frac{1}{\left.\frac{\mathrm{d}}{\mathrm{d}t}\right|_{t = 0} F(\theta(t), \phi(t))} (z(t) - z(0))^2. \]

Hence, $t \mapsto z(t)$ has an inverse function $z \mapsto t(z)$ which has a nonzero second derivative at $t = 0$. Since $(z, t)$ are the coordinates in an affine (orthonormal) basis of $q + \mathrm{Vect}(e_z, (T_q\Sigma)^\bot)$, this implies that $\Sigma \cap V$ has nonzero curvature at $q$.
\end{proof}

The following proposition proves Assumption~\ref{billardcourburenegative}.

\begin{prop}
The walls of the billiard $D$ have negative curvature.
\end{prop}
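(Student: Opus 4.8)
The plan is to realize each of the three walls as a regular level curve of a smooth function on the flat torus $\mathbb{T}^2$ (with coordinates $(\theta,\phi)$ and metric $d\theta^2 + d\phi^2$), and to read off its sign of curvature from the standard formula for an implicitly defined plane curve. If a wall is $\{f = 0\}$ with $D$ lying locally in $\{f \leq 0\}$, the inward unit normal is $N = -\nabla f / \norm{\nabla f}$, and the curvature $\prodscal{dT/ds}{N}$ is
\[ \kappa = \frac{f_\theta^2 f_{\phi\phi} - 2 f_\theta f_\phi f_{\theta\phi} + f_\phi^2 f_{\theta\theta}}{\norm{\nabla f}^3}. \]
Before using it, I would calibrate the sign by testing it on a round disc $\{x^2 + y^2 - R^2 \leq 0\}$, where it returns $\kappa = 1/R > 0$; this agrees with the convention of the paper that the walls of a disc are positively curved, so \emph{negative curvature of a wall means this numerator is negative}. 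Since the billiard is already known to be smooth, $\nabla f$ never vanishes on the walls and the denominator is harmless: it suffices to prove the numerator is everywhere negative on each component.

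For the third component $\{\cos\theta + \cos\phi = 2l - 4\}$ I take $f = \cos\theta + \cos\phi - (2l-4)$. The mixed second derivative vanishes, so the numerator collapses to $-\sin^2\theta\cos\phi - \sin^2\phi\cos\theta$, and the whole point becomes showing that $\cos\theta$ and $\cos\phi$ are \emph{strictly positive} on this wall. On it, the constraint $\abs{\cos\theta - \cos\phi}\leq 2r$ forces $\cos\theta,\cos\phi \geq l - 2 - r$, and the hypotheses $l + r > 3$ and $r < 1/2$ give $l > 3 - r$, hence $l - 2 - r > 1 - 2r > 0$. The numerator is therefore $\leq 0$, and it can only vanish if $\sin\theta = \sin\phi = 0$ with positive cosines, i.e.\ $\theta = \phi = 0$, which forces $2 = 2l-4$, contradicting $l < 3$. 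Thus $\kappa < 0$ on this wall.

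For the first component $\{\cos\theta - \cos\phi = 2r\}$ I take $g = \cos\theta - \cos\phi - 2r$; again the mixed term drops and the numerator is $\sin^2\theta\cos\phi - \sin^2\phi\cos\theta$, whose sign is not obvious by inspection. Here I would use the defining relation to eliminate a variable: substituting $\cos\theta = \cos\phi + 2r$ and writing $u = \cos\phi$, a short expansion collapses the numerator to $-2r\,(u^2 + 2ru + 1)$, and since $u^2 + 2ru + 1 = (u+r)^2 + (1 - r^2) > 0$ for $r < 1$, this is strictly negative. The second component $\{-\cos\theta + \cos\phi = 2r\}$ then needs no new computation: the involution $\theta \leftrightarrow \phi$ preserves $D$ and exchanges the two difference-walls, so it has negative curvature as well.

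Every step reduces to elementary trigonometry, so there is no deep difficulty; the only real obstacle is bookkeeping. The part that genuinely requires care is fixing the orientation convention correctly (which is why I would calibrate the formula against the disc rather than trust a remembered sign), and, for the sum-wall, extracting the positivity $\cos\theta,\cos\phi > 0$ from the precise length inequalities instead of reading it off the picture. The factorization $-2r(u^2 + 2ru + 1)$ on the difference-walls is the one spot where substituting the wall equation turns an ambiguous expression into a manifestly negative one, and it is the step I would write out in full.
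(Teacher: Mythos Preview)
Your proof is correct and follows essentially the same approach as the paper: both compute the implicit-curve curvature (the paper phrases it as the divergence of the normalized gradient, which expands to exactly your numerator), and for the difference-walls both substitute the wall relation to reduce to a one-variable quadratic whose negativity follows from $r<1$. The only minor difference is on the sum-wall $\{\cos\theta+\cos\phi=2l-4\}$: the paper substitutes $\cos\phi=2l-4-\cos\theta$ and checks the resulting quadratic in $\cos\theta$ has negative discriminant (using only $2<l<3$), whereas you deduce $\cos\theta,\cos\phi>0$ from the remaining billiard constraint $\abs{\cos\theta-\cos\phi}\le 2r$ together with $l+r>3$ and $r<1/2$; both arguments are valid under the hypotheses of Theorem~\ref{thmLinkage}.
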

\begin{proof}
In general, the curvature of the boundary of a set defined by the inequality $F(q) \leq C$, where $C \in \mathbb R$ is a constant, with the normal vector pointing inwards, is the divergence of the normalized gradient of $F$, namely:
\[ \nabla \cdot \frac{\nabla F}{\norm{\nabla F}}. \]

First consider the boundary of the set $\{ \cos \phi + \cos \theta \leq 2l-4 \}$. Here $F(\phi, \theta) = \cos \phi + \cos \theta$. Thus:

\[ \frac{\nabla F}{\norm{\nabla F}} = \frac{-1}{\sqrt{\sin^2 \theta + \sin^2 \phi}} \begin{pmatrix}\sin \theta \\ \sin \phi\end{pmatrix}. \]

Hence, the divergence of the normalized gradient has the same sign as:
\[ -\sin^2 \phi \cos \theta - \sin^2 \theta \cos \phi \]
which can be rewritten:
\[ -(2l-4)\cos^2\theta +(2l-4)^2\cos \theta - (2l-4). \]
This is a second order polynomial in $\cos \theta$ with discriminant $(2l-4)^2((2l-4)^2-4) < 0$ (here we use the assumption $l < 3$). Since $(2l-4) > 0$ (because $l > 2$), the polynomial is everywhere negative.

Now, consider the boundary of the set $\{ \cos \phi - \cos \theta \leq 2r \}$. This time, the divergence of the normalized gradient has the same sign as
\[ \sin^2 \phi \cos \theta - \sin^2 \theta \cos \phi \]
which can be rewritten
\[ -2r \cos^2\theta - 4r^2 \cos \theta - 2r. \]
This time, the discriminant is $16r^2(r^2-1)$, which is negative since $r < 1$.

The third wall is the boundary of the set $\{ \cos \theta - \cos \phi \leq 2r \}$. The divergence of the normalized gradient has the same sign as
\[ -\sin^2 \phi \cos \theta + \sin^2 \theta \cos \phi \]
which can be rewritten
\[ -2r \cos^2\theta + 4r^2 \cos \theta - 2r. \]
Again, the discriminant is $16r^2(r^2-1)$, which is negative.
\end{proof}

Finally, we prove Assumption~\ref{horizonfini}.
\begin{prop}
If $(l-2)^2 + r^2 < 1$ and $r < 1/2$, then $D$ has finite horizon.
\end{prop}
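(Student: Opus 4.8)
The plan is to work in the coordinates $x=\cos\theta$, $y=\cos\phi$, so that a point of $\mathbb T^2$ lies in $D$ exactly when $\abs{x-y}\le 2r$ and $x+y\le 2l-4$. First I would record two elementary consequences of the hypotheses: from $l+r>3$ and $r<1/2$ one gets $l>5/2$, hence $2l-4\in(1,2)$; in particular the point $(0,0)$ satisfies $\cos\theta+\cos\phi=2>2l-4$, so the complement of $D$ is a nonempty open set. A complete geodesic of $\mathbb T^2$ is a line $t\mapsto(\theta_0+at,\phi_0+bt)$ for some direction $(a,b)\neq 0$, and the goal is to show that every such line eventually leaves $D$.

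If the slope $b/a$ is irrational, the line is equidistributed (Weyl/Kronecker), hence dense in $\mathbb T^2$; since the complement of $D$ is open and nonempty, the line meets it, so it leaves $D$. Thus I may assume the slope is rational and, after reparametrizing and dividing out the gcd, that $(a,b)$ is a pair of coprime integers; the line is then a closed geodesic of minimal period $2\pi$ in $t$. This rational case splits into three subcases. If $a=0$ or $b=0$ (the directions $(\pm1,0)$, $(0,\pm1)$), one coordinate is constant, equal to some $c_0\in[-1,1]$, while the other sweeps all of $[-1,1]$, so $\abs{x-y}$ attains $1+\abs{c_0}\ge 1>2r$ and the line leaves $D$. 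If $a,b$ are both nonzero with $\abs a\neq\abs b$, I would argue by averaging over the period: since $a\pm b\neq 0$, the cross term $\int_0^{2\pi}\cos(\theta_0+at)\cos(\phi_0+bt)\,dt$ vanishes while $\int_0^{2\pi}\cos^2=\pi$, so the mean value of $(x-y)^2=(\cos\theta-\cos\phi)^2$ over the closed geodesic equals $1$; as $4r^2<1$, the pointwise bound $(x-y)^2\le 4r^2$ cannot hold everywhere, and again the line leaves $D$.

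The remaining, and genuinely delicate, subcase is that of the diagonal directions $\abs a=\abs b=1$, namely $(1,1)$ and $(1,-1)$; this is exactly where the averaging breaks down (the mean of $(x-y)^2$ can be made arbitrarily small) and where the hypothesis $(l-2)^2+r^2<1$ is used. Here I would compute the two relevant quantities explicitly as single-frequency sinusoids in $t$: for $(1,1)$ one finds $\max_t(x+y)=2\abs{\cos\tfrac{\theta_0-\phi_0}{2}}$ and $\max_t\abs{x-y}=2\abs{\sin\tfrac{\theta_0-\phi_0}{2}}$, with the analogous formulae in the half-angle $\tfrac{\theta_0+\phi_0}{2}$ for $(1,-1)$. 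If the line stayed in $D$, both $x+y\le 2l-4$ and $\abs{x-y}\le 2r$ would hold for every $t$, forcing $\abs{\cos\psi}\le l-2$ and $\abs{\sin\psi}\le r$ for the relevant half-angle $\psi$; adding the squares yields $1=\cos^2\psi+\sin^2\psi\le (l-2)^2+r^2<1$, a contradiction.

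Having exhausted all directions, every complete geodesic leaves $D$, which is precisely the finite-horizon property. The main obstacle is the diagonal subcase: it is the only place where the averaging trick fails and where one must extract the sharp constant $(l-2)^2+r^2$ from an exact amplitude computation rather than from an inequality with room to spare. The axis-parallel and generic-slope subcases, by contrast, only use the crude bound $r<1/2$, and the irrational case is immediate once one notes that $D$ is a proper closed subset.
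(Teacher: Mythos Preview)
Your argument is correct. The paper's proof reaches the same Pythagorean contradiction $(l-2)^2+r^2\ge 1$ in the diagonal case, but organizes the non-diagonal cases differently: instead of your three-way split (irrational via density, rational with $|a|\neq|b|$ via the mean of $(\cos\theta-\cos\phi)^2$, axis-parallel directly), the paper handles all non-diagonal slopes at once by a subgroup trick. It looks at the set $G\subset\mathbb T^1$ of values $\phi(t)-\phi(t_0)$ at the times when $\theta(t)\equiv 0$, observes that $G$ is a subgroup, and uses $\cos\phi\ge 1-2r>0$ at those times to force $G\subset(-\pi/2,\pi/2)$, hence $G=\{0\}$, which pins the slope to $\{-1,0,1\}$. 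That is shorter but less transparent; your averaging/density argument is more elementary and makes the role of $r<1/2$ very visible.

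In the diagonal case the paper picks two specific times $t_1,t_2$ with $\theta+\phi\equiv\pi$ and $\theta+\phi\equiv 0$ and derives $\cos^2\phi(t_1)+\cos^2\phi(t_2)=1$; your sum-to-product computation of the exact amplitudes $2|\cos\psi|$ and $2|\sin\psi|$ gives the same identity more directly and immediately yields the two-sided bound $|\cos\psi|\le l-2$, which in the paper's formulation requires an extra word (choosing the right $t_2$ among the two possibilities so that $\cos\phi(t_2)\ge 0$).

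One small remark: you invoke $l+r>3$ to get $l>5/2$ and hence $2l-4>1$, but that lower bound is never used. All you need for the density step is that $D\neq\mathbb T^2$, and for that $2l-4<2$ suffices, which already follows from $(l-2)^2<1$. So your proof actually uses only the two hypotheses stated in the proposition.
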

\begin{proof}
Assume that there exists a geodesic $(\theta(t), \phi(t))$ with infinite lifetime in the past and in the future.

First, we prove that the slope of the geodesic is $\pm 1$. We may assume that the slope is in $[-1, 1]$ (if not, exchange the roles $\theta$ and $\phi$): thus there is a time $t_0$ for which $\theta(t_0) = 0 \mod 2\pi$. Then the set $G = \setof{\phi(t) - \phi(t_0) \mod 2\pi}{t \in \mathbb R, ~ \theta(t) = 0 \mod 2\pi}$ is a subgroup of $\mathbb R / 2\pi \mathbb Z$. Moreover, for all $t \in G$, we have $\abs{\cos \theta(t) - \cos \phi(t)} \leq 2r$ so $\cos \phi(t) \geq 1 - 2r > 0$, so $G \subseteq \left(-\frac{\pi}{2}, \frac{\pi}{2}\right) \mod 2 \pi$, which means that $G$ is reduced to a single point: the slope is either $0$ or $\pm 1$ (since we assumed it is in $[-1, 1]$). If the slope is $0$, then $\abs{\cos \theta(t) - \cos \phi(t)} \leq 2r$ applied to a $t$ such that $\cos \theta(t) = -1$ gives us $\cos \phi(t) \leq -1 + 2r$, which is not compatible with $\cos \phi(t) \geq 1 - 2r > 0$ since $r < 1/2$, so the slope is in fact $\pm 1$.

Changing $\theta$ into $-\theta$ if necessary, we may assume that the slope is $1$. Thus, there exist $t_1$ and $t_2$ such that $\phi(t_1) + \theta(t_1) = \pi \mod 2\pi$ and $\phi(t_2) + \theta(t_2) = 0 \mod 2\pi$. We have $\theta(t_2) - \theta(t_1) = \phi(t_2) - \phi(t_1) \mod 2\pi$ (because the slope is $1$), so $\phi(t_2) - \phi(t_1) = \frac{\pi}{2} \mod \pi$, so $\cos \phi (t_2) \cos \phi(t_1) = - \sin \phi(t_2) \sin \phi(t_1)$. By taking the squares of both sides of this equality we obtain: \begin{equation} \label{eqthetaphi} \cos^2 \phi(t_1) + \cos^2 \phi(t_2) = 1. \end{equation}
We have $- \cos \theta(t_1) + \cos \phi(t_1) \leq 2r$ and $\cos \phi(t_2) + \cos \theta(t_2) \leq 2l - 4$, which implies that $- \cos \theta(t_1) = \cos \phi(t_1) \leq r$ and $\cos \theta(t_2) = \cos \phi(t_2) \leq l-2$. Injecting this in (\ref{eqthetaphi}), we obtain:
\[ r^2 + (l-2)^2 \geq 1, \]
which contradicts $r^2 + (l-2)^2 < 1$.
\end{proof}

\section{Acknowledgements}
I would like to thank Jos Leys for realizing Figures~\ref{figPhysical}, \ref{figConfSpace} and~\ref{figBilliardGeodesics}, as well as a video which is available on my website.

\bibliographystyle{alpha}
\bibliography{anosov-linkages}

\end{document}